\DeclareMathOperator{\Arg}{Arg}
\DeclareMathOperator{\dist}{dist}
\DeclareMathOperator{\length}{length}
\DeclareMathOperator{\sign}{sgn}
\DeclareMathOperator{\Int}{Int}
\DeclareMathOperator{\AC}{\mathit{AC}}
\DeclareMathOperator{\Ell}{\mathcal E}
\DeclareMathOperator{\SE}{\mathcal S\mathcal E}
\DeclareMathOperator{\NSE}{\mathcal N\kern-1.5pt\mathcal S\mathcal E}
\newtheorem{theorem}{Theorem}
\newtheorem{lemma}{Lemma}
\newtheorem{corollary}{Corollary}
\newtheorem{proposition}{Proposition}
\newtheorem{definition}{Definition}
\newtheorem{conjecture}{Conjecture}
\newtheorem{problem}{Problem}
\newtheorem{atheorem}{Theorem}
\theoremstyle{remark}
\newtheorem{remark}{Remark}
\begin{document}

\title[On Dirichlet problem for second-order elliptic equations]{On Dirichlet
problem for second-order elliptic equations in the plane and uniform
approximation problems for solutions of such equations}

\author[A.~Bagapsh, K.~Fedorovskiy, M.~Mazalov]{Astamur Bagapsh,
Konstantin Fedorovskiy, Maksim Mazalov}

\begin{abstract}
We consider the Dirichlet problem for solutions to general second-order
homogeneous elliptic equations with constant complex coefficients. We prove
that any Jordan domain with $C^{1,\alpha}$-smooth boundary, $0<\alpha<1$, is
not regular with respect to the Dirichlet problem for any not strongly
elliptic equation $\mathcal Lf=0$ of this kind, which means that for any such
domain $G$ it always exists a continuous function on the boundary of $G$ that
can not be continuously extended to the domain under consideration to a
function satisfying the equation $\mathcal Lf=0$ therein. Since there exists
a Jordan domain with Lipschitz boundary that is regular with respect to the
Dirichlet problem for bianalytic functions, this result is near to be sharp.
We also consider several connections between Dirichlet problem for elliptic
equations under consideration and problems on uniform approximation by
polynomial solutions of such equations.
\end{abstract}

\thanks{The work presented in Sections~1--3 was partially supported by the
Program of the President of the Russian Federation for support young Russian
scientist (grant no.~MK-1204.2020.1), by the Ministry of Science and Higher
Education of the Russian Federation (theme No.~0705-2020-0047), and by the
Theoretical Physics and Mathematics Advancement Foundation ``BASIS''.\\
\indent The results of Section~5 were obtained within the frameworks of the
project 17-11-01064 by the Russian Science Foundation.}

\address{
\hskip -\parindent Astamur Bagapsh${}^{1,2,3}$:
\newline \indent ${}^1$)~Bauman Moscow State Technical University, Moscow 105005, Russia;
\newline \indent ${}^2$)~Federal Research Center `Computer Science and Control
\newline \indent \quad of the Russian Academy of Sciences, Moscow 119333, Russia;
\newline \indent ${}^3$)~Moscow Center for Fundamental and Applied Mathematics,
\newline \indent \quad Lomonosov Moscow State University, Moscow 119991, Russia.
\newline \indent {\tt a.bagapsh@gmail.com}
\smallskip
\newline \noindent Konstantin Fedorovskiy${}^{1,2,3}$:
\newline \indent ${}^1$)~Faculty of Mechanics and Mathematics \&
\newline \indent \quad Moscow Center for Fundamental and Applied Mathematics,
\newline \indent \quad Lomonosov Moscow State University, Moscow 119991, Russia;
\newline \indent ${}^2$)~Saint Petersburg State University, St.~Petersburg 199034, Russia;
\newline \indent ${}^3$)~Bauman Moscow State Technical University, Moscow 105005, Russia.
\newline \indent {\tt kfedorovs@yandex.ru}
\smallskip
\newline \noindent Maksim Mazalov${}^{1,2}$:
\newline \indent ${}^1$)~Smolensk Branch of the Moscow Power Engineering Institute,
\newline \indent \quad Smolensk 214013, Russia;
\newline \indent ${}^2$)~Saint Petersburg State University, St.~Petersburg 199034, Russia.
\newline \indent {\tt maksimmazalov@yandex.ru}
}

\maketitle

\section{Introduction and description of main result}
\label{s:intro}

Let $\mathcal L$ be a second-order elliptic homogeneous partial differential
operator in the complex plane $\mathbb C$ with constant \emph{complex}
coefficients. That is
\begin{equation}\label{eq-ellop}
\mathcal Lf=c_{11}\frac{\partial^2f}{\partial x^2}+
2c_{12}\frac{\partial^2f}{\partial x\partial y}+
c_{22}\frac{\partial^2f}{\partial y^2},
\end{equation}
where $c_{11},c_{12},c_{22}\in\mathbb C$. Throughout this paper, $z$ will
mean both a complex number $x+iy$ and the eponymous point $(x,y)$ in the
$2$-dimensional plane. As usual, $\overline{z}=x-iy$ stands for the complex
conjugate to $z$.

Recall that the ellipticity of $\mathcal L$ means that the expression
$c_{11}\xi_1^2+2c_{12}\xi_1\xi_2+c_{22}\xi_2^2$ (the \emph{symbol} of
$\mathcal L$) does not vanish for real $\xi_1$ and $\xi_2$ unless
$\xi_1=\xi_2=0$. It may be readily verified that the ellipticity of $\mathcal
L$ is equivalent to the property that both roots $\lambda_1$ and $\lambda_2$
of the corresponding \emph{characteristic} equation
$c_{11}\lambda^2+2c_{12}\lambda+c_{22}=0$ are not real. We denote by $\Ell$
the class of all elliptic operators of the form \eqref{eq-ellop}.

Let $U\subset\mathbb C$ be an open set. A complex-valued function $f$ is
called $\mathcal L$-analytic on $U$, if it is defined on $U$ and satisfies
there the equation
\begin{equation}\label{eq-elleq}
\mathcal Lf=0,
\end{equation}
which is treated in the classical sense. Denote by $\mathcal O(U,\mathcal L)$
the class of all $\mathcal L$-analytic functions on $U$. One ought to recall
that any continuous function $f$ on $U$ satisfying the
equation~\eqref{eq-elleq} in the sense of distributions is real-analytic in
$U$ and satisfies this equation in $U$ in the classical sense (see, for
instance, \cite{Tre1961book}, Theorem~18.1).

Let us highlight two most typical examples of operators under consideration.
Put for brevity $\partial_x=\partial\big/\partial x$ and
$\partial_y=\partial\big/\partial y$. The first example is the Laplace
operator $\Delta= (\partial_x)^2+(\partial_y)^2$. In this case $\lambda_1=i$
and $\lambda_2=-i$. The class $\mathcal O(U,\Delta)$ consists of all
(complex-valued) \emph{harmonic} functions in $U$, and every function
$f\in\mathcal O(U,\Delta)$ has the form $f(z)=h(z)+g(\overline{z})$, where
$h$ and $g$ are holomorphic functions in $U$ and $\{\overline{z}\colon z\in
U\}$, respectively.

The second example is the operator $\overline\partial^2$, where
$\overline\partial=\partial_{\overline{z}}=\frac12\big(\partial_x+i\partial_y\big)$
is the standard Cauchy--Riemann operator. The operator $\overline\partial^2$
is often called the Bitsadze operator. For $\mathcal L=\overline\partial^2$
we have $\lambda_1=\lambda_2=-i$. The functions $f\in\mathcal
O(U,\overline\partial^2)$ are called \emph{bianalytic} functions (in $U$),
and it is clear that every such function has the form
$f(z)=\overline{z}h(z)+g(z)$, where $h$ and $g$ are holomorphic functions in
$U$.

The possibility to express a given $\mathcal L$-analytic function by means of
a pair of holomorphic functions, noted in both examples given, remains true
for general $\mathcal L\in\Ell$, and this circumstance is one of keystones
for our further considerations and constructions.

In what follows $C(X)$ will stand for the space of all bounded and continuous
complex-valued functions on a closed set $X\subset\mathbb C$.

In this paper we are interested in the problem to find conditions for a
bounded domain $G$ in $\mathbb C$, which ensure that every function of class
$C(\partial G)$ can be continuously extended to a function being continuous
on $\overline{G}$ and $\mathcal L$-analytic in $G$. In other words, we are
dealing with the question whether a given domain $G$ is regular with respect
to the Dirichlet problem for $\mathcal L$-analytic functions which we will
call $\mathcal L$-Dirichlet problem for brevity.

This problem is by no means the only question associated with the Dirichlet
problem for the equation \eqref{eq-elleq}. There is a number of works that
deal with this problem in various classes of functions (for instance, in
$L^p$-spaces, in Sobolev spaces, etc.). Moreover, a plenty of works deal with
Dirichlet problem for elliptic equations and systems of equations with
varying coefficients of certain classes. In spite of the significant interest
and importance of these problems and results obtained we will not touch them
here. In our studies of $\mathcal L$-Dirichlet problem we are motivated by
problems on uniform approximation by $\mathcal L$-analytic polynomials, that
is by polynomial solutions of the equation \eqref{eq-elleq}. In this context
we need exactly the description of $\mathcal L$-regular domains. We turn now
to the exact formulation of the problem in question.

\begin{definition}
A bounded domain $G\subset\mathbb C$ is called regular with respect to the
$\mathcal L$-Dirichlet problem or, shortly, $\mathcal L$-regular, if for
every function $\psi\in C(\partial G)$ there exists a function $f_\psi\in
C(\overline{G})\cap\mathcal O(G,\mathcal L)$ such that $f_\psi|_{\partial
G}=\psi$.
\end{definition}

\begin{problem}\label{prob-Dirichlet}
Given $\mathcal L\in\Ell$, to find necessary and sufficient conditions for a
bounded domain $G\subset\mathbb C$ to be $\mathcal L$-regular.
\end{problem}

It turns out that Problem~\ref{prob-Dirichlet} differs significantly in the
following two mutually complementary cases: in the case when $\mathcal L$ is
strongly elliptic, and in the opposite one. Let us recall how these classes
of elliptic operators are defined.

\begin{definition}
An operator $\mathcal L$ of the type \eqref{eq-ellop} is \emph{strongly
elliptic}, if its characteristic roots $\lambda_1$ and $\lambda_2$ belong to
different half-planes with respect to the real line, that is, if
$\sign\Im\lambda_1\neq\sign\Im\lambda_2$. We denote by $\SE$ the class of all
strongly elliptic operators of the form \eqref{eq-ellop}; thus
$\SE=\{\mathcal L\in\Ell\colon \sign\Im\lambda_1\neq\sign\Im\lambda_2\}$, and
we put $\NSE=\Ell\setminus\SE$.
\end{definition}

Formally this definition of strong ellipticity differs from the classical one
due to Vishik \cite{Vis1951sbm}, but it can be derived from it in the case
under consideration.

For operators $\mathcal L\in\SE$ several results are obtained about $\mathcal
L$-regularity under certain restrictions on $\mathcal L$ and on the class of
domains under consideration. First of all, one ought to state the famous
result due to A.~Lebesgue \cite{Leb1907rcmp}, which sounds as follows:
\begin{atheorem}\label{thm-lebesgue}
Let $G$ be an arbitrary bounded simply connected domain in $\mathbb C$. Then
$G$ is $\Delta$-regular \textup(i.e. regular with respect to the standard
Dirichlet problem for harmonic functions\textup).
\end{atheorem}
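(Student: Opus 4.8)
The plan is to realise the solution by the Perron method and then to verify that, for a bounded simply connected $G$, \emph{every} boundary point is regular; simple connectedness is used only at this last step, via the Riemann mapping theorem. First I would recall the Perron construction: given $\psi\in C(\partial G)$, let $H_\psi$ be the upper Perron solution, i.e.\ the pointwise supremum of all functions $u$ subharmonic in $G$ with $\limsup_{z\to\zeta}u(z)\le\psi(\zeta)$ for every $\zeta\in\partial G$. Using only boundedness of $G$ (Harnack's principle together with the Poisson modification, and the maximum principle to see the defining family is nonempty and bounded above), one gets that $H_\psi$ is harmonic in $G$ and $\min_{\partial G}\psi\le H_\psi\le\max_{\partial G}\psi$. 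Hence the claim that $G$ is $\Delta$-regular reduces to showing that $H_\psi$ extends continuously to $\overline G$ with $H_\psi|_{\partial G}=\psi$, which, replacing $\psi$ by $-\psi$ when needed, amounts to proving that every $\zeta\in\partial G$ is a \emph{regular} boundary point: $\lim_{z\to\zeta,\,z\in G}H_\psi(z)=\psi(\zeta)$ for all continuous data.

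The second ingredient is the classical criterion for regularity of a single point: $\zeta$ is regular provided there is a \emph{barrier} at $\zeta$, i.e.\ a positive superharmonic function $w$ on $G\cap B(\zeta,r)$ (some $r>0$) with $w(z)\to 0$ as $z\to\zeta$ and $\liminf_{z\to w_0}w(z)>0$ for each $w_0\in(\partial G\cap\overline{B(\zeta,r)})\setminus\{\zeta\}$; and, for a bounded domain carrying a Green function $g_G(\cdot,z_0)$ (automatic here, $\widehat{\mathbb C}\setminus G$ being a continuum with more than one point), this is equivalent to $g_G(z,z_0)\to 0$ as $z\to\zeta$. The passage from a barrier to regularity of $\zeta$ is the usual comparison argument: one tests the Perron competitors against $\psi(\zeta)+\varepsilon+Mw$ with $M$ large and $\varepsilon>0$ small.

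Now simple connectedness enters. Since $G$ is bounded and simply connected and $G\ne\mathbb C$, the Riemann mapping theorem gives a conformal bijection $T\colon G\to\mathbb D$; normalising $T(z_0)=0$, conformal invariance of the Green function and its explicit form on the disc yield $g_G(z,z_0)=\log\bigl(1/|T(z)|\bigr)$. It is a general fact that $|T(z)|\to 1$ as $z\to\partial G$: if $z_n\to\partial G$ but $|T(z_n)|\not\to 1$, a subsequence of $T(z_n)$ would converge to some $w_0\in\mathbb D$, whence $z_n=T^{-1}(T(z_n))\to T^{-1}(w_0)\in G$, a contradiction. Thus $g_G(z,z_0)\to 0$ at \emph{every} $\zeta\in\partial G$, every boundary point of $G$ is regular, $H_\psi$ extends to a function in $C(\overline G)\cap\mathcal O(G,\Delta)$ with boundary values $\psi$, and $G$ is $\Delta$-regular. (Alternatively one may build the barrier at $\zeta$ by hand: as $\widehat{\mathbb C}\setminus G$ is a continuum joining $\zeta$ to $\infty$, a boundary-bumping argument shows it contains, for small $r$, a nondegenerate sub-continuum $E\ni\zeta$ meeting $\partial B(\zeta,r)$; on the simply connected domain $B(\zeta,r)\setminus E$, which avoids $\zeta$, one takes a holomorphic branch of $(z-\zeta)^{1/2}$, whose range lies in a half-plane with $0$ on its boundary, and converts it into the required barrier.)

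The main obstacle is the classical potential-theoretic content packaged in the first two paragraphs — the Perron existence theorem and, above all, the barrier criterion asserting that a barrier at $\zeta$ forces $\lim_{z\to\zeta}H_\psi(z)=\psi(\zeta)$. The conformal computation $g_G=\log(1/|T|)$ and the boundary behaviour $|T|\to 1$ are routine, but some care is needed to check that the function produced from $T$ (or from $(z-\zeta)^{1/2}$) genuinely satisfies the definition of a barrier on all of $\partial G\cap\overline{B(\zeta,r)}$ and not merely at the single point $\zeta$.
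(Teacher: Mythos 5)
The paper does not prove Theorem~\ref{thm-lebesgue} at all: it is quoted as Lebesgue's classical result with a reference to \cite{Leb1907rcmp}, so there is no internal argument to compare with. Your route --- Perron's method, the barrier criterion for regularity of a single boundary point, and the verification that every $\zeta\in\partial G$ is regular because $g_G(z,z_0)=\log\bigl(1/|T(z)|\bigr)\to0$ as $z\to\partial G$ for the Riemann map $T\colon G\to\mathbb D$ --- is the standard modern proof, and it is correct modulo the classical potential-theoretic facts you explicitly package (Perron existence, and the equivalence of regularity with the vanishing of the Green function).

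Two caveats, both at points you partly flag yourself. First, the direction of that equivalence you actually use (``$g_G(\cdot,z_0)\to0$ at $\zeta$ implies $\zeta$ regular'') is not the elementary comparison with a barrier in the sense you define: $g_G(\cdot,z_0)$, equivalently $\log(1/|T|)$, is only a \emph{weak} barrier --- by the very theorem being proved it tends to $0$ at \emph{every} boundary point, so it certainly does not satisfy your requirement $\liminf_{z\to w_0}w(z)>0$ at the other points $w_0\in\partial G\cap\overline{B(\zeta,r)}$. Passing from a weak barrier to regularity is Bouligand's lemma, and that is where the real work hides; your sketch should either invoke it by name or replace the strong-barrier comparison by it. Second, in the alternative construction the assertion that a branch of $(z-\zeta)^{1/2}$ on a component of $B(\zeta,r)\setminus E$ has range in a half-plane through $0$ is false in general: if $E$ spirals, the argument of $z-\zeta$ on that component can oscillate by much more than $2\pi$, and halving it does not confine the image to a half-plane. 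The classical fixes are either the weak barrier $-\mathrm{Re}\bigl(1/\log\frac{z-\zeta}{2r}\bigr)$ combined again with Bouligand's lemma, or a harmonic-measure estimate; since your main line through the Riemann map already covers all boundary points, the safest course is simply to drop the parenthetical square-root construction.
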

This result is one of keystones, underlying the proof of the celebrated
Walsh--Lebesgue criterion for uniform approximation by harmonic polynomials
on compact sets in the complex plane. A brief account concerning the
corresponding topic in Approximation Theory and the role of
Problem~\ref{prob-Dirichlet} in this themes, will be presented in the final
section of this paper.

It is a clear that the result similar to Theorem~\ref{thm-lebesgue} also
takes place for every $\mathcal L\in\SE$ possessing the property
$\lambda_1=\overline\lambda_2$ (such operators $\mathcal L$ are exactly the
operators with real coefficients, up to a common complex multiplier).

To the best of our knowledge, the conditions of $\mathcal L$-regularity of
domains for general $\mathcal L\in\SE$ were obtained only under additional
fairly stringent constraints on the properties of $G$. For instance, the
following result was proved in \cite{VerVog1997tams}, Theorem~7.4:
\begin{atheorem}\label{thm-vervog}
Let $G$ be a Lipschitz domain whose boundary consists of a finite number of
$C^1$-curves. Then $G$ is $\mathcal L$-regular for any $\mathcal L\in\SE$.
\end{atheorem}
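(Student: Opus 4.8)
The plan is to solve the $\mathcal L$-Dirichlet problem for such a $G$ by the method of boundary layer potentials (the tool underlying Theorem~\ref{thm-lebesgue} in the harmonic case). Since $\mathcal L=c_{11}(\partial_x-\lambda_1\partial_y)(\partial_x-\lambda_2\partial_y)$ and, for $\mathcal L\in\SE$, the characteristic roots $\lambda_1,\lambda_2$ are distinct and non-real, $\mathcal L$ possesses the explicit fundamental solution $\Phi(x,y)=c_1\log(\lambda_1x+y)+c_2\log(\lambda_2x+y)$ with constants $c_1,c_2$ determined by $c_{11},\lambda_1,\lambda_2$ (note that $\lambda_jx+y$ vanishes, for real $x,y$, only at the origin). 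Using $\Phi$ I would form the double layer potential $\mathcal D\mu(z)=\int_{\partial G}\partial^{\mathcal L}_{\nu(\zeta)}\Phi(z-\zeta)\,\mu(\zeta)\,|d\zeta|$ of a density $\mu\in C(\partial G)$, where $\partial^{\mathcal L}_{\nu}$ is the conormal derivative attached to the divergence form of $\mathcal L$. Then $\mathcal D\mu\in\mathcal O(G,\mathcal L)$ automatically, and by the classical jump relations the boundary trace of $\mathcal D\mu$ equals $\bigl(\tfrac12 I+K\bigr)\mu$, where $K$ is the principal-value restriction of $\mathcal D$ to $\partial G$. Everything thus reduces to showing that $\tfrac12 I+K$ is invertible on $C(\partial G)$, after which $f_\psi=\mathcal D\bigl((\tfrac12 I+K)^{-1}\psi\bigr)$ is the sought solution. (Equivalently, the reduction can be carried out through the representation $f=\phi_1+\phi_2$ of an $\mathcal L$-analytic function by two holomorphic functions noted in the introduction, which turns the problem into a coupled boundary value problem for the pair $(\phi_1,\phi_2)$ and leads to the same singular integral operators.)

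Next I would establish the mapping properties of $K$. Along each of the finitely many $C^1$-arcs composing $\partial G$ the kernel of $K$ is of Cauchy type with a continuous (hence vanishing-mean-oscillation) tangent field, so by the $L^2$-boundedness of the Cauchy integral on Lipschitz curves (Coifman--McIntosh--Meyer) the ``smooth part'' of $K$ is not merely bounded but compact, on $L^p(\partial G)$ and on $C(\partial G)$. At each corner $p_k$, where two arcs meet at an angle $\theta_k\in(0,2\pi)$ (the range forced by the Lipschitz hypothesis), $K$ carries in addition a non-compact piece which, after localisation and a Mellin transform, becomes multiplication by a matrix-valued symbol $m_{\theta_k}(s)$ depending only on $\theta_k$ and on $\lambda_1,\lambda_2$. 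The decisive point is that for $\mathcal L\in\SE$ the symbol $m_{\theta_k}(s)$ is invertible at every $s$ lying on the line corresponding to continuous (and to $L^p$) boundary data; consequently $\tfrac12 I+K$ is a Fredholm operator of index zero on the relevant space of functions over $\partial G$.

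It then suffices to prove injectivity of $\tfrac12 I+K$, i.e. uniqueness for the homogeneous $\mathcal L$-Dirichlet problem. Here strong ellipticity is used essentially: for $\mathcal L\in\SE$ the sesquilinear form associated with $\mathcal L$ obeys a G{\aa}rding inequality, which yields the energy estimate and hence uniqueness of solutions with $H^{1/2}(\partial G)$ boundary data on any Lipschitz domain; combining this with the $C^1$-smoothness of the arcs and a routine approximation argument rules out a nontrivial nullspace. Invertibility of $\tfrac12 I+K$ follows, and the solution operator $\psi\mapsto f_\psi=\mathcal D\bigl((\tfrac12 I+K)^{-1}\psi\bigr)$ maps $C(\partial G)$ into $C(\overline G)\cap\mathcal O(G,\mathcal L)$; continuity of $f_\psi$ up to $\overline G$ is checked arc-by-arc from the $C^1$ regularity (a Dini/Lyapunov estimate for double layer potentials) and from the mild, Lipschitz nature of the finitely many corners.

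The step I expect to be the main obstacle is the corner analysis. Because $\partial G$ is only piecewise $C^1$, the operator $K$ is genuinely non-compact, so the smooth-domain Fredholm argument does not apply verbatim; one must verify that the Mellin symbol $m_{\theta_k}(s)$ produced by a corner of opening $\theta_k$ has no zeros on the critical line, and that this holds for \emph{every} $\mathcal L\in\SE$ and for every admissible angle. This is precisely the place where the hypothesis $\mathcal L\in\SE$ cannot be dropped: for $\mathcal L\in\NSE$ this symbol degenerates for suitable angles, which is exactly the mechanism responsible for the failure of regularity established in the present paper. A secondary technical nuisance is to run the whole scheme in the norm of $C(\partial G)$ rather than merely in $L^p(\partial G)$, which is what forces the arc-by-arc Dini-type estimates above.
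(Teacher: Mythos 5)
This statement is not proved in the paper at all: it is quoted from \cite{VerVog1997tams} (Theorem~7.4 there), and, as the authors note in Section~5, the proof in that reference hinges on a weak maximum modulus principle for $\mathcal L$ on the domains in question (\cite{VerVog1997tams}, Theorem~7.3), not on a direct inversion of a boundary integral operator on $C(\partial G)$. Your proposal is a programme in the same general territory (layer potentials on nonsmooth planar domains), but as written it has genuine gaps precisely at the points where strong ellipticity must do quantitative work. You assert, but do not prove, that the Mellin symbol produced by a corner of opening $\theta_k$ is invertible on the critical line for every $\mathcal L\in\SE$ and every $\theta_k\in(0,2\pi)$, and likewise the claim that $\tfrac12 I+K$ is Fredholm of index zero on $C(\partial G)$ is asserted without a homotopy or symbol calculus to back it; you yourself flag the corner analysis as the main obstacle, so the sketch stops exactly where the proof would have to begin.

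The uniqueness step is also not sound as stated. A G{\aa}rding inequality yields uniqueness of variational ($H^1$) solutions, but a function in $C(\overline G)\cap\mathcal O(G,\mathcal L)$ need not belong to the energy class, and for complex-coefficient operators there is no classical maximum principle to fall back on; promoting energy-class uniqueness to uniqueness of merely continuous solutions on a Lipschitz domain requires nontangential maximal function estimates or a (weak) maximum principle --- which is exactly the substance of the cited Theorem~7.3 --- so your ``routine approximation argument'' hides the actual content and risks circularity with the very layer-potential estimates you are constructing. Two further inaccuracies: your closing diagnosis that for $\mathcal L\in\NSE$ the corner symbol ``degenerates for suitable angles, which is exactly the mechanism responsible for the failure of regularity established in the present paper'' contradicts Theorem~\ref{thm-dirprob}, which shows non-regularity for $\mathcal L\in\NSE$ already on Jordan domains with $C^{1,\alpha}$ boundary, i.e.\ with no corners at all, so the obstruction in the non-strongly-elliptic case is of a different nature; and each branch of $\log(\lambda_j x+y)$ is multivalued (the linear form winds once around the origin), so the single-valuedness of your fundamental solution requires a compatibility between $c_1,c_2$ and the opposite signs of $\Im\lambda_1,\Im\lambda_2$ --- fixable, but it should be checked rather than assumed.
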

Without going into further details, we note that all known results about
$\mathcal L$-regularity of bounded simply connected domains in $\mathbb C$ in
the case of general operators $\mathcal L\in\SE$ are quite far from to cover
even the case of general Jordan domains.

In the case of operators which are not strongly elliptic,
Problem~\ref{prob-Dirichlet} remains quite poorly studied. The almost only
considered case is the one where $\mathcal L=\overline\partial^2$ (the square
of the Cauchy--Riemann operator). The $\overline\partial^2$-Dirichlet problem
was studied in several works, see, for instance, \cite{CarParFed2002sbm} and
\cite{Maz2009sbm}. The following results were obtained in \cite{Maz2009sbm},
Theorem~1 and Example~2:
\begin{atheorem}\label{thm-mazalov}\hfill\par
1.~Let $G$ be a Jordan domain with rectifiable boundary in $\mathbb C$, and
let $\varphi$ be some conformal mapping from $\mathbb D$ onto $G$. If
$\displaystyle\int_{\mathbb D}|\varphi''(z)|\,dxdy<\infty$, then $G$ is not
$\overline\partial^2$-regular.

\smallskip
2.~There exists a Jordan domain with Lipschitz boundary which is
$\overline\partial^2$-regular.
\end{atheorem}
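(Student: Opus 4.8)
We use only that $f\in\mathcal O(U,\overline\partial^2)$ exactly when $f(w)=\overline w\,h(w)+g(w)$ with $h,g$ holomorphic in $U$.

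\emph{Part 1.} The plan is first to reduce to the disk. As $G$ is Jordan, $\varphi$ extends (Carath\'eodory) to a homeomorphism $\overline{\mathbb D}\to\overline G$, and we would begin by checking, via integrating $\varphi''$ along radii and using the subharmonicity of $|\varphi''|$, that $\int_{\mathbb D}|\varphi''|\,dxdy<\infty$ forces $\varphi'\in H^1(\mathbb D)$; hence $\partial G$ is rectifiable and Hardy/Smirnov-class theory on $G$ is available. Transplanting by $\varphi$, a function $f\in C(\overline G)\cap\mathcal O(G,\overline\partial^2)$ becomes $F=f\circ\varphi\in C(\overline{\mathbb D})$ with $F=\overline\varphi\,H+\widetilde G$ in $\mathbb D$, $H,\widetilde G$ holomorphic; since $\overline\partial\,\overline\varphi=\overline{\varphi'}$ is nonvanishing in $\mathbb D$, applying $\overline\partial$ gives $H=\overline\partial F/\overline{\varphi'}$, so the admissible $F$'s are exactly those $F\in C(\overline{\mathbb D})$ with $\overline\partial F/\overline{\varphi'}$ holomorphic in $\mathbb D$, i.e.\ with $\overline\partial^2F=(\overline{\varphi''}/\overline{\varphi'})\,\overline\partial F$ in $\mathbb D$. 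So $G$ is $\overline\partial^2$-regular iff every $\psi\in C(\partial\mathbb D)$ extends to such an $F$, and we aim to exhibit one $\psi$ that does not.

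For affine $\varphi$ ($G$ a disk) the equation is merely $\overline\partial^2F=0$; the boundary traces of bianalytic functions on $\mathbb D$ have Fourier spectrum in $\{n\ge-1\}$, so $\psi(\zeta)=\overline\zeta^{\,2}$ is not extendable and a disk is not $\overline\partial^2$-regular. The core of Part 1 is to show that this codimension obstruction at the frequencies $\le-2$ \emph{persists} under perturbation, the correction $(\overline{\varphi''}/\overline{\varphi'})\,\overline\partial F$ being ``small'' precisely in the sense carried by $\int_{\mathbb D}|\varphi''|\,dxdy<\infty$. One route is a duality argument: we would look for a nonzero measure $\nu$ on $\partial G$ annihilating $C(\overline G)\cap\mathcal O(G,\overline\partial^2)$, which makes the trace space proper. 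Testing against $\overline w\,p(w)+q(w)$ for polynomials $p,q$ (these lie in the class, $\overline w$ being continuous on $\overline G$) and invoking Mergelyan's theorem forces $\nu\perp A(\overline G)$ and $\overline w\,\nu\perp A(\overline G)$ ($A(\overline G)$ being the functions continuous on $\overline G$ and holomorphic in $G$); by the F.\ and M.\ Riesz theorem on the rectifiable curve $\partial G$ this means $d\nu=F_0(w)\,dw$ with $F_0$ and $\overline w\,F_0$ both in $E^1(G)$, and one would then produce such an $F_0$ explicitly out of $\varphi'$. Alternatively one builds a Cauchy-kernel parametrix for the equation and checks that under the hypothesis the $\overline{\varphi''}$-term is a bounded, in fact compact, perturbation, so the solution operator keeps the nontrivial cokernel of the disk case. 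Either way the hard part will be to rule out that some \emph{singular} choice of the holomorphic components $h,g$ of a bianalytic function continuous on $\overline G$ extends the offending $\psi$ --- such singular bianalytic functions exist already on a disk --- and it is exactly here that $\int_{\mathbb D}|\varphi''|\,dxdy<\infty$ is consumed, to carry a Cauchy-type integration in $G$ over products of $h$ with $\varphi'$ and $\varphi''$ through to completion.

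\emph{Part 2.} For the converse existence statement the plan is to exhibit one domain. We would take $G$ with Lipschitz boundary whose conformal map $\varphi$ is so irregular that the correction term above is no longer negligible --- concretely, a rectifiable Lipschitz domain of Keldysh--Lavrentiev (non-Smirnov) type for which $(\varphi-\varphi(0))/z$ has no pseudocontinuation across $\partial\mathbb D$. Then the conditions on $\nu$ from Part 1 admit only $\nu=0$, so there is no nonzero measure on $\partial G$ annihilating $C(\overline G)\cap\mathcal O(G,\overline\partial^2)$ and the trace space is dense in $C(\partial G)$. The remaining --- and main --- step would be to upgrade density to equality, either by proving the trace space closed via an a priori estimate extracted from the duality, or by solving the $\overline\partial^2$-Dirichlet problem on this particular $G$ directly by a Cauchy-integral construction; this yields the $\overline\partial^2$-regularity of $G$.
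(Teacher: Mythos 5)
Your text is a plan rather than a proof, and in both parts the step you defer is precisely where all the work lies. In Part 1 the reduction to the disk and the disk case itself are fine (for the disk one can even avoid the Fourier-spectrum claim: Green's formula on $|z|=r<1$ gives $\int_{|z|=r}fz\,dz=0$ exactly, since $\overline\partial(fz)=zh$ with $h$ holomorphic, and letting $r\to1$ the measure $z\,dz$ annihilates all traces, while $\int_{\mathbb T}\overline z^2\,z\,dz\neq0$). But for a general $G$ the transplanted $\overline\partial F=\overline{\varphi'}\,(h\circ\varphi)$ is no longer holomorphic, one integration by parts does not close the argument, and you explicitly leave open both the production of the annihilating density ``out of $\varphi'$'' and the control of bianalytic functions whose components $h,g$ are individually singular. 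That is exactly what the quantitative hypothesis $\int_{\mathbb D}|\varphi''|\,dm_2<\infty$ must be converted into, and what the paper's scheme (mirrored from the proof of Theorem~\ref{thm-dirprob} and from \cite{Maz2009sbm}) actually does: one estimates the functionals $F\mapsto\int_{\mathbb T}\varPsi_\varepsilon(z)F(z)z^n\,dz$ on the trace space by iterating Green's formula (Lemma~\ref{lem-green}) and using boundary estimates for derivatives of $h$ and $g$ of the type of Lemma~\ref{lem-solest}, obtaining decay in $n$ uniformly in $f$; this yields not merely non-density but that the trace space is of the first Baire category. Neither of your two suggested routes (explicit $F_0$, or ``compact perturbation of the disk cokernel'') is substantiated, so Part 1 is not proved.

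Part 2 contains an outright error: there is no ``Lipschitz domain of Keldysh--Lavrentiev (non-Smirnov) type''. For a Lipschitz (indeed any chord-arc) domain $\arg\varphi'$ is bounded, hence $\log\varphi'\in H^p$ for all finite $p$ and $\varphi'$ is outer, so every Lipschitz domain is a Smirnov domain; the Keldysh--Lavrentiev curves are rectifiable but never Lipschitz. Moreover, even granting a domain on which your necessary conditions force $\nu=0$, that only gives density of the trace space in $C(\partial G)$, and you yourself note that upgrading density to solvability of the $\overline\partial^2$-Dirichlet problem for every boundary datum is the main step --- you give no mechanism for it. The actual construction (outlined in Section~\ref{s:example}, following \cite{Maz2009sbm}) is of a very different nature: a lacunary series $\psi(z)=z+\sum z^{m_k}/(km_k)$ forcing $\int_{\mathbb D}|\varphi''|\,dm_2=\infty$, the Privalov ice-cream-cone construction producing a Lipschitz kernel domain $\Omega$ on which a perturbation of $\psi$ becomes univalent, Proposition~\ref{pro-s:emx-2} supplying bounded ``bianalytic-type'' extensions of arbitrary boundary data, and a regularity criterion (Proposition~\ref{pro-s:emx-3}) based on the Rudin--Carleson interpolation theorem to pass from approximate interpolation on large closed subsets of $\partial G$ to genuine solvability. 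So both halves of your argument have genuine gaps, and the specific example proposed for Part 2 cannot exist.
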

It follows from this theorem, that Jordan domains with at least
$C^{1,\alpha}$-smooth boundaries, $0<\alpha<1$, are not
$\overline\partial^2$-regular; the exact definition of this class of domains
is given in Section~\ref{s:aux} below. Thus the situation in
Problem~\ref{prob-Dirichlet} for operators that are not strongly elliptic
looks ``turned upside down'' with respect to the strongly elliptic case:
domains with sufficiently smooth boundaries can not be regular, but some
special domains (having not too smooth boundaries) may have such behavior.

Problem~\ref{prob-Dirichlet} in the case of general $\mathcal L\in\NSE$ was
touched upon in \cite{Zai2006psim}, where it was proved that any Jordan
domain whose boundary contains some analytic arc is not $\mathcal L$-regular
for any $\mathcal L\in\NSE$ (see \cite{Zai2006psim}, Proposition~1).

In the present paper we consider Problem~\ref{prob-Dirichlet} for general
operators $\mathcal L\in\NSE$. Our main result --- Theorem~\ref{thm-dirprob}
stated in Section~\ref{s:aux} below --- asserts that \emph{Jordan domains
with $C^{1,\alpha}$-smooth boundary, $0<\alpha<1$, are not $\mathcal
L$-regular for such operators}. It is not clear at the moment whether this
result is sharp; but the part~2 of Theorem~\ref{thm-mazalov} shows that it is
``near to be sharp''. Although the example of a Jordan domain with the
boundary that is less regular than $C^{1,\alpha}$-smooth, which is however
$\mathcal L$-regular for some $\mathcal L\in\NSE$, is known only for
$\mathcal L=\overline\partial^2$, the general situation when domains with
sufficiently regular (smooth) boundaries are not $\mathcal L$-regular, while
domains having less regular boundaries may be $\mathcal L$-regular is rather
unexpected and essentially new. We also consider the problem on uniform
approximation by $\mathcal L$-analytic polynomials and its relations with
$\mathcal L$-Dirichlet problem and with weak maximum modulus principle for
$\mathcal L$-analytic functions.

The structure of the paper is as follows. In Section~\ref{s:aux} we present
the necessary background information. Also we formulate in this section one
result of a technical nature that underlies our proof of the main result.
Firstly we present this result in a somewhat informal form (see the estimate
\eqref{eq-Mn-estonsol}) and show how Theorem~\ref{thm-dirprob} can be derived
from it, and later on we provide an accurate formulation of this result, see
Theorem~\ref{thm-mainest}. The proof of Theorem~\ref{thm-mainest} is given in
Section~\ref{s:proof}. In Section~\ref{s:example} we give a schematic outline
of the construction given in \cite{Maz2009sbm} to verify the second statement
of Theorem~\ref{thm-mazalov}.

Finally, in Section~\ref{s:approx} we consider the problem about
approximation by $\mathcal L$-analytic polynomials and its connections with
Problem~\ref{prob-Dirichlet}. We present the new proof of the criterion for
uniform approximability of functions by $\mathcal L$-analytic polynomials on
boundaries of Carath\'eodory domains, see Theorem~\ref{thm-tz}. This result
was firstly obtained in \cite{Zai2002mn}, but the proof given there is rather
involved technically and, moreover, it is not enough complete in a certain
place. As a consequence of Theorem~\ref{thm-tz} one can show that weak
maximum modulus principle (i.e. a maximum modulus principle with a constant
depending on the domain under consideration) is certainly failed for any
$\mathcal L\in\NSE$.

Through the paper we will use the following common notations. For a given
closed set $X\subset\mathbb C$ the space $C(X)$ will be endowed with the
standard uniform norm $\|f\|_X=\sup_{z\in X}|f(z)|$. When $X=\mathbb C$ we
will write $\|f\|$ instead of $\|f\|_{\mathbb C}$. We will denote by $\mathbb
D$ and $\mathbb T$ the unit disk and the unit circle in $\mathbb C$, that is
$\mathbb D=\{z\colon |z|<1\}$ and $\mathbb T=\{z\colon |z|=1\}$. The symbol
$D(a,r)$ will stand for the open disk in $\mathbb C$ with center $a$ and
radius $r$, while $m_2(\cdot)$ will stand for the 2-dimensional Lebesgue
measure. Moreover, we will denote by $C,C_1,C_2,\ldots$ positive numbers
(constants) which are not necessarily the same in distinct formulae.


\section{Background and auxiliary results}
\label{s:aux}

\subsection*{Solutions to the equation \eqref{eq-elleq}}

Let $\mathcal L\in\Ell$. Let $\lambda_1,\lambda_2$ be the characteristic
roots of $\mathcal L$, that is $c_{11}\lambda_s^2+2c_{12}\lambda_s+c_{22}=0$,
$s=1,2$, and $\lambda_1,\lambda_2$ are not real. Then $\mathcal L$ may be
represented in the following form
\begin{equation}\label{eq-op1}
\mathcal L=\left\{\begin{array}{ll}
c_{11}\big(\partial_x-\lambda_1\partial_y\big)
\big(\partial_x-\lambda_2\partial_y\big),
& \text{if}\quad \lambda_1\neq\lambda_2,\\[1ex]
c_{11}\big(\partial_x-\lambda\partial_y\big)^2,
& \text{if}\quad \lambda_1=\lambda_2=\lambda.
\end{array}\right.
\end{equation}

Let $U$ be an open set in $\mathbb C$. Using \eqref{eq-op1} one can show
(see, for instance, \cite{ParFed1999sbm}, Proposition~2.1) that every
function $f\in\mathcal O(U,\mathcal L)$ may be expressed in terms of a pair
of holomorphic functions in the following form. When
$\lambda_1\neq\lambda_2$, the function $f$ has the form
\begin{equation}\label{eq-solrep1}
f(z)=f_1(T_{(1)}z)+f_2(T_{(2)}z),
\end{equation}
where $T_{(1)}z=x+\lambda_2^{-1}y$, $T_{(2)}z=x+\lambda_1^{-1}y$, and where
$f_1$ and $f_2$ are holomorphic functions in $\{T_{(1)}z\colon z\in U\}$ and
$\{T_{(2)}z\colon z\in U\}$, respectively. One ought to emphasize, that
$\lambda_1$ and $\lambda_2$ are not real. Next, if
$\lambda_1=\lambda_2=\lambda$, then $f$ has the form
\begin{equation}\label{eq-solrep2}
f(z)=(T_{(1)}z)f_1(T_{(2)}z)+f_0(T_{(2)}z),
\end{equation}
where $T_{(1)}z=x-\lambda^{-1}y$, $T_{(2)}z=x+\lambda^{-1}y$, and where $f_0$
and $f_1$ are holomorphic functions in $\{T_{(2)}z\colon z\in U\}$.

For example, if $\mathcal L=\Delta$, then $\lambda_1=i$, $\lambda_2=-i$, and
hence $T_{(1)}z=z$, $T_{(2)}z=\overline{z}$ and \eqref{eq-solrep1} is the
standard decomposition of a harmonic function onto sum of its holomorphic and
antiholomorphic parts. Similarly, for $\mathcal L=\overline\partial^2$ we
have $\lambda_1=\lambda_2=-i$, $T_{(1)}z=\overline{z}$, $T_{(2)}z=z$, and
\eqref{eq-solrep2} looks in this case as a polynomial on $\overline{z}$ of
degree $1$ with holomorphic coefficients, which is the standard form of a
generic bianalytic function.

In what follows we will work with slightly different representation of
$\mathcal L$ and, respectively, with different representation of $\mathcal
L$-analytic functions. It turns out that one can find a not degenerate
real-linear (that is linear over the reals) transformation of the plane that
reduces $\mathcal L$ to the form
\begin{equation}\label{eq-op2b}
\mathcal L_*=c\overline\partial\partial'_\beta,
\end{equation}
where $c\in\mathbb C$, $\beta\in\mathbb R$, $|\beta|\geqslant1$, and
$\partial'_\beta=\partial_x+i\beta\partial_y$. This representation was used
in several papers and it turned out to be quite useful (see, for instance,
\cite{Zai2004izv} and \cite{Zai2006psim}), but we need to modify it a bit
more to get a more simple notation system which allows one to distinguish
strongly elliptic and not strongly elliptic cases in a more clear way. Note
that $\beta\leqslant-1$ if and only if $\mathcal L\in\SE$, while
$\beta\geqslant1$ if and only if $\mathcal L\in\NSE$.

Given $\tau\in\mathbb C$ with $|\tau|<1$, we put
\begin{equation}\label{eq-dtau}
\partial_{\tau}=\overline\partial+\tau\partial,
\end{equation}
where $\partial=\partial_z=\frac12\big(\partial_x-i\partial_y\big)$.
Sometimes the operator $\partial$ is called the conjugate (or
antiholomorphic) Cauchy--Riemann operator.

Similarly to representation of $\mathcal L$ in the form \eqref{eq-op2b}, it
can be shown that $\mathcal L$ can be reduced by means of a suitable not
degenerate real-linear transformation of the plane to the form
\begin{equation}\label{eq-op2se}
\mathcal L_*=c\partial\partial_\tau,\quad
c=c(\mathcal L)\in\mathbb C,\quad \tau=\tau(\mathcal L)\in[0,1),
\end{equation}
when $\mathcal L\in\SE$, or
\begin{equation}\label{eq-op2nse}
\mathcal L_*=c\overline\partial\partial_\tau,\quad
c=c(\mathcal L)\in\mathbb C,\quad \tau=\tau(\mathcal L)\in[0,1),
\end{equation}
when $\mathcal L\in\NSE$, respectively. Observe, that
$\partial\partial_0=\partial\overline\partial=\frac14\Delta$, while
$\overline\partial\partial_0=\overline\partial^2$. Let us accent that
$0\leqslant\tau<1$ (i.e.$\tau$ is real) both in \eqref{eq-op2se} and in
\eqref{eq-op2nse}. In both these cases the characteristic root of
$\partial_\tau$ lies in the lower half-plane $\{\Im\lambda<0\}$, and the
operator $\partial_\tau$ itself is more ``close'' to $\overline\partial$ than
to $\partial$.

\begin{remark}\label{rem-solspace}
Let $\mathcal L$ be an arbitrary operator of the form \eqref{eq-ellop}, and
suppose $T_*$ to be the non degenerate real-linear transformation of the
plane that reduces $\mathcal L$ to the operator $\mathcal L_*$ of the form
\eqref{eq-op2b}, \eqref{eq-op2se} or \eqref{eq-op2nse}. It is not difficult
to show that $f\in\mathcal O(U,\mathcal L)$ if and only if $f\circ
T_*^{-1}\in\mathcal O(T_*U,\mathcal L_*)$, where $U$ is an open set in
$\mathbb C$.
\end{remark}

Therefore the question about $\mathcal L$-regularity of a given domain $G$ is
equivalent to the question about $\mathcal L_*$-regularity of the domain
$T_*G$. Bearing this in mind, we will always assume in what follows that the
operator under consideration is already given in the reduced form
\eqref{eq-op2se} or \eqref{eq-op2nse} with $c=1$. Let us clarify how the
solution representations \eqref{eq-solrep1} and \eqref{eq-solrep2} will look
in this case. Given $\tau$, $0\leqslant\tau<1$, and an open set
$U\subset\mathbb C$ we put
\begin{align}
\mathcal L_\tau&=\overline\partial\partial_\tau,\\
\mathcal L^\dag_\tau&=\partial\partial_\tau.
\end{align}
Therefore $\mathcal L_\tau\in\NSE$, while $\mathcal L^\dag_\tau\in\SE$.
Moreover, we put
$$
\mathcal O_\tau(U)=\mathcal O(U,\mathcal L_\tau),\qquad
\mathcal O^\dag_\tau(U)=\mathcal O(U,\mathcal L^\dag_\tau).
$$
Denote by $T$ the real-linear transformation of the plane defined by the
formula
$$
Tz=z_\tau,
$$
where
\begin{equation}\label{eq-ztau}
z_\tau=z-\tau\overline{z}.
\end{equation}
Since $\tau\in[0,1)$, then $T$ is a sense-preserving mapping (notice that the
Jacobian of $T$ is $1-\tau^2$). Moreover, it can be easily verified that
$\overline\partial z=0$, $\overline\partial z_\tau=-\tau$, $\partial_\tau
z=\tau$, $\partial_\tau z_\tau=0$.

It follows from \eqref{eq-solrep1} that any function $f\in\mathcal
O^\dag_\tau(U)$ has the form
\begin{equation}\label{eq-soltau-se}
f(z)=h(z_\tau)+g(\overline{z}),
\end{equation}
where $g$ and $h$ are holomorphic functions on $\{\overline{z}\colon z\in
U\}$ and $TU$, respectively. Next, if $\tau>0$, then any function
$f\in\mathcal O_\tau(U)$ has the form
\begin{equation}\label{eq-soltau-nse}
f(z)=h(z_\tau)+g(z),
\end{equation}
where $g$ and $h$ are holomorphic functions on $U$ and $TU$, respectively.
The remaining class $\mathcal O_0(U)=\mathcal O(U,\overline\partial^2)$
consists of bianalytic functions, and any function $f\in \mathcal O_0(U)$ has
the form $\overline{z}f_1(z)+f_0(z)$ where $f_0$ and $f_1$ are holomorphic
functions in $U$.

Dealing with the case of not strongly elliptic equations, we assume that
$\mathcal L=\mathcal L_\tau$ for some $\tau\in[0,1)$. As it was mentioned
above, the problem we are interested in was studied in this case mainly for
bianalytic functions, while the general case remained quite poorly studied.
Note that the space $\mathcal O_0(U)$ has an additional algebraic structure,
in contrast to the space $\mathcal O_\tau(U)$ for $\tau>0$. Indeed, $\mathcal
O_0(U)$ is a module over the space of holomorphic functions on $U$ generated
by the function $\overline{z}$. This circumstance is one plausible reason
that explains new significant difficulties for working with functions of
class $\mathcal O_\tau(U)$, because many ideas and constructions which are
useful for bianalytic functions do not work properly for functions from
$\mathcal O_\tau(U)$, $\tau>0$. One ought to emphasize also that the class
$\mathcal O_\tau(U)$, $\tau>0$, is neither conformally invariant nor, even,
M\"obius invariant. It also causes additional difficulties for working with
this class. Moreover, we need to make the following observation.

\begin{remark}\label{rem-change}
Let $a\in\mathbb C$, $a\neq0$, and $b\in\mathbb C$. It can be readily
verified that $\mathcal L_\tau f(az+b)=|a|^{-1}\mathcal L_{\tau'} f(z)$,
where $\tau'=\tau\overline{a}\big/a$ (recall, that we have allowed complex
values of $\tau$ in the initial definition of $\partial_\tau$). Therefore,
the equation $\mathcal L_\tau f=0$ is invariant under shifts and dilations of
the plane, but this equations is changed under rotations of the plane as
follows: the rotation of the plane to the angle $\alpha$ leads to the
rotation of the parameter $\tau$ to the angle $2\alpha$ in the opposite
direction.
\end{remark}

The next lemma shows how functions from the space
$C(\overline{G})\cap\mathcal O_\tau(G)$ behave near the boundary of a given
domain $G\subset\mathbb C$. In this connection see also
\cite{BagFed2017psim}, Lemma~1, where one close result was proved in a
different manner.

\begin{lemma}\label{lem-solest}
Let $G$ be a bounded simply connected domain in $\mathbb C$, let
$\tau\in(0,1)$, and let $f\in C(\overline{G})\cap\mathcal O_\tau(G)$. For a
given point $a\in G$ take a point $a'\in\partial G$ such that
$|a-a'|=\dist(a,\partial G)$, and put $d=|a-a'|$ and
$d_\tau=|Ta-Ta'|=|a_\tau-a'_\tau|$, where the mapping $T\colon z\mapsto
z_\tau$ is defined by the formula \eqref{eq-ztau}. Then for every integer
$m\geqslant1$ the functions $g$ and $h$ from the representation
\eqref{eq-soltau-nse} for $f$ admit the estimates
\begin{align}
|h^{(m)}(a_\tau)|&\leqslant
C_1\frac{m!}{d_\tau^m}\omega(f,d_\tau),\label{eq-estsol-h}\\
|g^{(m)}(a)|&\leqslant
C_2\frac{m!}{d^m}\omega(f,d),\label{eq-estsol-g}
\end{align}
where $\omega(f,\cdot)$ stands for the modulus of continuity of $f$ on
$\overline{G}$.
\end{lemma}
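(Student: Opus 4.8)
The plan is to exploit the representation \eqref{eq-soltau-nse}, $f(z)=h(z_\tau)+g(z)$, on the disks that sit inside $G$ and touch its boundary, and then read off the Cauchy-type estimates on $h$ and $g$ from a single estimate on $f$ over such a disk. First I would fix the point $a\in G$, the closest boundary point $a'$, and the radius $d=\dist(a,\partial G)$, so that $D(a,d)\subset G$; note $a'\in\partial D(a,d)\cap\partial G$. The key point is that on $D(a,d)$ both $g$ and $h(T\,\cdot)=h((\cdot)_\tau)$ are honest solutions of $\mathcal L_\tau f=0$ that are (a priori) only known to be continuous up to that part of $\partial D(a,d)$ which lies on $\partial G$ — in particular at the single point $a'$. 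So the first real task is to separate $h$ and $g$ using a linear functional that kills one of them. Since $h(z_\tau)$ is holomorphic in the variable $z_\tau=z-\tau\overline z$ and $g(z)$ is holomorphic in $z$, I would apply the operators $\partial$ and $\overline\partial$ (or, more efficiently, $\partial_\tau$ and $\overline\partial$): we have $\overline\partial z_\tau=-\tau$, $\partial_\tau z_\tau=0$, $\overline\partial z=0$, $\partial_\tau z=\tau$, hence $\overline\partial f=-\tau\,h'(z_\tau)$ and $\partial_\tau f=\tau\,g'(z)$, and more generally $\overline\partial^m f=(-\tau)^m h^{(m)}(z_\tau)$ and $\partial_\tau^m f=\tau^m g^{(m)}(z)$ (for $m\ge1$; since $\tau>0$ the prefactors are harmless constants).

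Next I would invoke interior estimates for derivatives of $\mathcal L_\tau$-solutions in terms of the sup-norm on a slightly larger disk: on $D(a,d)$ the function $f$ is $\mathcal L_\tau$-analytic, so for the concentric disk $D(a,d/2)$ one gets $|\partial_{z}^{j}\partial_{\overline z}^{k}f(a)|\le C\,d^{-(j+k)}\|f\|_{D(a,d)}$ with a constant depending only on $j+k$ and $\tau$ (this follows from \eqref{eq-soltau-nse}: both $g$ and $z\mapsto h(z_\tau)$ are harmonic after an affine change of variables, or one simply applies the Cauchy integral formula to each of the holomorphic pieces on the disk $D(a,d)$ of radius $d$ — recall $D(a,d)\subset G$, and $T$ distorts it into an ellipse of comparable size since the Jacobian of $T$ is $1-\tau^2$ and its operator norm is between $1-\tau$ and $1+\tau$). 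Applying this to $\overline\partial^m f$ evaluated at $a$ gives $|h^{(m)}(a_\tau)|\le C\,\tau^{-m} d^{-m}\sup_{D(a,d/2)}|f|$, and similarly for $g$; but the right-hand side as written involves $\|f\|$ rather than $\omega(f,\cdot)$.

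To replace the crude sup-norm by the modulus of continuity, the standard device is to subtract a constant: $\mathcal L_\tau(f-c)=\mathcal L_\tau f=0$ for every constant $c$, and $\overline\partial^m(f-c)=\overline\partial^m f$ for $m\ge1$, so we may apply the interior estimate to $f-f(a')$ instead of $f$. Then on $D(a,d)$ we have $|f(z)-f(a')|\le\omega(f,|z-a'|)\le\omega(f,2d)\le C\,\omega(f,d)$ (using $|z-a'|\le|z-a|+|a-a'|\le 2d$ and subadditivity/doubling of the modulus of continuity), which yields $|g^{(m)}(a)|\le C_2\,m!\,d^{-m}\omega(f,d)$, i.e.\ \eqref{eq-estsol-g}. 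For \eqref{eq-estsol-h} the same argument is run in the image variable: $T$ maps $D(a,d)$ onto a region containing a disk $D(a_\tau,c\,d)$ with $c\ge 1-\tau>0$, hence $d_\tau\ge(1-\tau)d$ and also $d_\tau\le(1+\tau)d$, so $d_\tau\asymp d$; writing $h(w)$ as a holomorphic function on $T(D(a,d))$, applying the Cauchy estimate on the inscribed disk about $a_\tau$, and bounding the boundary values of $h$ there via $h(z_\tau)=f(z)-g(z)$ together with the already-obtained control on $g$ (or, cleaner, re-deriving $h^{(m)}(a_\tau)=(-\tau)^{-m}\overline\partial^m f(a)$ and estimating $\overline\partial^m f$ directly on $D(a,d/2)$ as above), gives $|h^{(m)}(a_\tau)|\le C_1\,m!\,d_\tau^{-m}\omega(f,d_\tau)$.

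The step I expect to be the only genuine subtlety is making the interior derivative estimate legitimate \emph{up to the boundary point} $a'$: $f$ is merely continuous on $\overline G$, so one cannot immediately quote a classical interior estimate with $\|f\|_{D(a,d)}$ in place of $\|f\|_{\overline{D(a,d)}}$. This is handled by a routine exhaustion: apply the estimate on $D(a,(1-\varepsilon)d)$ with sup-norm over $\overline{D(a,(1-\varepsilon)d)}\subset G$, where $f$ is continuous, then let $\varepsilon\to0$, using that $\omega(f,\cdot)$ is continuous at $0$ and that both $d$ and $d_\tau$ depend continuously on $\varepsilon$; the constants $C_1,C_2$ do not degenerate because $\tau$ is fixed in $(0,1)$. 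Everything else — the commutation of $\overline\partial,\partial_\tau$ with the decomposition, the doubling of the modulus of continuity, the bi-Lipschitz bounds for $T$ — is bookkeeping. Note also that the hypothesis $\tau>0$ is used precisely to divide by $\tau^m$; the case $\tau=0$ (bianalytic functions) is genuinely different because then $\overline\partial^2 f=0$ already and the two pieces are separated by $\overline\partial$ alone, which is why the lemma is stated only for $\tau\in(0,1)$.
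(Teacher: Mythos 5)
Your reduction to the identities $\overline\partial^m f=(-\tau)^m h^{(m)}(z_\tau)$ and $\partial_\tau^m f=\tau^m g^{(m)}(z)$ is correct, but the step you lean on --- the ``interior estimate'' $|\partial_z^j\partial_{\overline z}^k f(a)|\leqslant C\,d^{-(j+k)}\|f\|_{D(a,d)}$ with a controlled constant --- is exactly where the proof is missing. Your justification of it is circular: applying the Cauchy integral formula ``to each of the holomorphic pieces'' requires sup-norm bounds on $g$ and on $h\circ T$ separately over $D(a,d)$, and such bounds in terms of $\sup|f|$ are precisely the content of the lemma being proved (the decomposition \eqref{eq-soltau-nse} is not a priori norm-controlled; that the two pieces cannot both be huge is what must be shown). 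If instead you invoke general interior estimates for solutions of a constant-coefficient elliptic equation as a black box, you must track how the constant depends on the order $m$: the lemma asserts the precise factor $m!$ with $C_1,C_2$ independent of $m$, whereas generic analyticity-type interior estimates give constants of the form $C^m m!$, which is weaker.

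Even granting a perfect estimate $|\overline\partial^m f(a)|\leqslant C\,m!\,d^{-m}\omega(f,d)$, your route does not yield \eqref{eq-estsol-h}: dividing by $\tau^m$ and converting $d^{-m}$ into $d_\tau^{-m}$ via $d_\tau\leqslant(1+\tau)d$ produces a factor $\bigl((1+\tau)/\tau\bigr)^m$, and these are not ``harmless constants'' --- they are exponential in $m$, so you obtain $C\bigl((1+\tau)/\tau\bigr)^m m!\,d_\tau^{-m}\omega$ rather than $C_1 m!\,d_\tau^{-m}\omega$. This loss is fatal for the intended use: in the proof of Theorem~\ref{thm-mainest} the lemma is applied for all $m$ simultaneously and the geometric factors are tracked exactly (convergence there hinges on $2\tau/(1+\tau)<1$), so an extra $((1+\tau)/\tau)^m$ cannot be absorbed. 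The paper avoids both problems by never differentiating $f$ more than once: one expands $h'$ in a Taylor series about $a_\tau$, multiplies by the moment weight $(\overline z_\tau-\overline a_\tau)^{m-1}$, integrates over the ellipse $\{|(z-a)_\tau|<r\}$, substitutes $h'(z_\tau)=-\tau^{-1}\overline\partial\bigl(f-f(a)\bigr)$ and integrates by parts once via Green's formula; orthogonality in the $z_\tau$-plane produces $h^{(m)}(a_\tau)$ while only a single factor $1/\tau$ appears, and the radius is measured directly in the $z_\tau$-variable, so the bound comes out as $C\,m!\,d_\tau^{-m}\omega(f,d_\tau)$ with $C$ depending only on $\tau$. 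To repair your argument you would need an analogous representation of $h^{(m)}(a_\tau)$ (and $g^{(m)}(a)$) as an integral functional of $f$ alone with the correct homogeneity; as written, the proposal does not prove the lemma in the stated form.
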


\begin{proof}
It is enough to prove \eqref{eq-estsol-h}, the proof of the remaining
estimate \eqref{eq-estsol-g} is similar. Take an arbitrary $r<d_\tau$. For
every $z\in D(a,r)$ the following Taylor-type expansion holds
$$
h'(z_\tau)=\sum_{k=0}^{\infty}\frac{h^{(k+1)}(a_\tau)}{k!}(z_\tau-a_\tau)^k.
$$
Multiplying this decomposition by
$(\overline{z}_\tau-\overline{a}_\tau)^{m-1}$ and integrating thereafter over
the ellipse $D_\tau(a,r)=\{z\colon |(z-a)_\tau|<r\}$, where
$(z-a)_\tau=T(z-a)$, we obtain
\begin{equation}\label{eq-wrk-1}
\int_{D_\tau(a,r)}h'(z_\tau)\,(\overline{z}_\tau-\overline{a}_\tau)^{m-1}\,dm_2(z_\tau)=
\frac{\pi r^{2m}}{m!}h^{(m)}(a_\tau).
\end{equation}
Since
$h'(z_\tau)=-\tau^{-1}\overline{\partial}f(z)=-\tau^{-1}\overline{\partial}f_a(z)$,
where $f_a(z)=f(z)-f(a)$, we have
\begin{align*}
\frac{\pi
r^{2m}}{m!}h^{(m)}(a_\tau)=&-\frac1\tau\int_{D_\tau(a,r)}\overline{\partial}f_a(z)\,
(\overline{z}_\tau-\overline{a}_\tau)^{m-1}\,dm_2(z_\tau)\\
=& -\frac1\tau
\int_{D_\tau(a,r)}\big(\overline\partial\big(f_a(z)(\overline{z}_\tau-\overline{a}_\tau)^{m-1}\big)-
f_a(z)\overline\partial(\overline{z}_\tau-\overline{a}_\tau)^{m-1}\big)\,dm_2(z_\tau)\\
=&-\frac{1-\tau^2}{2i\tau}\int_{C_\tau(a,r)}f_a(z)\,(\overline{z}_\tau-\overline{a}_\tau)^{m-1}\,dz\\
&+
\frac{m-1}{\tau}\int_{D_\tau(a,r)}f_a(z)\,(\overline{z}_\tau-\overline{a}_\tau)^{m-2}\,dm_2(z_\tau),
\end{align*}
where $C_\tau(a,r)=\{z\colon |(z-a)_\tau|=r\}$. Both items in the last sum
may be estimated directly, so that
$$
\bigg|\frac{\pi r^{2m}}{m!}h^{(m)}(a_\tau)\bigg|\leqslant
\frac{\pi r^m\omega(f,r)}{\tau}\bigg(1+\tau+\frac{2m-2}{m}\bigg)\leqslant \frac{3\pi
r^m}{\tau}\omega(f,r),
$$
which yields the desired estimate when we take $r=d_\tau$.
\end{proof}

\begin{remark}
In the proof of Lemma~\ref{lem-solest} one may use \cite{BagFed2017psim},
Lemma~1, that gives the desired estimates for $h'$ and $g'$. We can continue
the proof of Lemma~\ref{lem-solest} by putting this estimate into
\eqref{eq-wrk-1} and estimating the resulting integral in a suitable way.
Doing this one can show even a bit stronger estimates, than
\eqref{eq-estsol-h} and \eqref{eq-estsol-g}, namely the multiplier $m!$ in
\eqref{eq-estsol-h} and \eqref{eq-estsol-g} can be replaced with
$(m-1)!\sqrt{m}$.
\end{remark}

As a corollary of this lemma one can prove the following statement that was
obtained in a slightly different way in \cite{Zai2006psim}, Proposition~1.

\begin{corollary}\label{cor-anarc}
Let $G$ be a bounded simply connected domain in $\mathbb C$ such that its
boundary $\varGamma$ contains an analytic arc $\varUpsilon$, none of whose
points are cluster points for the set $\varGamma\setminus\varUpsilon$. Let
$\tau\in[0,1)$. Then $G$ is not $\mathcal L_\tau$-regular, and the $\mathcal
L_\tau$-Dirichlet problem in $G$ with the boundary function $1\big/(z-a)$ is
unsolvable, for any point $a\in G$ lying sufficiently close to $\varUpsilon$.
\end{corollary}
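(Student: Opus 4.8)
The plan is to derive a contradiction from the assumption that $G$ is $\mathcal L_\tau$-regular by exhibiting a specific boundary function whose putative $\mathcal L_\tau$-analytic extension has too much regularity along the analytic arc $\varUpsilon$. Concretely, I would take a point $a\in G$ close to $\varUpsilon$ and consider the boundary function $\psi(z)=1/(z-a)$, which lies in $C(\partial G)$ since $a\notin\partial G$. If $G$ were $\mathcal L_\tau$-regular there would be $f\in C(\overline G)\cap\mathcal O_\tau(G)$ with $f|_{\partial G}=\psi$; write $f(z)=h(z_\tau)+g(z)$ as in \eqref{eq-soltau-nse} (or, for $\tau=0$, in the bianalytic form $\overline z f_1(z)+f_0(z)$). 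The idea is that $\psi$ is holomorphic in a neighbourhood of $\varUpsilon$ except at $a$, and near $\varUpsilon$ the function $h$ must absorb all the ``non-holomorphic'' part of $f$; Lemma~\ref{lem-solest} forces $h$ to be extremely small near the relevant boundary points, which should contradict the fact that $f$ must equal $1/(z-a)$ on $\varUpsilon$, in particular $f$ cannot vanish identically along $\varUpsilon$ while $h$ is negligible.

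The key steps, in order, are as follows. First, fix a crosscut picture: choose $a\in G$ with $\dist(a,\varGamma)$ attained at a point $a'\in\varUpsilon$, and let the analytic arc $\varUpsilon$ admit a holomorphic extension to a two-sided neighbourhood $V$ in $\mathbb C$, with $V\cap\varGamma\subset\varUpsilon$ by the isolation hypothesis, so that $V\cap G$ is (a piece of) a smooth, even analytic-boundary, region near $a'$. Second, apply Lemma~\ref{lem-solest} at points $a_n\in G$ approaching $a'$ along the inner normal: with $d_n=\dist(a_n,\varGamma)\to0$ and $d_{\tau,n}=|Ta_n-Ta_n'|\asymp d_n$, the bounds \eqref{eq-estsol-h} give $|h^{(m)}((a_n)_\tau)|\le C\,m!\,d_{\tau,n}^{-m}\omega(f,d_{\tau,n})$, and since $f$ is continuous on $\overline G$ with $f|_\varGamma=\psi$ smooth near $a'$, one gets $\omega(f,\delta)=o(1)$ as $\delta\to0$; hence all Taylor coefficients of $h$ at $(a_n)_\tau$ are $o(m!\,d_{\tau,n}^{-m})$. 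Third, and this is the crux, combine these derivative bounds with the boundary identity: on the arc $\varUpsilon\cap V$ we have $h(z_\tau)=\psi(z)-g(z)$, and both $\psi$ and $g$ extend holomorphically (in $z$) across $\varUpsilon$ into $V\setminus G$, so $h\circ T$ also extends holomorphically across $\varUpsilon$; the decay estimates on $h$ near the boundary, propagated to the extension, force $h$ to vanish identically on a neighbourhood of $T(a')$, whence $g(z)=\psi(z)=1/(z-a)$ holomorphically near $a'$ — but then $f=g$ near $a'$ is holomorphic, and $f$ must also agree with $\psi$ on $\varGamma\setminus V$, which by a unique-continuation/argument-principle count about the pole at $a$ yields a contradiction.

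I expect the main obstacle to be step three: rigorously converting the pointwise derivative bounds of Lemma~\ref{lem-solest} into the conclusion that $h$ is the zero function near $T(a')$. The naive route — summing the Taylor series — does not immediately work because the bounds $|h^{(m)}(a_{n,\tau})|=o(m!\,d_{\tau,n}^{-m})$ only control $h$ on the disk of radius $d_{\tau,n}$, which shrinks as $a_n\to a'$. The fix should be a Schwarz-reflection or Cauchy-estimate argument across the analytic arc: because $h\circ T$ extends holomorphically to the full two-sided neighbourhood $V$ (using that $\psi-g$ does, and the analytic change of variable $z\mapsto z_\tau$ preserves analytic arcs), one can estimate $h$ on both sides of $\varUpsilon$ and use the maximum principle on a fixed neighbourhood of $T(a')$ together with the vanishing of $h$ and its derivatives at the accumulation point $T(a')$ to conclude $h\equiv0$ there. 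An alternative, perhaps cleaner, route is to expand $\psi(z)=1/(z-a)$ in powers adapted to the arc and quantify how the pole at $a$ (at distance $d_n$) makes the relevant coefficients of size $\asymp d_n^{-m}$, directly contradicting the $o(d_{\tau,n}^{-m})\asymp o(d_n^{-m})$ bound coming from $\omega(f,\delta)\to0$; the sharp matching of these exponents, and handling the degenerate case $\tau=0$ where one argues with $f_1$ instead of $h$, is where the real work lies.
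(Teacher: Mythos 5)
Your overall skeleton (argue by contradiction, decompose $f(z)=h(z_\tau)+g(z)$, use Lemma~\ref{lem-solest}, exploit the analyticity of $\varUpsilon$, and end with a pole-versus-holomorphy contradiction at $a$) is the same as the paper's, but the central step three contains genuine errors. First, you assert that $g$ extends holomorphically across $\varUpsilon$; nothing gives this -- $g$ is merely holomorphic in $G$, and its boundary behaviour is exactly what is unknown. Likewise ``$h\circ T$ extends holomorphically across $\varUpsilon$'' is not a meaningful claim: $T\colon z\mapsto z-\tau\overline z$ is only real-linear, so $h(z_\tau)$ is nowhere a holomorphic function of $z$. The correct object is $h\circ S_\tau$, where $S$ is the Schwarz function of the analytic arc and $S_\tau(z)=z-\tau S(z)$; this agrees with $h(z_\tau)$ on $\varUpsilon$ and is holomorphic in a one-sided region near $\varUpsilon$ (after checking that $S_\tau$ maps points of $G$ near $\varUpsilon$ into $TG$, which the paper verifies by a univalence/angle argument). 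Second, your claimed conclusion that the derivative bounds ``force $h$ to vanish identically near $T(a')$'' is false: the estimates \eqref{eq-estsol-h} only bound growth of $h^{(m)}$ and are satisfied, e.g., by $h(w)=w$, i.e. by the solution $f(z)=z_\tau$; if they forced $h\equiv0$, every solution would be holomorphic near an analytic boundary arc, which is absurd. Consequently your alternative ``coefficient size'' route also collapses: the pole of the boundary datum need not be visible in $h$ at all, since only the sum $g+h(z_\tau)$ is constrained on $\varGamma$.

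What the paper actually extracts from Lemma~\ref{lem-solest} (with $m=1$ only) is much weaker and suffices: joining $z_\tau$ to $S_\tau(z)$ by a curve $J\subset TG$ of length $O(d)$ staying at distance $\gtrsim d$ from $T\varUpsilon$, one gets $|h(S_\tau(z))-h(z_\tau)|\lesssim d\cdot\max_J|h'|\lesssim\omega(f,Cd)\to0$ as $z\to\varUpsilon$. Thus $g+h\circ S_\tau-1/(z-a)\to0$ near $\varUpsilon$, and the Luzin--Privalov boundary uniqueness theorem upgrades this limit relation to the exact identity $g(z)+h(S_\tau(z))=1/(z-a)$ on a subdomain $G_0\subset G$ abutting $\varUpsilon$ on which $h\circ S_\tau$ is holomorphic. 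Choosing $a\in G_0$ -- this is precisely where the hypothesis ``$a$ sufficiently close to $\varUpsilon$'' enters, a mechanism absent from your argument -- makes the left-hand side holomorphic at $a$ while the right-hand side has a pole, giving the contradiction. Your proposal is missing both of these devices (the Schwarz-function substitution $z_\tau\rightsquigarrow S_\tau(z)$ and the boundary uniqueness theorem), and the steps you propose in their place do not hold.
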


\begin{proof}
We start with the general case when $0<\tau<1$.

Let $a\in G$. Arguing by contradiction, let us assume that there exists a
function $f\in C(\overline{G})\cap\mathcal O_\tau(G)$ such that
$f|_{\varUpsilon}=1\big/(z-a)$. By \eqref{eq-soltau-nse} we have
$f(z)=g(z)+h(z_\tau)$, where $g$ and $h$ are two holomorphic functions in $G$
and $TG$, respectively.

Let $S$ be a Schwarz function of $\varUpsilon$, that is $S$ is the
holomorphic function in a neighborhood $V$ of $\varUpsilon$ such that
$\overline{z}=S(z)$ for all $z\in\varUpsilon$. It is clear, that such
function exists for any analytic curve or arc. See \cite{Dav1974book}, where
one can find an interesting introductory survey concerning the concept of a
Schwarz functions. Put $S_\tau(z):=z-\tau S(z)$, so that $z_\tau=S_\tau(z)$
for all $z\in\varUpsilon$. It can be readily verified that $S_\tau(z)\in TG$
for all $z$ lying in $G$ sufficiently close to $\varUpsilon$. Indeed, let
$\zeta\in\varUpsilon$. Since $S'_\tau(\zeta)=1-\tau S'(\zeta)$ and
$|S'(\zeta)|=1$, then $S'_\tau(\zeta)\neq0$ and therefore $S_\tau$ is
univalent in some neighborhood of $\zeta$. Let $\gamma$ be some subarc of
$\varUpsilon$ ending at the point $\zeta$ and let $\gamma'\subset
G\cup\{\zeta\}$ be some Jordan arc ending at $\zeta$ and non-tangential to
$\gamma$. So, $\angle_\zeta(\gamma,\gamma')\in(0,\pi)$, where
$\angle_\zeta(\gamma,\gamma')$ stands for the angle between $\gamma$ and
$\gamma'$ at $\zeta$. Since $S_\tau$ is univalent in a neighborhood of
$\zeta$, then
$\angle_{\zeta_\tau}(S_\tau(\gamma),S_\tau(\gamma')=\angle_\zeta(\gamma,\gamma')$.
The mapping $T\colon z\mapsto z_\tau$ is sense-preserving and hence
$\angle_{\zeta_\tau}(T\gamma,T\gamma')\in(0,\pi)$. Since
$T\gamma=S_\tau(\gamma)$, then
$\angle_{\zeta_\tau}(S_\tau(\gamma'),T\gamma)\in(0,\pi)$. Finally, since
$T\gamma'\subset TG\cup\{\zeta_\tau\}$, and since $TG$ is a Carath\'eodory
domain, then $S_\tau(\gamma')\subset TG\cup\{\zeta_\tau\}$ whenever the
length of $\gamma'$ is sufficiently small.

Let now $z\in G$ and $d=\dist(z,\varUpsilon)$. Then there exists two numbers
$C_1>0$ and $C_2>0$, independent on $d$, such that for all sufficiently small
$d$ the points $S_\tau(z)$ and $z_\tau$ can be join by some rectifiable curve
$J\subset TG$ with $\length(J)\leqslant C_1d$ and
$\dist(J,T\varUpsilon)\geqslant C_2d$. Thus
$$
h(S_\tau(z))-h(z_\tau)=\int_Jh'(\zeta_\tau)\,d\zeta_\tau
$$
and hence
$$
|h(S_\tau(z))-h(z_\tau)|\leqslant C_1d\max_{\zeta\in J}|h'(\zeta_\tau)|.
$$
According to Lemma~\ref{lem-solest} this gives $h(S_\tau(z))-h(z_\tau)\to0$
as $d\to0$. Therefore
$$
g(z)-h(S_\tau(z))-\frac1{z-a}\to0,\qquad\text{as}\quad d\to0,
$$
and hence, according to Luzin--Privalov boundary uniqueness theorem, we have
$$
g(z)-h(S_\tau(z))=\frac1{z-a}
$$
for all $z\in G$ sufficiently close to $\varUpsilon$. It remains to take a
domain $G_0\subset G$ such that $\varUpsilon\subset\partial G_0$ and the
function $h\circ S_\tau$ is holomorphic in $G_0$, and, finally, to take $a\in
G_0$. Then we arrive to a contradiction, because the function $g-h\circ
S_\tau$ is holomorphic in a neighborhood of $a$, but $1\big/(z-a)$ has a pole
therein.

The case $\tau=0$ was considered in \cite{CarParFed2002sbm}, Proposition~5.2.
The proof in this case is more simple. Indeed, the function $f$ has now the
form $f(z)=\overline{z}f_1(z)+f_0(z)$, where $f_0,f_1$ are holomorphic
functions in $G$, and
$f(z)-(f_0(z)+S(z)f_1(z))=f_1(z)(\overline{z}-S(z))\to0$ uniformly as
$z\to\varUpsilon_0$, $z\in G\cap V$, for some subarc
$\varUpsilon_0\subset\varUpsilon$. Then $f_1(z)S(z)+f_0(z)$ coincides with
$1\big/(z-a)$ in $G\cap V$, which is clearly impossible.
\end{proof}

\subsection*{Domains in $\mathbb C$ and their conformal mappings}

We recall that a Jordan curve $\varGamma$ is a homeomorphic image of the unit
circle $\mathbb T$, and an arc is a homeomorphic image of a straight line
segment. By virtue of the classical Jordan curve theorem, the set $\mathbb
C\setminus\varGamma$ is not connected. It consists of two connected
components $D(\varGamma)$ and $D_{\infty}(\varGamma)$, where $D(\varGamma)$
is the bounded one. The domain $D(\varGamma)$ is called a Jordan domain
bounded by $\varGamma$. Moreover, one has $\varGamma=\partial
D(\varGamma)=\partial D_\infty(\varGamma)$. It is clear, that every Jordan
domain is simply connected.

Following \cite{Pom1992book} we say, that a curve $\varGamma$ (which may be
both an arc, or a Jordan curve) is of class $C^n$, $n=1,2,\ldots$, if it has
a parametrization $\varGamma\colon w(\xi)$, $0\leqslant\xi\leqslant1$, which
is $n$ times continuously differentiable and satisfies $w'(\xi)\neq0$ for
$\xi\in[0,1]$. The curve $\varGamma$ is of class $C^{n,\alpha}$ where
$0<\alpha\leqslant1$, if moreover, this parametrization possesses the
property
$$
|w^{(n)}(\xi_1)-w^{(n)}(\xi_2)|\leqslant C|\xi_1-\xi_2|^{\alpha},
\quad\text{for}\quad \xi_1,\xi_2\in[0,1].
$$
If $\varGamma$ is a Jordan curve of class $C^{n,\alpha}$, and if
$G=D(\varGamma)$ is a Jordan domain bounded by $\varGamma$, one says that $G$
is a Jordan domain with the boundary of class $C^{n,\alpha}$.

Let now $G=D(\varGamma)$ be some Jordan domain in the complex plane bounded
by a Jordan curve $\varGamma$, and let $\varphi$ be some conformal map from
$\mathbb D$ onto $G$. According to the classical Carath\'eodory extension
theorem (see, for instance, \cite{Pom1992book}, Theorem~2.6), the function
$\varphi$ can be extended to the homeomorphism from $\overline{\mathbb D}$
onto $\overline{G}$. We will keep the notation $\varphi$ for this extended
homeomorphism. The following Kellogg--Warschawski theorem, see
\cite{Pom1992book}, Theorem~3.6, says that for any Jordan domain $G$ with the
boundary of class $C^{n,\alpha}$ the function $\varphi$ has the following
smoothness property:
\begin{atheorem}\label{thm-kelwar}
Let $\varphi$ map $\mathbb D$ conformally onto the inner domain of the Jordan
curve $\varGamma$ of class $C^{n,\alpha}$ where $n=1,2,\ldots$ and
$0<\alpha<1$. Then $\varphi^{(n)}$ has a continuous extension to
$\overline{\mathbb D}$ and
\begin{equation}\label{eq-confmaplip}
|\varphi^{(n)}(z_1)-\varphi^{(n)}(z_2)|\leqslant C|z_1-z_2|^\alpha,
\quad\text{for}\quad z_1,z_2\in\overline{\mathbb D}.
\end{equation}
\end{atheorem}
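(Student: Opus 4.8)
The plan is to deduce everything from the regularity of $\log\varphi'$, whose boundary values on $\mathbb T$ encode the tangent direction of $\varGamma$. The external inputs I would rely on are: the classical \emph{a priori} fact that a $C^{1}$ (indeed, Dini-smooth) boundary forces $\varphi'$ to extend continuously and without zeros to $\overline{\mathbb D}$, so that $\log\varphi'$ has a well-defined holomorphic branch on $\mathbb D$ continuous up to $\mathbb T$ (see \cite{Pom1992book}, Theorem~3.5); Privalov's theorem that for $0<\alpha<1$ the operator of harmonic conjugation maps the H\"older class $\Lambda_{\alpha}(\mathbb T)$ into itself; and the Hardy--Littlewood theorem that a function holomorphic in $\mathbb D$ with $\alpha$-H\"older boundary values belongs to $\Lambda_{\alpha}(\overline{\mathbb D})$. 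Let $w(s)$ be the arc-length parametrization of $\varGamma$ and $\gamma(s)=\arg w'(s)$ its tangent-angle function, and put $s(\theta)=\int_{0}^{\theta}|\varphi'(e^{it})|\,dt$, the arc length of the image of the boundary arc from $1$ to $e^{i\theta}$. Differentiating $\theta\mapsto\varphi(e^{i\theta})=w(s(\theta))$ and comparing arguments yields the key identity $\arg\varphi'(e^{i\theta})=\gamma(s(\theta))-\theta-\pi/2\pmod{2\pi}$.

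I would argue by induction on $n$, the base case $n=1$ being the heart of the matter. For $n=1$: since $\varGamma\in C^{1,\alpha}$, the unimodular vector $w'$ is $\alpha$-H\"older, so $\gamma\in\Lambda_{\alpha}$; and since $s(\theta)$ is $C^{1}$, hence Lipschitz, the composition $\gamma\circ s$ lies in $\Lambda_{\alpha}(\mathbb T)$, whence $\arg\varphi'|_{\mathbb T}\in\Lambda_{\alpha}(\mathbb T)$. By Privalov the conjugate boundary function $\log|\varphi'|$ also lies in $\Lambda_{\alpha}(\mathbb T)$, so $\log\varphi'|_{\mathbb T}\in\Lambda_{\alpha}(\mathbb T)$; Hardy--Littlewood gives $\log\varphi'\in\Lambda_{\alpha}(\overline{\mathbb D})$, and exponentiating gives $\varphi'\in\Lambda_{\alpha}(\overline{\mathbb D})$, which is \eqref{eq-confmaplip} for $n=1$. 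For the step from $n-1$ to $n$ with $n\geqslant2$: applying the case $n-1$ to the curve $\varGamma\in C^{n,\alpha}$, which is in particular $C^{n-1,\alpha}$, yields $\varphi'\in C^{n-2,\alpha}(\overline{\mathbb D})$; as $\varphi'$ has no zeros, $\theta\mapsto|\varphi'(e^{i\theta})|$ is $C^{n-2,\alpha}$, so $s\in C^{n-1,\alpha}$, while $\varGamma\in C^{n,\alpha}$ forces $\gamma\in C^{n-1,\alpha}$; by the chain rule, using $s'>0$, the composition $\gamma\circ s$ and hence $\arg\varphi'|_{\mathbb T}$ is $C^{n-1,\alpha}$. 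Differentiating $n-1$ times and using that conjugation commutes with $d/d\theta$, Privalov's theorem upgrades this to $(\log|\varphi'|)|_{\mathbb T}\in C^{n-1,\alpha}(\mathbb T)$, so $\log\varphi'|_{\mathbb T}\in C^{n-1,\alpha}(\mathbb T)$; Hardy--Littlewood and exponentiation then give $\varphi'\in C^{n-1,\alpha}(\overline{\mathbb D})$, that is $\varphi^{(n)}\in\Lambda_{\alpha}(\overline{\mathbb D})$ with the estimate \eqref{eq-confmaplip}.

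The main obstacle — and the place where the restriction $0<\alpha<1$ is unavoidable — is the transfer from $\arg\varphi'$ to $\log|\varphi'|$ through harmonic conjugation, since the conjugate function preserves $\Lambda_{\alpha}$ only for $0<\alpha<1$ and genuinely fails for the Lipschitz class $\Lambda_{1}$. The rest is essentially careful bookkeeping: establishing the boundary correspondence $s(\theta)$ with the right smoothness and with derivative bounded away from $0$, controlling the compositions by Fa\`a di Bruno-type estimates in H\"older classes, and keeping the inductive accounting of exponents consistent. None of this is conceptually hard, but it must be carried out with some care; alternatively, if one is willing to cite Kellogg's original $C^{1,\alpha}$ theorem, only the inductive step is needed.
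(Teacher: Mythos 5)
This statement is a quoted background result (the Kellogg--Warschawski theorem): the paper gives no proof of it and simply cites \cite{Pom1992book}, Theorem~3.6. Your sketch is the standard classical argument --- continuity and non-vanishing of $\varphi'$ from the Dini-smooth case to avoid circularity, the boundary identity $\arg\varphi'(e^{i\theta})=\gamma(s(\theta))-\theta-\pi/2$, Privalov's theorem on conjugate functions in $\Lambda_\alpha$ ($0<\alpha<1$), the Hardy--Littlewood theorem, and induction on $n$ --- and it is correct in outline, essentially reproducing the proof in the cited source; the only steps you defer (that the conjugate function of a $C^{n-1,\alpha}(\mathbb T)$ function is again $C^{n-1,\alpha}$, and that a holomorphic function continuous on $\overline{\mathbb D}$ with $C^{n-1,\alpha}$ boundary values lies in $C^{n-1,\alpha}(\overline{\mathbb D})$) are indeed standard.
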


The following proposition is the direct consequence of
Theorem~\ref{thm-kelwar} and the Cauchy integral formula.

\begin{corollary}\label{cor-x1}
Let $\alpha\in(0,1)$, let $G$ be a Jordan domain with the boundary
$\varGamma$ of class $C^{1,\alpha}$, and let $\varphi$ maps $\mathbb D$
conformally onto $G$. Then for every $z\in\mathbb D$ one has
\begin{equation}\label{eq-confmapest}
|\varphi''(z)|\leqslant\frac{C}{(1-|z|)^{1-\alpha}},\quad z\in\mathbb D.
\end{equation}
\end{corollary}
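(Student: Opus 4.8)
The plan is to combine the case $n=1$ of the Kellogg--Warschawski theorem (Theorem~\ref{thm-kelwar}) with a Cauchy-type estimate for $\varphi''$, using the standard device of subtracting a constant in order to convert the H\"older modulus of $\varphi'$ into a bound for its derivative. To begin, I would record what Theorem~\ref{thm-kelwar} yields for $n=1$: the derivative $\varphi'$ extends continuously to $\overline{\mathbb D}$ and there is a constant $C$ with $|\varphi'(z_1)-\varphi'(z_2)|\leqslant C|z_1-z_2|^\alpha$ for all $z_1,z_2\in\overline{\mathbb D}$.

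Next, I would fix $z\in\mathbb D$, set $\delta=1-|z|\in(0,1]$, and work on the circle $\partial D(z,\delta/2)$. Since $\operatorname{dist}(z,\mathbb T)=\delta>\delta/2$, the closed disk $\overline{D(z,\delta/2)}$ lies in $\mathbb D$, so $\varphi$ is holomorphic in a neighborhood of it and the Cauchy integral formula for the first derivative of $\varphi'$ applies on $\partial D(z,\delta/2)$. Because $\int_{\partial D(z,\delta/2)}(\zeta-z)^{-2}\,d\zeta=0$, one can insert the constant $-\varphi'(z)$ and write
\[
\varphi''(z)=\frac{1}{2\pi i}\int_{\partial D(z,\delta/2)}\frac{\varphi'(\zeta)-\varphi'(z)}{(\zeta-z)^2}\,d\zeta .
\]
On the contour of integration $|\zeta-z|=\delta/2$, and the H\"older bound above gives $|\varphi'(\zeta)-\varphi'(z)|\leqslant C(\delta/2)^\alpha$ (here it is essential that the contour stays inside $\overline{\mathbb D}$, which we have just checked). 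Estimating the integral by its length times the supremum of the integrand,
\[
|\varphi''(z)|\leqslant\frac{1}{2\pi}\cdot 2\pi\cdot\frac{\delta}{2}\cdot\frac{C(\delta/2)^\alpha}{(\delta/2)^2}
=C\Bigl(\frac{\delta}{2}\Bigr)^{\alpha-1}=\frac{C'}{(1-|z|)^{1-\alpha}},
\]
which is exactly the claimed estimate \eqref{eq-confmapest}.

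I do not expect any genuine obstacle here: all the analytic content is already contained in Theorem~\ref{thm-kelwar}, and the argument is the routine passage from a H\"older modulus of continuity of $\varphi'$ to a one-sided growth bound for $\varphi''$ near $\mathbb T$ via a Cauchy estimate on a disk of radius comparable to the distance to the boundary. The only points meriting a brief remark in the write-up are the choice of the radius $\delta/2$ (so that the contour remains in $\overline{\mathbb D}$, where the H\"older estimate is valid) and the use of $\int(\zeta-z)^{-2}\,d\zeta=0$ to subtract $\varphi'(z)$ before estimating.
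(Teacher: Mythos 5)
Your argument is correct and is precisely what the paper means when it calls the corollary a direct consequence of Theorem~\ref{thm-kelwar} and the Cauchy integral formula: the H\"older bound for $\varphi'$ on $\overline{\mathbb D}$ plus the Cauchy estimate on the circle of radius $(1-|z|)/2$, with $\varphi'(z)$ subtracted using $\int(\zeta-z)^{-2}\,d\zeta=0$. No gaps; nothing further is needed.
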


\subsection*{Main result and scheme of its proof}

As noted above, the problem of $\mathcal L$-regularity of a given domain $G$
is equivalent to the problem of $\mathcal L_\tau$-regularity of the domain
$TG$ for $\tau=\tau(\mathcal L)$. It is worth to note that the lack of
invariance of $\mathcal L$ (and even $\mathcal L_\tau$) under transformations
that change angles, takes no effect to the forthcoming constructions and
arguments, since we are dealing with the class of domains with
$C^{1,\alpha}$-smooth boundaries.

\begin{theorem}\label{thm-dirprob}
Let $\alpha\in(0,1)$, and let $G$ be a Jordan domain with the boundary
$\varGamma$ of class $C^{1,\alpha}$. Then, for every $\tau$,
$0\leqslant\tau<1$, the domain $G$ is not $\mathcal L_\tau$-regular.
\end{theorem}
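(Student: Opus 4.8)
The plan is to dispose of the degenerate case $\tau=0$ by invoking known results and then to reduce the range $0<\tau<1$ to a quantitative bound on the negative Fourier coefficients of an arbitrary boundary function that admits an $\mathcal L_\tau$-analytic extension; non-regularity will then follow by exhibiting a boundary function whose coefficients decay too slowly to meet that bound. For $\tau=0$ the operator is $\overline\partial^2$, and if $\varphi\colon\mathbb D\to G$ is conformal then Corollary~\ref{cor-x1} gives $|\varphi''(z)|\le C(1-|z|)^{\alpha-1}$, whence $\int_{\mathbb D}|\varphi''|\,dm_2\le C\int_0^1 r(1-r)^{\alpha-1}\,dr<\infty$ since $\alpha>0$; thus part~1 of Theorem~\ref{thm-mazalov} applies and $G$ is not $\overline\partial^2$-regular. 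Assume henceforth $0<\tau<1$.

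Suppose, for a contradiction, that $G$ is $\mathcal L_\tau$-regular; fix a conformal map $\varphi\colon\mathbb D\to G$, which by the Kellogg--Warschawski theorem lies in $C^1(\overline{\mathbb D})$ with $\varphi'$ Hölder continuous and $\varphi'\neq0$. For $\psi\in C(\partial G)$ choose an extension $f=h(z_\tau)+g(z)\in C(\overline G)\cap\mathcal O_\tau(G)$ as in \eqref{eq-soltau-nse}; integrating the estimates of Lemma~\ref{lem-solest} along arcs reaching $\partial G$ (as in the proof of Corollary~\ref{cor-anarc}) shows that $g$ and $h$ extend continuously to $\overline G$ and $\overline{TG}$, with moduli of continuity controlled by that of $\psi$. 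Put $H(w)=h\big(\varphi(w)-\tau\overline{\varphi(w)}\big)$ on $\overline{\mathbb D}$. Since $\partial_w\overline{\varphi(w)}=0$, one computes $\partial_w H=(h'\circ T\varphi)\,\varphi'$ and $\overline\partial_w H=-\tau\,(h'\circ T\varphi)\,\overline{\varphi'}$, so $H$ solves the Beltrami equation $\overline\partial_w H=\mu\,\partial_w H$ with $\mu(w)=-\tau\,\overline{\varphi'(w)}\,/\,\varphi'(w)$, which has $|\mu|\equiv\tau<1$ and is Hölder continuous on $\overline{\mathbb D}$. Solving the Beltrami equation with coefficient $\mu$ (and normalizing) yields a sense-preserving self-homeomorphism $\chi$ of $\overline{\mathbb D}$, of class $C^{1,\alpha}$ up to $\mathbb T$ by elliptic regularity, for which $\widetilde H:=H\circ\chi^{-1}\in A(\mathbb D)$. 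Restricting to $\mathbb T$ and using that $g\circ\varphi\in A(\mathbb D)$, we get $\psi(\varphi(e^{i\theta}))=\widetilde H(\chi(e^{i\theta}))+(g\circ\varphi)(e^{i\theta})$, so the coefficients $M_{-k}:=\frac1{2\pi}\int_0^{2\pi}\psi(\varphi(e^{i\theta}))\,e^{ik\theta}\,d\theta$ with $k\ge1$ are exactly the corresponding coefficients of $\widetilde H\circ\chi$.

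Write $\chi(e^{i\theta})=e^{i\sigma(\theta)}$ with $\sigma$ increasing (this is where $|\tau|<1$ enters). Since $\widetilde H=\sum_{n\ge0}a_n w^n\in A(\mathbb D)$, the coefficient $M_{-k}$ is a superposition of oscillatory integrals $\int_0^{2\pi}e^{i(n\sigma(\theta)+k\theta)}\,d\theta$, $n\ge0$, $k\ge1$, whose phases are non-stationary because $\sigma'>0$; combining a (fractional) integration by parts, which exploits the $C^{1,\alpha}$-bound on $\sigma$ together with \eqref{eq-confmaplip} and \eqref{eq-confmapest}, with the modulus-of-continuity estimates for $\widetilde H$ furnished by Lemma~\ref{lem-solest}, one obtains a genuine gain over the trivial bound $|M_{-k}|\le C\,\omega(\psi,1/k)$ --- this is the content of the estimate \eqref{eq-Mn-estonsol}, stated precisely in Theorem~\ref{thm-mainest}. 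Granting it, it suffices to take $\psi\in C(\partial G)$ whose pullback is the lacunary series $\psi(\varphi(e^{i\theta}))=\sum_{j\ge1}2^{-j\gamma}e^{-i2^j\theta}$ with $\gamma\in(0,1)$ small: then $\psi$ is continuous (indeed Hölder) on $\partial G$ while $M_{-2^j}=2^{-j\gamma}$ decays too slowly to satisfy the conclusion of the main estimate once $j$ is large. This contradiction shows that no such $f$ exists, i.e.\ $G$ is not $\mathcal L_\tau$-regular.

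The only substantial step is the estimate \eqref{eq-Mn-estonsol}; everything preceding it is essentially bookkeeping with the representation \eqref{eq-soltau-nse}, Lemma~\ref{lem-solest} and the measurable Riemann mapping theorem. The difficulty is that the trivial Fourier bound $|M_{-k}|\le C\,\omega(\psi,1/k)$ holds for \emph{every} $\psi\in C(\partial G)$ and is useless; one must wring out extra decay, which is available only because $\chi$ is sense-preserving and $g$ is holomorphic rather than anti-holomorphic --- a feature of $\mathcal L_\tau\in\NSE$ and not of $\mathcal L\in\SE$, for which the term $g(\overline z)$ would supply arbitrary negative Fourier coefficients and destroy the argument (in agreement with Theorems~\ref{thm-lebesgue} and \ref{thm-vervog}). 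Making this precise while only assuming $C^{1,\alpha}$-smoothness forces one to control, uniformly up to $\partial G$, the error terms generated by $\overline\partial z_\tau=-\tau\neq0$ and by $\varphi''$ --- which is exactly where the Kellogg--Warschawski estimates \eqref{eq-confmaplip}, \eqref{eq-confmapest} (valid precisely for $\alpha<1$) are used --- and this is the technical heart of the argument carried out in Section~\ref{s:proof}.
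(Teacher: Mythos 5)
Your reduction of the case $\tau=0$ via Corollary~\ref{cor-x1} and part~1 of Theorem~\ref{thm-mazalov} is fine, and your overall strategy (a decay estimate for negative Fourier-type coefficients of boundary traces, contradicted by a lacunary boundary function) is in the spirit of the paper. But the proposal has a genuine gap at exactly the point you yourself identify as ``the only substantial step'': the decay estimate is asserted, not proved, and the route you sketch towards it does not survive scrutiny. First, your preparatory claim that ``integrating the estimates of Lemma~\ref{lem-solest} along arcs reaching $\partial G$ shows that $g$ and $h$ extend continuously to $\overline{G}$ and $\overline{TG}$'' is unjustified and in general false: Lemma~\ref{lem-solest} gives $|g'(a)|\lesssim\omega(f,d)/d$, and $\int_0\omega(f,t)\,t^{-1}dt$ may diverge for a merely continuous $f$; already for harmonic functions the two pieces of a continuous solution need not be continuous up to the boundary. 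Without this, $H$ is not defined on $\overline{\mathbb D}$, $\widetilde H$ need not lie in $A(\mathbb D)$, and the clean splitting of the negative Fourier coefficients between $\widetilde H\circ\chi$ and $g\circ\varphi$ collapses. Second, even granting the quasiconformal factorization $H=\widetilde H\circ\chi$, the ``non-stationary phase / fractional integration by parts'' step is only a gesture: you must sum oscillatory integrals over all $n\geqslant0$ with coefficients $a_n$ that merely tend to zero, an amplitude and phase that are only $C^{1,\alpha}$, and you must produce a bound $\lesssim\|f\|_{\overline{G}}\,k^{-\delta}$ uniform in $n$; nothing in the sketch controls this sum, and relating the $a_n$ to $\|f\|_{\overline G}$ or $\omega(f,\cdot)$ is itself nontrivial.

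There is also a structural mismatch with what the paper actually proves. The estimate \eqref{eq-Mn-estonsol} (Theorem~\ref{thm-mainest}) is \emph{not} a bound on the full negative Fourier coefficients of $\psi\circ\varphi$: the functionals $\mathcal M_{n,\varepsilon,\zeta}$ carry a cutoff $\varPsi_{\varepsilon,\zeta}$ localizing near a boundary point where the tangent is horizontal. This localization is essential, because $\mathcal L_\tau$ is not rotation invariant (Remark~\ref{rem-change}) and the factor $(1+\tau)$ of Lemma~\ref{lem-confmap}, which makes the geometric series in the iterated Green's-formula argument converge for all $\tau\in(0,1)$, is available only near such a point; at a point with vertical tangent the corresponding factor degrades to $1-\tau$ and the scheme fails for $\tau$ close to $1$. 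Consequently your claimed global coefficient bound is strictly stronger than \eqref{eq-Mn-estonsol} and is not ``the content'' of it. This is also why the paper cannot argue by testing against one explicit lacunary boundary function and instead runs the Baire-category argument, perturbing trigonometric polynomials by $-i\delta z^{-(n+1)}/3$ and using properties 1) and 3) of the functionals. In short: your plan would be complete only if you proved your global Fourier-coefficient estimate, and that is precisely the hard analytic content (Section~\ref{s:proof}: iterated Green's formula, Lemmas~\ref{lem-solest}--\ref{lem-estim-reg}, the localization of Lemma~\ref{lem-confmap}, and the regularization step) which the proposal leaves unproved.
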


\begin{proof}
For $\tau=0$ this theorem was proved in \cite{Maz2009sbm}. Thus, in the rest
of the proof we assume that $\tau>0$. Let $\varphi$ be some conformal mapping
from $\mathbb D$ onto $G$ which is assumed already extended to the
corresponding homeomorphism from $\overline{\mathbb D}$ to $\overline{G}$.
Define the class of functions
$$
\mathcal K_\tau=\big\{F\colon F(z)=f(\varphi(z))\ \text{for}\
f\in C(\overline{G})\cap\mathcal O_\tau(G)\big\}.
$$
In view of \eqref{eq-soltau-nse} every function $F\in\mathcal K_\tau$ has the
form $F(z)=h(\varphi(z)-\tau\overline{\varphi(z)})+g(\varphi(z))$, where $g$
and $h$ are holomorphic functions in $G$ and $TG$, respectively. We will
prove not only the fact that $C(\mathbb T)\neq \mathcal K_{\tau,\mathbb
T}=\{F|_{\mathbb T}\colon F\in\mathcal K_\tau\}$, but we will establish that
$\mathcal K_{\tau,\mathbb T}$ is a Baire first category set in $C(\mathbb
T)$. As usual, $F|_{\mathbb T}$ stands for the restriction of $F$ to $\mathbb
T$.

We need the following result that will be established in
Section~\ref{s:proof} below: There exists a family $\{M_n\}_{n=1}^{\infty}$
of functionals defined on the space $C(\mathbb T)$ satisfying the following
properties
\begin{trivlist}
\item[\quad]
1)~there exists an absolute constant $\mu_0>0$ such that for every positive
integer $n$
\begin{equation}\label{eq-Mn-norm}
\|M_n\|=\mu_0=M_n(-i\overline{z}^{n+1});
\end{equation}
\item[\quad]
2)~for every function $F\in K_\tau$, $F=f\circ\varphi$, $f\in
C(\overline{G})\cap\mathcal O_\tau(G)$, and for every positive integer $n$
large enough we have
\begin{equation}\label{eq-Mn-estonsol}
|M_n(F)|\leqslant\gamma_n\|f\|_{\overline{G}},
\end{equation}
where $\gamma_n=\gamma_n(G,\tau)$ and $\gamma_n\to0$ as $n\to\infty$;
\item[\quad]
3)~for every trigonometric polynomial $P$ of degree $\nu$ and for every
integer $n>\nu$ large enough we have
\begin{equation}\label{eq-Mn-estontp}
|M_n(P)|\leqslant\gamma_{n-\nu}\|P\|_{\overline{\mathbb T}}.
\end{equation}
We recall, that $P$ is a function of the form
$P(z)=\sum_{k=-\nu}^{\nu}c_kz^k$, and $P|_{\mathbb
T}=\sum_{k=-\nu}^{\nu}c_ke^{ik\vartheta}$, where $\nu$ is a positive integer
and $c_k$, $-\nu\leqslant k\leqslant\nu$, are complex numbers (coefficients).
\end{trivlist}
It is crucial that $\gamma_n$ in \eqref{eq-Mn-estonsol} and
\eqref{eq-Mn-estontp} depends only on $G$ and $\tau$.

Take a number $H>0$ and consider the set
$$
\mathcal K_{\tau,\mathbb T,H}=\{F|_{\mathbb T}\colon F\in\mathcal K_\tau,\
\|F\|_{\overline{\mathbb D}}\leqslant H\}.
$$
For an arbitrary $\psi\in C(\mathbb T)$ and $\delta>0$, let $B(\psi,\delta)$
be the ball in the space $C(\mathbb T)$ with center $\psi$ and radius
$\delta$. We are going to prove that $\mathcal K_{\tau,\mathbb T,H}$ is not
dense in $B(\psi,\delta)$. Assume that $\|\psi\|_{\mathbb T}=1$ and take a
trigonometric polynomial $P$ of degree $\nu$ such that $\|\psi-P\|_{\mathbb
T}\leqslant\delta/3$. It follows from \eqref{eq-Mn-estontp} that
$|M_n(P)|\leqslant C_1\gamma_{n-\nu}$ if $n>\nu$ is large enough. Let
$P_0(z)=P(z)-i\delta z^{-(n+1)}/3$. Since $\gamma_n\to0$ as $n\to\infty$, one
can find $n_1$ such that $\gamma_{n-\nu}<\delta\mu_0/(6C_1)$ for $n>n_1$.
Therefore, for $n>n_1$ we have $|M_n(P_0)|\geqslant\delta\mu_0/6$. It yields
that for every function $\chi\in B(P_0,\delta/12)\subset B(\psi,\delta)$ we
have $|M_n(\chi)|\geqslant\delta\mu_0/12$ for $n>n_1$. On the other hand,
since $\gamma_n\to0$ as $n\to\infty$, there exists a positive integer $n_2$
such that $\gamma_n<\delta\mu_0/(20H)$ for every integer $n>n_2$. Thus,
$|M_n(F)|<\delta\mu_0/20$ for every integer $n>n_2$ and for every function
$F\in\mathcal K_{\tau,\mathbb T,H}$. This yields that the ball
$B(P_0,\delta/12)$ is contained in $B(\psi,\delta)$ and does not contain any
function from the space $\mathcal K_{\tau,\mathbb T,H}$.
\end{proof}


\section{Theorem~\ref{thm-mainest} and its proof}
\label{s:proof}

Within this section $G$ will denote a Jordan domain in $\mathbb C$ with the
boundary $\varGamma$, and $\varphi$ will denote some conformal mapping from
$\mathbb D$ onto $G$ which is already considered extended to the
corresponding homeomorphism from $\overline{\mathbb D}$ onto $\overline{G}$.

Our main aim in this section is to prove that there exists a family of
functionals $M_n$ on the space $C(\overline{G})\cap\mathcal O_\tau(G)$ for
which the properties 1)--3) used in the proof of Theorem~\ref{thm-dirprob}
are satisfied for some $\gamma_n=\gamma_n(G,\tau)\to0$ as $n\to\infty$.

Take a sufficiently small number $\varepsilon>0$ whose value will be
specified later, and for a given point $\zeta\in\mathbb T$ let us take the
positive real-valued function $\varPsi_{\varepsilon,\zeta}\in
C^\infty_0(D(\zeta,\varepsilon))$ such that
$$
\int\varPsi_{\varepsilon,\zeta}(z)\,dm_2(z)=1
$$
and
$$
\mu_0=\int_{\mathbb T}\varPsi_{\varepsilon,\zeta}(z)\,|dz|>0.
$$
Using this function, for every integer $n\geqslant0$ we define the functional
$$
\mathcal M_{n,\varepsilon,\zeta}\colon F\mapsto
\int_{\mathbb T}\varPsi_{\varepsilon,\zeta}(z)\,F(z)\,z^n\,dz
$$
acting on the space $C(\mathbb T)$. Since $\overline{z}z=|z|^2=1$ on $\mathbb
T$ we have
\begin{align*}
|\mathcal M_{n,\varepsilon,\zeta}(F)|\leqslant&
\|F\|_{\mathbb T}\int\varPsi_{\varepsilon,\zeta}(z)\,|dz|=\mu_0\|F\|_{\mathbb T},\\
\intertext{for $F\in C(\mathbb T)$, and}
\mathcal M_{n,\varepsilon,\zeta}(-i\overline{z}^{n+1})= &\int_{\mathbb
T}\varPsi_{\varepsilon,\zeta}(z)\,|dz|=\mu_0,
\end{align*}
which gives \eqref{eq-Mn-norm}.

The estimate \eqref{eq-Mn-estonsol} is the consequence of the following
result, the proof of which is the main aim of this section.

\begin{theorem}\label{thm-mainest}
Let $\alpha\in(0,1)$ and let $G$, $\varGamma$ and $\varphi$ be as mentioned
above. Assume that $\varGamma$ is of class $C^{1,\alpha}$. Moreover, suppose
that $0\in\varGamma$, $\varphi(1)=0$, and the tangent line to $\varGamma$ at
the origin coincides with the real axis.

Then for every $\tau\in(0,1)$ there exist such point $\zeta\in\mathbb T$ and
numbers $A=A(\tau,G)>0$ and $\varepsilon>0$, that for each function $f\in
C(\overline{G})\cap\mathcal O_{\tau}(G)$ and for every sufficiently large
$n\in\mathbb N$ the following inequality is satisfied
\begin{equation}\label{eq-mainest}
\mathcal M^*_{n,\varepsilon,\zeta}(f)=
\mathcal M_{n,\varepsilon,\zeta}(f\circ\varphi|_{\mathbb T})\leqslant
A\frac{\|f\|_{\overline{G}}}{n^{\alpha/2}}.
\end{equation}
\end{theorem}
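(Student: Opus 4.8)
The plan is to estimate the functional $\mathcal M_{n,\varepsilon,\zeta}(f\circ\varphi|_{\mathbb T})$ by exploiting the decomposition $f(z)=g(z)+h(z_\tau)$ from \eqref{eq-soltau-nse}, where $g$ is holomorphic in $G$ and $h$ is holomorphic in $TG$. The point of the functional $\mathcal M_{n,\varepsilon,\zeta}$ is that it picks out the ``frequency $n+1$ in $\overline z$'' behavior of a function on $\mathbb T$; for the holomorphic piece $g\circ\varphi$ (which is, after composing with $\varphi$, a function whose boundary values are those of a holomorphic function on $\mathbb D$), this behavior is governed by the Taylor coefficients of high negative index, which vanish — so the contribution of $g$ is essentially zero, or at least controllable. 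The real content is the contribution of $h(z_\tau)$, and here is where the smoothness hypothesis $C^{1,\alpha}$ and the normalization ($0\in\varGamma$, $\varphi(1)=0$, tangent line at $0$ is the real axis) enter. First I would use Lemma~\ref{lem-solest} together with Corollary~\ref{cor-x1} (Kellogg--Warschawski) to control the derivatives of $h$ near the relevant boundary point $\zeta_\tau=T\varphi(\zeta)$: near $1\in\mathbb T$ we have $1-|\varphi(z)-\tau\overline{\varphi(z)}|$ comparable to $1-|z|$ up to the geometry of the curve, and $|h'|$ is bounded by $C\,\omega(f,d_\tau)/d_\tau$.

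The key steps, in order, would be: (i) reduce $\mathcal M_{n,\varepsilon,\zeta}(f\circ\varphi)$ to an integral over a small arc of $\mathbb T$ near a suitably chosen $\zeta$; the choice of $\zeta$ should be dictated by where the map $z\mapsto T\varphi(z)$ behaves most like a rotation/dilation, so that $z_\tau=\varphi(z)-\tau\overline{\varphi(z)}$ stays well inside $TG$ with the right rate — by Remark~\ref{rem-change} and the tangent-line normalization one expects $\zeta=1$ (or a point determined by $\tau$ and the tangent direction) to work. (ii) Split $f=g+h\circ T$ and observe that $\int_{\mathbb T}\varPsi_{\varepsilon,\zeta}(z)\,g(\varphi(z))\,z^n\,dz$ can be made to vanish, or be absorbed, because $g\circ\varphi\in H^\infty$-type class has only nonnegative Fourier modes while $z^n\,dz$ against $\varPsi$ of small support still sees a genuine analytic cancellation — more precisely one integrates by parts / uses that $g\circ\varphi$ extends holomorphically and deforms the contour inward. (iii) For the $h$-term, write $h(z_\tau)$ via its power series around the center $a_\tau$ corresponding to a point $a=\varphi(\rho\zeta)$ with $\rho=1-1/n$, and estimate term-by-term using $|h^{(m)}(a_\tau)|\leqslant C\,m!\,d_\tau^{-m}\,\omega(f,d_\tau)$ from \eqref{eq-estsol-h}, with $d_\tau\asymp 1/n$ and $\omega(f,1/n)\leqslant 2\|f\|_{\overline G}$; the oscillatory factor $z^n$ on the arc of length $\asymp n^{-1/2}$ (this is the source of the exponent $\alpha/2$) produces an extra gain. (iv) Collect the estimates: the $C^{1,\alpha}$-smoothness, through \eqref{eq-confmapest}, gives $|\varphi''|\leqslant C(1-|z|)^{\alpha-1}$, which converts the naive bound $O(1)$ into $O(n^{-\alpha})$ for the ``second-order'' error of approximating $\varphi$ by its linear part, and taking the square root in the stationary-phase / arc-length balance yields $n^{-\alpha/2}$.

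The main obstacle, I expect, is step (iii)–(iv): controlling the interplay between the non-conformal map $T$ (recall $\mathcal O_\tau$ is neither conformally nor Möbius invariant, as emphasized in the text) and the oscillatory kernel $z^n$ on $\mathbb T$. One must show that the image arc $T\varphi(\{|z-\zeta|<\varepsilon\})$ is, to first order, a dilated rotated copy of the corresponding arc — so that $h(z_\tau)$ restricted there looks like a holomorphic function of a near-linear image of $z$ — and that the second-order deviation, which is exactly where $C^{1,\alpha}$ rather than $C^2$ matters, contributes an error of size $n^{-\alpha}$ rather than $n^{-1}$. Concretely, I would write $\varphi(z)=\varphi(\zeta)+\varphi'(\zeta)(z-\zeta)+R(z)$ with $|R(z)|\leqslant C|z-\zeta|^{1+\alpha}$ by Theorem~\ref{thm-kelwar}, hence $z_\tau=\varphi'(\zeta)(z-\zeta)-\tau\overline{\varphi'(\zeta)}(\overline z-\overline\zeta)+(\text{lower order})+R(z)-\tau\overline{R(z)}$; on the arc $|z-\zeta|\leqslant\varepsilon$ with the right choice of $\zeta$ the linear part is a genuine (invertible, sense-preserving since $|\tau|<1$) real-linear map, and the remainder is $O(\varepsilon^{1+\alpha})$. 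Feeding this into the Taylor estimates for $h$ and balancing the oscillation length against the decay rate $1/n$ of $\dist(z_\tau,\partial TG)$ is the delicate computation; everything else — the normalization reductions, the vanishing of the $g$-part, and the final assembly into \eqref{eq-mainest} — should be routine once this estimate is in place.
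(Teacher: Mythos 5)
Your plan correctly guesses several ingredients (the choice $\zeta=1$ forced by the tangent-line normalization, the splitting $f=g+h\circ T$, the use of Lemma~\ref{lem-solest} and of Corollary~\ref{cor-x1}), but the mechanism you propose for the actual estimate has genuine gaps. First, you cannot estimate the $g$-contribution and the $h$-contribution of the boundary integral separately against $\|f\|_{\overline{G}}$: the decomposition \eqref{eq-soltau-nse} gives no uniform bound of $\|g\|$ or of boundary values of $g$, $h\circ T$ in terms of $\|f\|$ (indeed $g$ and $h\circ T$ need not extend continuously to $\partial G$ at all), and the integral of $\varPsi_{\varepsilon,\zeta}(z)g(\varphi(z))z^n$ over $\mathbb T$ does not vanish, since $\varPsi_{\varepsilon,\zeta}$ is not holomorphic. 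The paper never performs this split on the boundary: it applies Green's formula to the whole product $\varPsi_\varepsilon\cdot(f\circ\varphi)$, so that $\overline\partial$ annihilates $g\circ\varphi$ automatically, leaving a harmless term of size $O(\|f\|/n)$ (with $\overline\partial\varPsi_\varepsilon$) plus an area integral containing only $\overline{\varphi'}\,h'(T\varphi)$. Second, your step (iii) cannot work as stated: the Taylor expansion of $h$ at $a_\tau$ with $d_\tau\asymp 1/n$ converges only in a disk of radius $\asymp 1/n$, far too small to represent $h(z_\tau)$ on an arc of fixed length $\varepsilon$ (or even of length $n^{-1/2}$); and there is no stationary-phase gain to be had from $z^n$, because the right-hand side of \eqref{eq-mainest} involves only $\|f\|_{\overline{G}}$ of a merely continuous $f$, so no integration by parts along $\mathbb T$ is available. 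The exponent $\alpha/2$ is not a square root of an $n^{-\alpha}$ linearization error.

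What actually drives the proof is different: after moving into the disk by Green's formula, one iterates the integration-by-parts identity of Lemma~\ref{lem-green}, generating terms with $\dfrac{(-\tau)^k}{k!}(1-|z|^2)^k z^{n-k}\,\overline{\varphi'}^{\,k}h^{(k)}(T\varphi(z))$. These are summable because of a quantitative geometric fact you did not identify: at the boundary point with horizontal tangent, the map $T\colon z\mapsto z-\tau\overline z$ expands the normal (vertical) direction by the factor $1+\tau$ (Lemma~\ref{lem-confmap}), so the derivative bounds of Lemma~\ref{lem-estim-reg} carry the factor $\bigl((1+\beta(\varepsilon))/(1+\tau)\bigr)^k$, and the resulting ratio $2\tau(1+\beta(\varepsilon))/(1+\tau)$ is strictly less than $1$; without this $(1+\tau)$ the scheme diverges already for $\tau\geqslant 1/2$, which is the real reason the choice of $\zeta$ matters. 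The smoothness $C^{1,\alpha}$ enters not through a boundary linearization of $\varphi$ but through \eqref{eq-confmapest}, i.e. $|\varphi''(z)|\lesssim(1-|z|)^{\alpha-1}$, when $\overline\partial$ falls on $\overline{\varphi'}^{\,k}$; the radial integral then yields factors $(n-k)^{-\alpha}$, and summing $k\,\tau_1^k(n-k)^{-\alpha}$ (split at $k\approx n^{\alpha/4}$) produces the rate $n^{-\alpha/2}$. Finally, the paper first assumes $f$ to be $\mathcal L_\tau$-analytic in a neighborhood of $\overline{G}$ (so that the boundary manipulations are legitimate) and removes this assumption by a regularization/approximation step at the end — another point your outline would need to address.
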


To prove this theorem we need several technical lemmas. Let us recall that
the real-linear transformation $T\colon\mathbb C\mapsto\mathbb C$ is defined
in such a way that $Tz=z_\tau=z-\tau\overline{z}$.

\begin{lemma}\label{lem-confmap}
Let $G$ be a Jordan domain in $\mathbb C$ with the boundary $\varGamma$, and
let $\varphi$ be some conformal mapping from $\mathbb D$ onto $G$. Assume
that $\varphi\in C^1(\overline{\mathbb D})$ and $\varphi'(z)\neq0$ as
$z\in\varGamma$. Suppose moreover, that $0=\varphi(1)\in\varGamma$, and the
tangent line to $\varGamma$ at the origin is the real line. Then there exists
$\varepsilon>0$ such that for every point $a\in
B_\varepsilon=\varphi(D(1,\varepsilon)\cap\mathbb D)$ the following estimate
takes place
$$
|T(a-b)|\geqslant (1+\tau)(1-\varepsilon)(1-|a_0|)
\min_{z\in D(1,\varepsilon)\cap\mathbb D}|\varphi'(z)|,
$$
where $a_0=\varphi^{-1}(a)$ and $b\in\varGamma$ is the nearest point to $a$.
\end{lemma}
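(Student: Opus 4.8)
The plan is to reduce the estimate to two ingredients: a pointwise identity for the real‑linear map $T$, and the fact that $\varGamma$ is nearly horizontal close to the origin. For a nonzero $w=|w|e^{i\theta}$ one has $|Tw|=|w-\tau\overline w|=|w|\,\bigl|e^{2i\theta}-\tau\bigr|$, and since $0\le\tau<1$ the factor $\bigl|e^{2i\theta}-\tau\bigr|$ ranges over $[1-\tau,1+\tau]$ and equals $1+\tau$ precisely when $w$ is purely imaginary; quantitatively, if $\arg w$ is within $\theta_0$ of $\pm\pi/2$ then $\bigl|e^{2i\theta}-\tau\bigr|\ge(1+\tau)(1-c\theta_0^2)$ with $c$ absolute. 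So I would aim to show, for $\varepsilon$ small, that $a-b$ makes an angle $o(1)$ with the imaginary axis and that $|a-b|\ge(1-o(1))(1-|a_0|)\min_{D(1,\varepsilon)\cap\mathbb D}|\varphi'|$, where $o(1)$ refers to $\varepsilon\to0$; the error factors are then absorbed into $(1-\varepsilon)$.

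First I would localize. If $a_0\in D(1,\varepsilon)\cap\mathbb D$ then, since $\varphi\in C^1(\overline{\mathbb D})$ and $\varphi(1)=0$, the point $a=\varphi(a_0)$ satisfies $|a|\le\|\varphi'\|_{\overline{\mathbb D}}\,\varepsilon$; because $0\in\varGamma$, the nearest point $b$ obeys $|a-b|=\dist(a,\varGamma)\le|a|$, hence $|b|\le2\|\varphi'\|_{\overline{\mathbb D}}\,\varepsilon$ as well. Using that $\varphi$ is a homeomorphism of $\overline{\mathbb D}$ onto $\overline G$ (so $\varphi^{-1}$ is uniformly continuous and $\varphi^{-1}(0)=1$), there is $\varepsilon'=\varepsilon'(\varepsilon)\to0$ with $a_0$ and $b_0:=\varphi^{-1}(b)\in\mathbb T$ both lying in $D(1,\varepsilon')\cap\overline{\mathbb D}$.

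For the direction of $a-b$: since $a\in G$ is an interior point and $\varGamma=\varphi(\mathbb T)$ is a $C^1$ Jordan curve, $b$ is not an endpoint, and first‑order optimality forces $a-b$ to be orthogonal to the tangent direction of $\varGamma$ at $b$. That direction is $\arg\!\bigl(iw\varphi'(w)\bigr)$ in the parameter $w$ (well defined near $w=1$ since $\varphi'(1)\ne0$), is continuous, and is horizontal at $0$ by hypothesis; as $b_0\in D(1,\varepsilon')$ it makes an angle $\theta_0(\varepsilon')\to0$ with the real axis, so $a-b$ makes an angle $\le\theta_0(\varepsilon')$ with the imaginary axis and $|T(a-b)|\ge(1+\tau)\bigl(1-c\theta_0(\varepsilon')^2\bigr)|a-b|$. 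For the modulus: on the convex set $D(1,\varepsilon')\cap\overline{\mathbb D}$ uniform continuity of $\varphi'$ gives $|\varphi'(\zeta)-\varphi'(1)|\le\eta(\varepsilon')\to0$, so integrating $\varphi'$ along $[b_0,a_0]$ gives $a-b=\varphi'(1)(a_0-b_0)+R$ with $|R|\le\eta(\varepsilon')|a_0-b_0|$, whence $|a-b|\ge(|\varphi'(1)|-\eta(\varepsilon'))(1-|a_0|)$ using $|a_0-b_0|\ge\dist(a_0,\mathbb T)=1-|a_0|$; since $\min_{D(1,\varepsilon)\cap\mathbb D}|\varphi'|\le|\varphi'(1)|$ (as $1$ lies in the closure of that set) and this minimum is $\ge|\varphi'(1)|/2$ for small $\varepsilon$, I get $|\varphi'(1)|-\eta(\varepsilon')\ge(1-\eta_1(\varepsilon))\min_{D(1,\varepsilon)\cap\mathbb D}|\varphi'|$ with $\eta_1(\varepsilon)\to0$.

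Combining these, $|T(a-b)|\ge(1+\tau)\bigl(1-c\theta_0(\varepsilon')^2\bigr)(1-\eta_1(\varepsilon))(1-|a_0|)\min_{D(1,\varepsilon)\cap\mathbb D}|\varphi'|$, and choosing $\varepsilon$ small enough that $\bigl(1-c\theta_0(\varepsilon')^2\bigr)(1-\eta_1(\varepsilon))\ge1-\varepsilon$ would finish the proof. I expect the main obstacle to be control of the \emph{direction} of $a-b$: one must guarantee that the foot of the perpendicular from $a$ lands where $\varGamma$ is nearly horizontal, which is precisely why the localization step (exploiting $\varphi(1)=0$ and the boundary continuity of both $\varphi$ and $\varphi^{-1}$) is indispensable; the remaining estimates are routine.
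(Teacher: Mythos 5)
Your proof is correct, and it reaches the paper's bound by a somewhat different route. The paper's argument introduces the auxiliary point $c\in\varGamma$ having the same abscissa as $a$, applies the sine theorem to the triangle with vertices $a$, $b$, $c$ to show that $|b-c|$ is negligible compared with $|a-c|$, exploits that $T$ stretches the exactly vertical vector $a-c$ by the factor $1+\tau$, and bounds $|a-c|$ from below by $(1-|a_0|)\min|\varphi'|$ through the inequality $|\varphi(a_0)-\varphi(c_0)|\geqslant|a_0-c_0|\min|\varphi'|$. You instead work with $a-b$ directly: nearest-point optimality makes $a-b$ orthogonal to the tangent of $\varGamma$ at $b$, which is nearly horizontal because the localization step forces $b_0=\varphi^{-1}(b)$ into $D(1,\varepsilon')$, and the identity $|Tw|=|w|\,\bigl|e^{2i\arg w}-\tau\bigr|$ quantifies that $T$ expands nearly vertical vectors by at least $(1+\tau)$ up to an error $O(\theta_0^2)$; the length $|a-b|$ is then bounded below by linearizing $\varphi$ at $1$ along the segment $[b_0,a_0]$, which stays in $\overline{\mathbb D}$ by convexity of $D(1,\varepsilon')\cap\overline{\mathbb D}$, combined with $|a_0-b_0|\geqslant 1-|a_0|$. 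Both proofs rest on the same geometric fact (near-verticality of the segment from $a$ to its nearest boundary point near the origin); what your version buys is a cleaner modulus estimate, since convexity of the localized set guarantees the integration path and so avoids the paper's step $|\varphi(a_0)-\varphi(c_0)|\geqslant|a_0-c_0|\min|\varphi'|$, which as written needs a short additional justification, at the cost of an explicit angle computation for $T$ in place of the triangle/sine-theorem argument. One caveat you share with the paper: in the final step both arguments absorb quantities that are only known to be $o(1)$ as $\varepsilon\to0$ (your $\theta_0(\varepsilon')$ and $\eta_1(\varepsilon)$, the paper's ratio $|c-b|/|a-c|$) into the factor $1-\varepsilon$, which literally requires those errors to be eventually at most $\varepsilon$; this is immaterial for the way the lemma is used, where only a constant of the form $(1+\tau)\bigl(1-\beta(\varepsilon)\bigr)$ with $\beta(\varepsilon)\to0$ matters (cf.\ Lemma~\ref{lem-estim-reg}).
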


\begin{proof}
For notation simplification we put $D_\varepsilon=D(1,\varepsilon)$ and
$W_\varepsilon=D_\varepsilon\cap\mathbb D$. Also we put $a=\xi_0+i\eta_0$.
Consider a sufficiently small arc $\gamma$ of $\varGamma$ containing the
origin such that $\gamma$ can be parameterized by the equation
$\eta=\psi(\xi)$, $\zeta=\xi+i\eta\in\gamma$ (we have used here the fact that
$\varGamma$ is a smooth curve and $\varphi'(z)\neq0$ for $z\in\varGamma$).
Taking $\varepsilon$ small enough we obtain that
$\gamma\subset\varphi(D_\varepsilon\cap\mathbb T)$, and the point
$c:=\xi_0+i\psi(\xi_0)\in\varGamma$, see Fig.~\ref{fig-lemconfmap}.

\begin{figure}[H]
\includegraphics[width=100mm]{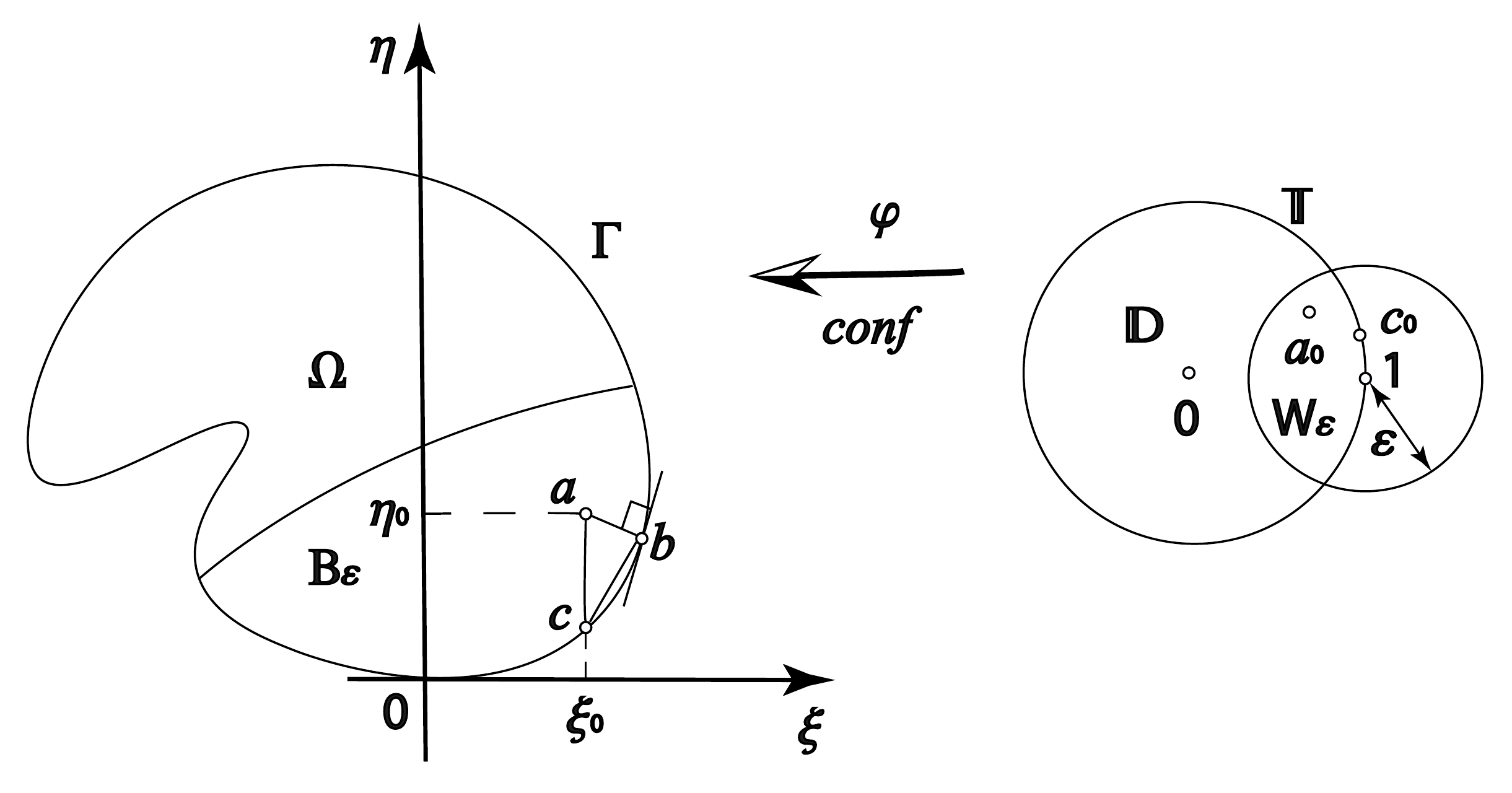}
\caption{Construction from Lemma~\ref{lem-confmap}}
\label{fig-lemconfmap}
\end{figure}

For an arbitrary point $\zeta\in\gamma$ the quantity
$|a-\zeta|^2=(\xi_0-\xi)^2+(\eta_0-\psi(\xi))^2$ attains its minimum at the
point $\xi+i\psi(\xi)$ where $\xi$ is such that
$(\xi_0-\xi)+\psi'(\xi)(\eta_0-\psi(\xi))=0$. Notice that the latter equation
is the equation of normal to $\gamma$ passed from the point $a$. Thus, the
point $b\in\gamma$ nearest to $a$ belongs to the normal to $\gamma$ passing
from $a$. Notice, that without loss of generality we may assume that
$\xi>\xi_0$.

Denote by $\varDelta$ the triangle with vertexes at the points $a$, $b$ and
$c$, and denote the angles at the vertices $a$, $b$ and $c$ of this triangle
by $\beta(a)$, $\beta(b)$ and $\beta(c)$, respectively. Since the points $a$
and $b$ belong to the normal to $\varGamma$, then $b-a=k(\psi'(\xi)-i)$ with
$k\in\mathbb R$. Thus
$$
\sin\beta(a)=\frac{|\psi'(\xi)|}{\sqrt{1+\psi'(\xi)^2}},\quad\text{and}
\quad
\cos\beta(c)=\frac{\psi'(\widetilde\xi)}{\sqrt{1+\psi'(\widetilde\xi)^2}},
$$
where $\xi_0\leqslant\widetilde\xi\leqslant\xi$ (according to Lagrange's mean
value theorem). Since $\psi'(0)=0$ and since $\xi-\xi_0$ is small when
$\varepsilon$ is small enough, then both quantities $\sin\beta(a)$ and
$\cos\beta(c)$ are also small and hence $\beta(a)\to0$ and
$\beta(c)\to\dfrac\pi2$ as $\varepsilon\to0$. Therefore
$\beta(b)\to\dfrac\pi2$ as $\varepsilon\to0$. Applying the sine theorem to
the triangle $\varDelta$ we obtain
$$
\frac{|c-b|}{\sin\beta(a)}=\frac{|a-c|}{\sin\beta(b)},
$$
which gives that
$$
\frac{|c-b|}{|a-c|}=\frac{\sin\beta(a)}{\sin\beta(b)}\to0
$$
as $\varepsilon\to0$. Hence, for sufficiently small $\varepsilon$ we have
that $|c-b|<\varepsilon|a-c|$ for all $a\in B_\varepsilon$. Moreover, for all
such $a$ we have
$$
|(a-b)_\tau|\geqslant(1+\tau)\big||a-c|-|b-c|\big|\geqslant
(1+\tau)(1-\varepsilon)|a-c|.
$$
It remains to use this estimate together with the following one
$$
|a-c|=|\varphi(a_0)-\varphi(c_0)|
\geqslant|a_0-c_0|\min_{z\in W_\varepsilon}|\varphi'(z)|
\geqslant(1-|a_0|)\min_{z\in W_\varepsilon}|\varphi'(z)|,
$$
where $c_0=\varphi^{-1}(c)$. The lemma is proved.
\end{proof}

\begin{remark}\label{rem-domain}
In the context of the problem under consideration we may assume that for any
Jordan domain $B$ with smooth boundary the origin belongs to $\partial B$ and
the tangent line to $B$ at this point is the real line. Indeed for any such
domain $B$ the set $\partial B$ contains a point $w$ having minimum ordinate
along $\partial B$. The tangent line to $\partial B$ at $w$ is horizontal. It
remains to use shift moving $w$ to the origin, and recall that the operator
$\mathcal L_\tau$ is invariant under such transformation of the plane (see
Remark~\ref{rem-change}).
\end{remark}

Combining Lemmas~\ref{lem-solest} and~\ref{lem-confmap} together we have the
next proposition that contains our central estimates.

\begin{lemma}\label{lem-estim}
Suppose all conditions of Lemmas~\ref{lem-solest} and~\ref{lem-confmap} to be
satisfied, and let $\varepsilon$ is taken from Lemma~\ref{lem-confmap}. Then
for all $z\in W_\varepsilon=D(1,\varepsilon)\cap\mathbb D$ we have
\begin{align}
\big|\varphi'(z)^mh^{(m)}(T\varphi(z))\big|&\leqslant
C\frac{m!R_\varepsilon^m}{(1+\tau)^m(1-\varepsilon)^m(1-|z|)^m}\|f\|_{\overline{G}},\label{eq-est-h}\\
\big|\varphi'(z)^mg^{(m)}(\varphi(z))\big|&\leqslant
C\frac{m!R_\varepsilon^m}{(1-|z|)^m}\|f\|_{\overline{G}},
\end{align}
where $R_\varepsilon=\dfrac{\max\{|\varphi'(z)\colon z\in
W_\varepsilon\}}{\min\{|\varphi'(z)\colon z\in W_\varepsilon\}}$.
\end{lemma}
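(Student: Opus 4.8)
The plan is to derive Lemma~\ref{lem-estim} by feeding the geometric comparison of Lemma~\ref{lem-confmap} into the analytic bounds of Lemma~\ref{lem-solest}. Fix $z\in W_\varepsilon$, put $a=\varphi(z)$, and let $b\in\varGamma$ be the nearest boundary point to $a$, so that $d=\dist(a,\varGamma)=|a-b|$ and $d_\tau=|T(a-b)|=|a_\tau-b_\tau|$ in the notation of Lemma~\ref{lem-solest}. Since $f\in C(\overline G)\cap\mathcal O_\tau(G)$ and $\omega(f,t)\leqslant 2\|f\|_{\overline G}$ for every $t$, the estimates \eqref{eq-estsol-h} and \eqref{eq-estsol-g} immediately give
\[
|h^{(m)}(a_\tau)|\leqslant C\,\frac{m!}{d_\tau^m}\,\|f\|_{\overline G},\qquad
|g^{(m)}(a)|\leqslant C\,\frac{m!}{d^m}\,\|f\|_{\overline G}.
\]
It remains to replace $1/d_\tau$ and $1/d$ by the asserted quantities after multiplying by $|\varphi'(z)|^m$; the whole content is therefore a two-sided control of $d$ and $d_\tau$ in terms of $1-|z|$.

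For the lower bound on $d_\tau$ I would invoke Lemma~\ref{lem-confmap} directly: it yields
\[
d_\tau=|T(a-b)|\geqslant (1+\tau)(1-\varepsilon)(1-|z|)\,\min_{w\in W_\varepsilon}|\varphi'(w)|,
\]
so that
\[
\frac{|\varphi'(z)|^m}{d_\tau^m}\leqslant
\frac{1}{(1+\tau)^m(1-\varepsilon)^m(1-|z|)^m}
\left(\frac{|\varphi'(z)|}{\min_{w\in W_\varepsilon}|\varphi'(w)|}\right)^{\!m}
\leqslant\frac{R_\varepsilon^m}{(1+\tau)^m(1-\varepsilon)^m(1-|z|)^m},
\]
since $|\varphi'(z)|\leqslant\max_{w\in W_\varepsilon}|\varphi'(w)|$ and $R_\varepsilon$ is exactly the ratio of that max to the min. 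Combining with the bound on $|h^{(m)}(a_\tau)|$ above gives \eqref{eq-est-h}. For the second inequality I would use the trivial lower bound $d=|a-b|=\dist(\varphi(z),\varGamma)\geqslant(1-|z|)\min_{w\in W_\varepsilon}|\varphi'(w)|$ (distortion from the Koebe-type estimate / the mean value inequality applied along the radial segment from $z$ to its nearest point of $\mathbb T$, both lying in $W_\varepsilon$ for $\varepsilon$ small, using $\varphi\in C^1(\overline{\mathbb D})$ with non-vanishing derivative), and the same bookkeeping with $R_\varepsilon$ then yields the claimed estimate for $|\varphi'(z)^m g^{(m)}(\varphi(z))|$.

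The only genuine subtlety — and the step I expect to need a line of care — is making sure that the nearest point $b\in\varGamma$ to $a=\varphi(z)$ actually lies on the small arc $\gamma\subset\varphi(D_\varepsilon\cap\mathbb T)$ where Lemma~\ref{lem-confmap} operates, and that $a\in B_\varepsilon$; this is where one shrinks $\varepsilon$ once more, using that $\varphi$ is a homeomorphism of $\overline{\mathbb D}$ onto $\overline G$, so points of $\mathbb D$ close to $1$ map to points of $G$ whose distance to $\varGamma$ is realized near $\varphi(1)=0$. Everything else is substitution and the elementary inequalities $|\varphi'(z)|\leqslant\max_{W_\varepsilon}|\varphi'|$ and $\min_{W_\varepsilon}|\varphi'|>0$ (the latter from $\varphi\in C^1(\overline{\mathbb D})$, $\varphi'\neq 0$ on $\varGamma$, after possibly decreasing $\varepsilon$). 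No new estimates beyond Lemmas~\ref{lem-solest} and~\ref{lem-confmap} are required.
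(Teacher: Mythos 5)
Your proposal follows essentially the paper's own route: the paper gives no separate proof of this lemma, saying only that it is obtained by combining Lemma~\ref{lem-solest} (with $\omega(f,\cdot)\leqslant 2\|f\|_{\overline{G}}$) and Lemma~\ref{lem-confmap}, which is precisely your argument, and your derivation of \eqref{eq-est-h} is complete and correct.

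The one step that needs repair is your lower bound $d=\dist(\varphi(z),\varGamma)\geqslant(1-|z|)\min_{W_\varepsilon}|\varphi'|$ used for the $g$-estimate: neither of the justifications you sketch delivers it. The mean value inequality along the radial segment bounds $|\varphi(z)-\varphi(z/|z|)|$ from \emph{above}, so it gives an upper bound on $d$, not a lower one; and the Koebe-type estimate $d\geqslant\tfrac14(1-|z|^2)|\varphi'(z)|$ carries a factor $\tfrac14$ which, after being raised to the $m$-th power, produces $4^m$ and cannot be absorbed into a constant $C$ independent of $m$. The cleanest fix is to reuse the triangle-inequality step inside the proof of Lemma~\ref{lem-confmap}: with $c\in\gamma$ the point constructed there one has $|b-c|<\varepsilon|a-c|$, hence $d=|a-b|\geqslant|a-c|-|b-c|\geqslant(1-\varepsilon)|a-c|\geqslant(1-\varepsilon)(1-|z|)\min_{W_\varepsilon}|\varphi'|$, which gives the second estimate with an additional factor $(1-\varepsilon)^{-m}$; this is harmless (the analogous factor is already present in \eqref{eq-est-h}, and only the $h$-estimate is actually used later in the paper). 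Your remark about checking that $a\in B_\varepsilon$ and that the nearest boundary point lies on the small arc near $0$ is appropriate, but note that for \eqref{eq-est-h} it is not needed, since the conclusion of Lemma~\ref{lem-confmap} is already stated for the nearest point $b$, wherever it lies.
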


In fact we need to strengthen the estimates obtained in Lemma~\ref{lem-estim}
in the case where the initial function $f\in C(\overline{G})\cap\mathcal
O_\tau(G)$ is $\mathcal L_\tau$-analytic in some neighborhood of
$\overline{G}$. Namely, the next proposition takes place.

\begin{lemma}\label{lem-estim-reg}
Let $U$ be an open set such that $\overline{G}\subset U$ and let
$f\in\mathcal O_\tau(U)$. Assume that all conditions of
Lemmas~\ref{lem-solest} and~\ref{lem-confmap} are fulfilled. Then for all
$z\in W_\varepsilon=D(1,\varepsilon)\cap\mathbb D$ the estimate is satisfied
\begin{equation}\label{eq-est-h-reg}
\big|\overline{\varphi'(z)}{}^mh^{(m)}(T\varphi(z))\big|\leqslant
Cm!\bigg(\frac{1+\beta(\varepsilon)}{1+\tau}\bigg)^m\dfrac{\rho_m(z)}{(1-|z|)^m}\|f\|_U,
\end{equation}
where $\beta(\varepsilon)\to0$ as $\varepsilon\to0$, and
$$
\rho_m(z)=\bigg(\frac{1-|z|}{R-|z|}\bigg)^m
$$
for some number $R>1$.
\end{lemma}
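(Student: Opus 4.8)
The estimate \eqref{eq-est-h-reg} is a refinement of \eqref{eq-est-h} exploiting that $f$ is $\mathcal L_\tau$-analytic on the \emph{larger} open set $U\supset\overline{G}$, so the Taylor expansion of $h'$ around $T\varphi(z)$ can be integrated over a disk of radius comparable to $R-|z|$ rather than merely $1-|z|$. The plan is to redo the Cauchy-type integral computation from the proof of Lemma~\ref{lem-solest}, but now centered at the point $a=\varphi(z)$ with $z\in W_\varepsilon$, over an ellipse $D_\tau(a,r)$ whose ``radius'' $r$ is allowed to be as large as $\dist(Ta,\partial(TU))$, which is bounded below by a constant multiple of $R-|z|$ once $U$ is fixed with $\overline{G}\subset U$. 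The key point is that on this larger ellipse $f$ is still bounded by $\|f\|_U$, so the boundary integral over $C_\tau(a,r)$ and the lower-order area integral are controlled by $\|f\|_U$ rather than by the modulus of continuity; this produces the factor $\rho_m(z)=\bigl((1-|z|)/(R-|z|)\bigr)^m$, which decays geometrically in $m$ and is the source of the improvement.

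\textbf{Key steps.} First I would fix, via the Koebe distortion theorem and $\varphi'(z)\neq0$ on $\overline{\mathbb D}$ (which holds under the hypotheses of Lemma~\ref{lem-confmap}), the comparison $\dist(\varphi(z),\partial G)\asymp(1-|z|)|\varphi'(z)|$ and, similarly, $\dist(\varphi(z),\partial U)\geqslant c\,(R-|z|)|\varphi'(z)|$ for an appropriate $R>1$ depending only on how much room $U$ leaves around $\overline{G}$ near the point $1\in\mathbb T$; here I use that $\varphi$ extends conformally slightly past $\mathbb T$ near $1$ because $\varGamma$ is smooth there. Second, I would transport these to the $T$-image: since $T$ is a fixed invertible real-linear map with $|Tw|\geqslant(1-\tau)|w|$ (and, by Lemma~\ref{lem-confmap}'s argument, $|T(a-b)|\geqslant(1+\tau)(1-\varepsilon)(1-|z|)\min_{W_\varepsilon}|\varphi'|$ for the nearest boundary point $b$, which explains the $(1+\tau)^{-m}$ and the $1+\beta(\varepsilon)$ with $\beta(\varepsilon)\to0$), I get that the ellipse $D_\tau(a,r)$ with $r\asymp(R-|z|)|\varphi'(z)|$ lies inside $TU$. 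Third, I run the integration-by-parts identity exactly as in Lemma~\ref{lem-solest}: multiply the Taylor series of $h'(\cdot)$ by $(\overline{w}_\tau-\overline{a}_\tau)^{m-1}$, integrate over $D_\tau(a,r)$, use $h'(w_\tau)=-\tau^{-1}\overline\partial f(w)$, integrate by parts to get a boundary term on $C_\tau(a,r)$ plus an area term with exponent $m-2$, and bound $|f|\leqslant\|f\|_U$ on both. This yields $|h^{(m)}(a_\tau)|\leqslant C\,\dfrac{m!}{r^m}\|f\|_U$ with $r\asymp(R-|z|)|\varphi'(z)|$; multiplying by $|\overline{\varphi'(z)}|^m=|\varphi'(z)|^m$ and inserting the lower bound for the effective radius in the $T$-ellipse (carrying the $(1+\tau)$ and $(1+\beta(\varepsilon))$ factors from Lemma~\ref{lem-confmap}) gives precisely \eqref{eq-est-h-reg} after writing $\dfrac{1}{(R-|z|)^m}=\dfrac{\rho_m(z)}{(1-|z|)^m}$.

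\textbf{Main obstacle.} The delicate point is \emph{not} the Cauchy estimate itself but verifying that the large ellipse $D_\tau(\varphi(z),r)$ with $r$ of order $(R-|z|)|\varphi'(z)|$ genuinely sits inside $TU$ \emph{uniformly for $z\in W_\varepsilon$}, including for $z$ near the point $1$ where $|z|\to1$; this requires a uniform lower bound on $\dist(\varphi(z),\partial U)$ in terms of $(R-|z|)|\varphi'(z)|$, which in turn needs the conformal extension of $\varphi$ across $\mathbb T$ near $1$ (so that $\varphi$ and $\varphi'$ are comparable on a one-sided neighborhood) together with a Koebe-type distortion estimate valid up to the boundary. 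Controlling the shape of the $T$-image ellipse — it is a genuine ellipse with axis ratio depending on $\tau$ but with fixed orientation — and confirming its inclusion in $TU$ is where the constant $R>1$ and the admissible size of $\varepsilon$ get pinned down; once that is in place, the rest is the same bookkeeping as in Lemma~\ref{lem-solest}, now with $\|f\|_U$ in place of $\omega(f,\cdot)$.
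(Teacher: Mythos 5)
Your overall strategy --- rerun the integral identity of Lemma~\ref{lem-solest} centered at $a=\varphi(z)$ over a larger ellipse $D_\tau(a,r)$, using that $f$ is $\mathcal O_\tau$ on the bigger set $U$ so that $\|f\|_U$ replaces the modulus of continuity --- is indeed the idea behind the paper's (very terse) argument, which likewise re-applies the machinery of Lemmas~\ref{lem-solest}--\ref{lem-estim} with the extra room provided by $U$, encoded in the radius $R-|z|$. But two points in your write-up are genuine gaps. First, you justify the lower bound $\dist(\varphi(z),\partial U)\gtrsim (R-|z|)|\varphi'(z)|$ by saying that ``$\varphi$ extends conformally slightly past $\mathbb T$ near $1$ because $\varGamma$ is smooth there.'' This is false: a $C^{1,\alpha}$ boundary gives only the Kellogg--Warschawski extension of $\varphi'$ to $\overline{\mathbb D}$ (Theorem~\ref{thm-kelwar}); a holomorphic extension across $\mathbb T$ requires an analytic arc (Schwarz reflection), which is exactly the hypothesis the paper is trying to avoid. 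Fortunately no such extension is needed: since $\overline{G}\subset U$ one has $\dist(\varphi(z),\complement U)\geqslant\dist(\overline G,\complement U)>0$ trivially, and $\varphi'$ is continuous and non-vanishing on $\overline{\mathbb D}$, so the comparability with $R-|z|$ is a matter of choosing $R$, not of extending $\varphi$.

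The more serious gap is the constant. The conclusion \eqref{eq-est-h-reg} is not just ``$Cm!$ times some geometric factor'': the $m$-th power constant must be exactly $\bigl(\tfrac{1+\beta(\varepsilon)}{1+\tau}\bigr)^m(R-|z|)^{-m}$, and this is what the application uses through \eqref{eq-est-beta} (one needs $2\tau(1+\beta(\varepsilon))/(1+\tau)<1$; even an extra factor $2^m$ destroys the argument for $\tau\geqslant1/3$). Your inclusion of the big ellipse in $U$ rests on $|Tw|\geqslant(1-\tau)|w|$, which only yields an admissible $T$-radius of order $(1-\tau)\dist(\overline G,\complement U)$, with no $(1+\tau)$-gain; and the parenthetical appeal to Lemma~\ref{lem-confmap} does not transfer that gain, because that lemma concerns the barrier $\partial G$ at distance $\asymp(1-|z|)$ in the nearly vertical direction, whereas the obstruction $\complement U$ may be approached in an arbitrary (e.g.\ horizontal) direction, where $T$ contracts. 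So as written you obtain $|h^{(m)}|\leqslant Cm!\,c_U^{-m}\|f\|_U$ with a constant $c_U$ unrelated to $(1+\tau)$, which does not imply \eqref{eq-est-h-reg}, and taking the minimum of this bound with \eqref{eq-est-h} still loses a factor $2^m$ in the regime $1-|z|\asymp R-1$. The missing step is an additivity argument: for $b\notin U$ let $p\in\partial G$ be the first crossing of the segment $[a,b]$ with $\partial G$; since $a-p$ and $p-b$ are parallel, $|T(a-b)|=|T(a-p)|+|T(p-b)|\geqslant \tfrac{1+\tau}{1+\beta_1(\varepsilon)}(1-|z|)\min_{W_\varepsilon}|\varphi'|+(1-\tau)\dist(\partial G,\complement U)$, where the first term is the Lemma~\ref{lem-confmap} bound (strengthened to all boundary points, which the near-flatness of $\varGamma$ on $W_\varepsilon$ permits). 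Defining $R-1$ so as to absorb the second summand, this equals $\tfrac{1+\tau}{1+\beta_1(\varepsilon)}\min_{W_\varepsilon}|\varphi'|\,(R-|z|)$, and after multiplying by $|\varphi'(z)|^m$ and using $\max_{W_\varepsilon}|\varphi'|/\min_{W_\varepsilon}|\varphi'|=1+o(1)$ one gets precisely \eqref{eq-est-h-reg}; without some argument of this kind the stated exponential constant, hence the lemma, is not proved.
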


To verify this lemma we need to apply Lemma~\ref{lem-estim} considering a
conformal mapping from the disk $D(0,R)$ for some $R>1$ onto $G$ instead of
$\varphi$ and taking into account the fact that
$R_\varepsilon\big/(1-\varepsilon)$ in this case is
$C_1(1+\beta(\varepsilon))$ with $\beta(\varepsilon)\to0$ as $\varepsilon\to
0$ and with certain constant $C_1>0$.

The next simple statement may be readily verified using Green's formula and
integration by parts taking into account the facts that $1-|z|^2=0$ for
$z\in\mathbb T$ and $\overline\partial(1-|z|^2)=-z$, see \cite{Maz2009sbm},
formula~(2.3).
\begin{lemma}\label{lem-green}
Let $F\in C^\infty(D(0,R))$ for some $R>1$. Then for any $k,N\in\mathbb N$,
one has
\begin{equation}\label{eq-m1}
\int_{\mathbb D}(1-|z|^2)^{k-1}\,F(z)\,z^{N-k+1}\,dm_2(z)=
\frac1k\int_{\mathbb D}(1-|z|^2)^k\,z^{N-k}\,\overline\partial F(z)\,dm_2(z).
\end{equation}
\end{lemma}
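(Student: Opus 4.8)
The plan is to recognize the weight $(1-|z|^2)^{k-1}z^{N-k+1}$ on the left-hand side of \eqref{eq-m1} as a pure $\overline\partial$-derivative and then integrate by parts. Since $\overline\partial(1-|z|^2)=-z$, the chain rule gives $\overline\partial\big((1-|z|^2)^k\big)=-k\,z\,(1-|z|^2)^{k-1}$, so that
\[
(1-|z|^2)^{k-1}\,z^{N-k+1}=-\frac1k\,z^{N-k}\,\overline\partial\big((1-|z|^2)^k\big)=-\frac1k\,\overline\partial\big(z^{N-k}(1-|z|^2)^k\big),
\]
the last equality using $\overline\partial(z^{N-k})=0$. (This is where one uses that $N\geqslant k$ — the only regime in which the lemma is applied; then $z^{N-k}$ is a polynomial, hence holomorphic, and every integrand below is continuous up to $\mathbb T$.)

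Now multiply by $F$, integrate over $\mathbb D$, and set $G(z)=z^{N-k}(1-|z|^2)^k$. Then $\overline\partial(FG)=G\,\overline\partial F+F\,\overline\partial G$, and the $\overline\partial$-form of Green's (Stokes') theorem, $\int_{\mathbb D}\overline\partial u\,dm_2=\frac1{2i}\int_{\mathbb T}u\,dz$, applied to $u=FG$, gives
\[
\int_{\mathbb D}\big(G\,\overline\partial F+F\,\overline\partial G\big)\,dm_2(z)=\frac1{2i}\int_{\mathbb T}F(z)G(z)\,dz=0,
\]
because $G$ vanishes identically on $\mathbb T$ (here $k\geqslant1$ is used). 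Hence $\int_{\mathbb D}F\,\overline\partial G\,dm_2=-\int_{\mathbb D}G\,\overline\partial F\,dm_2$, and combining this with the displayed factorization of the weight yields
\[
\int_{\mathbb D}(1-|z|^2)^{k-1}F(z)z^{N-k+1}\,dm_2(z)=-\frac1k\int_{\mathbb D}F\,\overline\partial G\,dm_2(z)=\frac1k\int_{\mathbb D}z^{N-k}(1-|z|^2)^k\,\overline\partial F(z)\,dm_2(z),
\]
which is exactly \eqref{eq-m1}.

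I do not expect a genuine obstacle here, in line with the paper's remark that the statement is ``readily verified''. The hypothesis $F\in C^\infty(D(0,R))$ with $R>1$ makes $FG$ smooth on a neighbourhood of $\overline{\mathbb D}$, so Stokes' theorem applies with no boundary-regularity concern; the only two points needing a word of care are (i) that $N\geqslant k$, so $z^{N-k}$ is holomorphic and the origin contributes no singular term to the $\overline\partial$-computation, and (ii) that the boundary integral drops out thanks to the factor $(1-|z|^2)^k$ with $k\geqslant1$. Equivalently, one can present the whole argument as a single integration by parts based on the identity $\overline\partial(1-|z|^2)=-z$, which is the route indicated in \cite{Maz2009sbm}, formula~(2.3).
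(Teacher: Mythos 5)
Your proof is correct and follows exactly the route the paper indicates for this lemma: Green's formula together with integration by parts, using $\overline\partial(1-|z|^2)=-z$ and the vanishing of $(1-|z|^2)^k$ on $\mathbb T$ to kill the boundary term. Your explicit remark that the argument is carried out in the regime $N\geqslant k$ (so that $z^{N-k}$ is holomorphic) is consistent with how the lemma is used in the proof of Theorem~\ref{thm-mainest}, where $n$ is taken large.
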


\begin{proof}[Proof of Theorem~\ref{thm-mainest}]

According to Remark~\ref{rem-domain} one may (and shall) assume that $G$ and
$\varphi$ satisfy all conditions of Lemma~\ref{lem-confmap}. Thus take
$\varepsilon$ from this lemma and assume that $\zeta=1$.

Take a function $f\in C(\overline{G})\cap\mathcal O_\tau(G)$. According to
\eqref{eq-soltau-nse} one has $f(z)=h(z_\tau)+g(z)$, $z\in G$, where $g$ and
$h$ are holomorphic functions in $G$ and $TG$, respectively. Let us assume
for a moment that $f\in\mathcal O_\tau(U)$ for some open set $U$ that
contains $\overline{G}$, so that the functions $g$ and $h$ are holomorphic in
$U$ and $TU$, respectively. We will argue in the frameworks of this
assumption. At the last step of the proof it remains to apply the
regularization arguments based in the fact that the initial function $f$ can
be approximated uniformly on $\overline{G}$ by functions $\mathcal
L_\tau$-analytic in neighborhoods of $\overline{G}$ (each function in its own
neighborhood).

In what follows we will use the following notations. For $k\in\mathbb N$ we
put $\mu_k(z)=\overline{\varphi'(z)}^k$, $z\in\overline{\mathbb D}$. We will
write $A\lesssim B$ if $A\leqslant CB$ for some number $C>0$ which may depend
on $\|\varPsi_\varepsilon\|$, $\|\overline\partial\varPsi_\varepsilon\|$,
$\min_{z\in\overline{\mathbb D}}|\varphi'(z)|$. Similarly, all constants in
the usual ``O-big'' notation $\mathop{\mathbb O}(\cdot)$ may (and will)
depend on these quantities. Moreover, for $k\in\mathbb N$ we put
$$
F_k(z):=\frac{(-\tau)^k}{(k-1)!}\varPsi_\varepsilon(z)\,\mu_k(z)\,h^{(k)}(T\varphi(z)).
$$

Direct computations based on the standard Green's formula give that
\begin{multline}\label{eq-est-zerostep}
\mathcal M^*_{n,\varepsilon,\zeta}(f)= \int_{\mathbb
T}\varPsi_\varepsilon(z)\,f(\varphi(z))\,z^n\,dz= 2i\int_{\mathbb
D}z^n\overline\partial[\varPsi_\varepsilon(z)f(\varphi(z))]\,dm_2(z)=\\
=2i\int_{\mathbb
D}z^nf(\varphi(z))\,\overline\partial\varPsi_\varepsilon(z)\,dm_2(z)-
2i\int_{\mathbb D}z^nF_1(z)\,dm_2(z).
\end{multline}

It is clear that
$$
\bigg|\int_{\mathbb
D}z^nf(\varphi(z))\,\overline\partial\varPsi_\varepsilon(z)\,dm_2(z)\bigg|\leqslant
\dfrac{2\pi\|\varPsi'_\varepsilon\|\,\|f\|}{n+2},
$$
and we need to estimate only the second summand in the right-hand side of
\eqref{eq-est-zerostep}. This estimate requires much more delicate
considerations. The main idea how to estimate the quantity
$\displaystyle\int_{\mathbb D}z^nF_1(z)\,dm_2(z)$, is to apply
Lemma~\ref{lem-green} to the functions $F_k$, $k\in\mathbb N$, consequently.
Take an arbitrary $m\in\mathbb N$. Since
$$
\overline\partial h^{(k)}(T\varphi(z))=-\tau\overline{\varphi'(z)}h^{(k+1)}(T\varphi(z))
$$
and $\mu_k(z)\overline{\varphi'(z)}=\mu_{k+1}(z)$, we have
\begin{align*}
\int_{\mathbb D}F_1(z)z^n\,dm_2(z)=&\int_{\mathbb D}(1-|z|^2)z^{n-1}\,\overline\partial F_1(z)\,dm_2(z)\\
=&-\tau\int_{\mathbb D}(1-|z|^2)z^{n-1}\,\overline\partial[\varPsi_\varepsilon(z)\mu_1(z)]
h'(T\varphi(z))\,dm_2(z)\\
&+\int_{\mathbb D}(1-|z|^2)z^{n-1}\,F_2(z)\,dm_2(z)\\
=&\cdots\;\cdots\;\cdots\\
=&\sum_{k=1}^{m}\frac{(-\tau)^k}{k!}\int_{\mathbb D}(1-|z|^2)^kz^{n-k}
\overline\partial[\varPsi_\varepsilon(z)\mu_k(z)]h^{(k)}(T\varphi(z))\,dm_2(z)\\
&+\int_{\mathbb D}(1-|z|^2)^mz^{n-m}\,F_{m+1}(z)\,dm_2(z).
\end{align*}
Let us estimate the integrals
$$
I_k=\frac{(-\tau)^k}{k!}\int_{\mathbb D}(1-|z|^2)^kz^{n-k}
\overline\partial[\varPsi_\varepsilon(z)\mu_k(z)]h^{(k)}(T\varphi(z))\,dm_2(z),
\quad  k\in\mathbb N.
$$
Observe that for $p\in\mathbb N$ and $\nu\in(0,1)$ it holds
\begin{equation}\label{eq-tech1}
\int_0^1\dfrac{r^p\,dr}{(1-r)^{1-\nu}}\approx\dfrac1{\nu p^\nu}.
\end{equation}

Let now $\beta(\varepsilon)$ be taken from Lemma~\ref{lem-estim-reg}, so that
$\beta(\varepsilon)\to0$ as $\varepsilon\to0$. Then for sufficiently small
$\varepsilon$ we have
\begin{equation}\label{eq-est-beta}
\frac{2\tau(1+\beta(\varepsilon))}{1+\tau}<\tau_1<1
\end{equation}
for some $\tau_1\in(0,1)$ which may depend only on $\tau$ and $G$ (of course
implicitly, via $\varphi$, $\varepsilon$, etc.). Since
$(1-|z|^2)^k\leqslant2^k(1-|z|)^k$ for $z\in\mathbb D$, then using
\eqref{eq-est-h-reg} and \eqref{eq-est-beta} we have
\begin{equation}\label{eq-tech2}
|I_k|\leqslant C\tau^k\bigg(\frac{1+\beta(\varepsilon)}{1+\tau}\bigg)^k 2^k
\int_{D(z_0,\varepsilon)}(1-|z|)^k\frac{k}{\varphi'(z)}\frac{\rho_k(z)}{(1-|z|)^{k+1-\alpha}}|z|^{n-k}\,dm_2(z).
\end{equation}

For $k<n$ this inequality together with \eqref{eq-tech1} gives that
$$
|I_k|=\mathop{\mathbb O}\Big(\frac{k\tau_1^k}{(n-k)^\alpha}\Big).
$$
For $k\geqslant n$ in order to estimate $I_k$ we will use next arguments. For
$z\in W_\varepsilon=D(1,\varepsilon)\cap\mathbb D$ we have
$r^{n-m}\leqslant(1-\varepsilon)^{n-k}$. Moreover, for sufficiently small
$\varepsilon$ it holds that
$$
\tau_2=\frac{\tau_1}{1-\varepsilon}<1.
$$
So that for $k\geqslant n$ we have $|I_k|=\mathop{\mathbb O}(k\tau_2^k)$.

It can be readily checked that
$$
\sum_{k=1}^{n-1}\frac{k\tau_1^k}{(n-k)^\alpha}\lesssim \rho(n,\alpha,\tau),
$$
where
$$
\rho(n,\alpha,\tau_1)=\frac1{n^{\alpha/2}}+
\frac{n^{\alpha/4}\tau_1^{n^{\alpha/4}}}{1-\tau_1}+
\frac{\tau_1^{n^{\alpha/4}+1}}{(1-\tau_1)^2}.
$$
Indeed it is enough to split the sum being estimate into two sums (where the
first sum is taken over $k$ running from $0$ to the integer part of the
number $n^{\alpha/4}$, while the second one is taken over remaining  values
of $k$) and to estimate directly both sums obtained. Moreover,
$$
\sum_{k=n}^{\infty}k\tau_2^k=
\frac{\tau_2^n}{1-\tau_2}\Big(n+\frac{\tau_2}{1-\tau_2}\Big).
$$
Since the last quantity tends to zero as $n\to\infty$, and since
$\rho(n,\alpha,\tau_1)\to0$ as $n\to\infty$, then the integrals
$\displaystyle\int_{\mathbb D}(1-|z|^2)^mz^{n-m}\,F_{m+1}(z)\,dm_2(z)$ can be
made arbitrary small by taking $m$ large enough. It gives, finally, that
\eqref{eq-mainest} takes place and the proof of Theorem~\ref{thm-mainest} is
completed.
\end{proof}

The remaining estimate \eqref{eq-Mn-estontp} for $M_n=\mathcal
M_{n,\varepsilon,\zeta}$ is the consequence of the following observation: for
$P(z)=\sum_{k=-\nu}^{\nu}c_kz^k$ with integer $\nu>0$ and $c_k\in\mathbb C$,
$-\nu\leqslant k\leqslant \nu$, we have
$$
|\mathcal M_{n,\varepsilon,\zeta}(P)|=
|\mathcal M_{n-\nu,\varepsilon,\zeta}(z^\nu P)|\leqslant A(n-\nu)^{-\alpha/2}\|P\|_{\mathbb T}
$$
in view of \eqref{eq-mainest}, because for $Q=z^\nu P$ we have
$Q\circ\varphi^{-1}\in C(\overline{G})\cap\mathcal O_\tau(G)$ since
$Q\circ\varphi^{-1}$ is holomorphic in $G$.

Using the family of functionals $M_n=\mathcal M_{n,\varepsilon,\zeta}$
constructed in this section and follow the line of reasoning presented at the
end of Section~\ref{s:aux} we arrive to the complete proof of
Theorem~\ref{thm-dirprob}.

At the end of this section let us note that in the bianalytic case (that is
for $\tau=0$) the estimate \eqref{eq-mainest} can be improved a bit. Namely,
for every function $f\in C(\overline{G})\cap \mathcal
O(G,\overline\partial^2)$ and for all sufficiently large integer $n$, it
holds
$$
\bigg|\int_{\mathbb T}f(\varphi(z))\,z^n\,dz\bigg|\leqslant
A\frac{\|f\|_{\overline{G}}}{n^\alpha}.
$$
The proof of this estimate may be obtained following the same scheme that was
used in the proof of the estimate \eqref{eq-mainest}, but in this (in view of
special algebraic structure of bianalytic functions) it is enough to use
Green's formula only twice and estimate thereafter the obtained integrals
directly using \eqref{eq-confmapest} and applying Lemma~3 from
\cite{Car1985jat} instead of Lemma~\ref{lem-estim}.


\section{Outline of the proof of the second statement in Theorem~\ref{thm-mazalov}}
\label{s:example}

In this section we are going to present a schematic outline of the proof of
the following proposition which is the second statement of
Theorem~\ref{thm-mazalov}.

\begin{proposition}\label{pro-s:emx-1}
There exists a Jordan domain $G$ with Lipschitz boundary such that $G$ is
$\overline\partial^2$-regular.
\end{proposition}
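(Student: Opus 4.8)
The plan is to construct an explicit Jordan domain $G$ with Lipschitz boundary together with a method for solving the $\overline\partial^2$-Dirichlet problem for \emph{every} continuous boundary datum. The key structural fact is that a bianalytic function $f\in\mathcal O_0(G)$ has the form $f(z)=\overline{z}f_1(z)+f_0(z)$ with $f_0,f_1$ holomorphic in $G$; so the $\overline\partial^2$-Dirichlet problem with datum $\psi\in C(\partial G)$ amounts to finding holomorphic $f_0,f_1$, continuous up to $\partial G$, with $\overline{z}f_1(z)+f_0(z)=\psi(z)$ on $\partial G$. Since $G$ is a Jordan domain, classical potential theory (Theorem~A, the Lebesgue result, plus Carath\'eodory's theorem) lets us solve the \emph{harmonic} Dirichlet problem; the difficulty is that we must realize the solution by the special ansatz above rather than by an arbitrary harmonic function. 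I would therefore first reduce the problem, via a conformal map $\varphi\colon\mathbb D\to G$ (extended to a homeomorphism of the closures by Carath\'eodory), to solving on $\mathbb T$ the transported equation, where the unknowns become holomorphic functions on $\mathbb D$ and the coefficient $\overline{z}$ on $\partial G$ is replaced by $\overline{\varphi(e^{i\vartheta})}$, a continuous function on $\mathbb T$.

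Next I would analyze the resulting boundary relation on $\mathbb T$ as a Riemann--Hilbert-type / singular-integral problem: writing $f_j\circ\varphi=H_j$ holomorphic on $\mathbb D$, we need $\overline{\varphi(z)}H_1(z)+H_0(z)=\psi(\varphi(z))$ on $\mathbb T$. On $\mathbb T$ one has $\overline{\varphi(z)}=\overline{\varphi(1/\bar z)}$, and the point of choosing $G$ (rather than taking it arbitrary) is to make $\overline{\varphi(z)}$, restricted to $\mathbb T$, extend to a function with a convenient structure --- for instance so that the associated Toeplitz/Hankel operator governing solvability of the above equation is invertible or at least surjective onto $C(\mathbb T)$ modulo a controllable defect. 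Concretely I expect the construction to take $\partial G$ to be a carefully tuned Lipschitz curve (e.g. built from circular or linear arcs so that $\overline z$ on $\partial G$ admits a one-sided analytic/antianalytic continuation of a controlled nature), which is precisely the sort of special, not-too-smooth domain the introduction advertises: smoothness of $\partial G$ is the \emph{enemy} here (by Theorem~\ref{thm-mazalov}, part~1, and Corollary~\ref{cor-anarc}), so the Lipschitz corners are used essentially, to break the obstruction coming from the Luzin--Privalov / Schwarz-function argument.

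With the domain fixed, the core step is an existence argument: given $\psi\in C(\partial G)$, produce $f_0,f_1$. I would do this by a two-stage scheme: first solve the problem for a dense subclass (say $\psi$ that are boundary values of functions rational with poles off $\overline G$, or trigonometric polynomials after transport), where the relation $\overline{\varphi}H_1+H_0=\Psi$ can be solved by explicit projection onto holomorphic parts; then pass to general $\psi$ by a uniform a priori estimate $\|f_0\|_{\overline G}+\|(\cdot)\,f_1\|_{\overline G}\lesssim\|\psi\|_{\partial G}$ together with density. The a priori bound is where the special geometry of $G$ is cashed in: it is exactly a \emph{weak maximum modulus principle} for bianalytic functions on $G$ (a maximum principle with a domain-dependent constant), which for generic domains fails but which the construction is designed to guarantee. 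I expect the main obstacle to be verifying this uniform estimate / surjectivity: one must show that the singular integral operator attached to $\partial G$ on $L^\infty$ or $C$ is bounded below on the relevant subspace, and this is a delicate harmonic-analysis computation tied to the precise shape of $\partial G$ (the Lipschitz constant and the geometry of the corners). Once that estimate is in hand, a routine Banach-space limiting argument upgrades solvability from the dense subclass to all of $C(\partial G)$, and transporting back by $\varphi^{-1}$ yields $f_\psi\in C(\overline G)\cap\mathcal O_0(G)$ with $f_\psi|_{\partial G}=\psi$, proving that $G$ is $\overline\partial^2$-regular.
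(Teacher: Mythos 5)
Your scheme has two concrete defects, each of which is fatal on its own. First, the a~priori estimate you intend to ``cash in'' --- a weak maximum modulus principle for bianalytic functions on $G$, i.e.\ $\|f\|_{\overline G}\leqslant C(G)\|f\|_{\partial G}$ for all $f\in C(\overline G)\cap\mathcal O(G,\overline\partial^2)$ --- cannot hold in \emph{any} Carath\'eodory domain (in particular in any Jordan domain, Lipschitz or not): this is exactly what Section~\ref{s:approx} establishes for every $\mathcal L\in\NSE$, and for $\overline\partial^2$ there is even a one-line obstruction given there: such an estimate forces uniqueness in the $\overline\partial^2$-Dirichlet problem, and regularity plus uniqueness is contradicted by the datum $1/(z-a)|_{\partial G}$, since $f(z)(z-a)-1$ would be a bianalytic function vanishing on $\partial G$ but not at $a$. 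So no choice of Lipschitz geometry can be ``designed to guarantee'' your estimate, and the dense-subclass-plus-uniform-bound-plus-limit argument cannot be run in this form. (A bounded \emph{linear} solution operator on a dense subspace would suffice for the limiting step, but you give no construction of one, and producing it is essentially the whole problem.) Second, your proposed geometry --- a boundary ``built from circular or linear arcs'' with corners --- is excluded at the outset by Corollary~\ref{cor-anarc}: any free analytic subarc of $\partial G$ (and circular or linear arcs are analytic) already makes $G$ non-regular for every $\mathcal L_\tau$, $\tau\in[0,1)$, via the Schwarz-function/Luzin--Privalov argument; corners do not break that obstruction, which is local on the arc. In fact, by part~1 of Theorem~\ref{thm-mazalov} the conformal map of any regular example must satisfy $\int_{\mathbb D}|\varphi''|\,dm_2=\infty$, so the boundary has to be non-analytic in an essentially everywhere sense.

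For comparison, the paper's construction avoids both issues: it starts from a lacunary series $\psi(z)=z+\sum_k z^{m_k}/(km_k)$ (not univalent, but with $\int_{\mathbb D}|\psi''|\,dm_2=\infty$), uses the representation result of Proposition~\ref{pro-s:emx-2} (for every $f\in C(\mathbb T)$ there are holomorphic $\varPhi_1,\varPhi_2$ with $\varPhi_1\overline\psi-\varPhi_2$ bounded by $2\|f\|_{\mathbb T}$ and equal to $f$ on $\mathbb T$), then carves out a Lipschitz domain $\Omega$ by Privalov ice-cream cones and lune-type corrections $\psi_I$ so that the perturbed map $\varphi=\psi+\sum_I\psi_I$ becomes univalent and Lipschitz, and finally verifies regularity not through exact solvability with a norm bound but through the Rudin--Carleson-based criterion of Proposition~\ref{pro-s:emx-3}, which only asks for \emph{approximate} solutions, uniformly bounded by $2$, on closed sets $E_n\subset\partial G$ exhausting the boundary up to a set of zero length. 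That weaker, interpolation-theoretic notion of solvability is precisely what replaces the (false) maximum-principle estimate in your plan.
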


This result was obtained in \cite{Maz2009sbm}, and its proof is very involved
both substantively and technically. The construction of the desired domain
$G$ is based on lacunary series technique, on variational principles of
conformal mappings, and on Rudin--Carleson theorem about interpolation peak
sets for continuous holomorphic functions.

The main aim of this section is to highlight the main steps of the
construction of $\overline\partial^2$-regular domain $G$, and to show the way
how the main difficulties of the corresponding construction may be overcome.

Let $G$ be a Jordan domain and $\varphi$ be some conformal mapping from
$\mathbb D$ onto $G$, and assume that $\varphi$ is already extended to the
eponymous homeomorphism from $\overline{\mathbb D}$ onto $\overline{G}$
according to the Carath\'eodory extension theorem. In order to satisfy the
property that $G$ is $\overline\partial^2$-regular we need to have
\begin{equation}\label{eq-s:exm-1}
\int_{\mathbb D}|\varphi''(z)|\,dm_2(z)=\infty.
\end{equation}
Indeed, otherwise the space of functions belonging to $C(\partial G)$ which
are restrictions to $\partial G$ of some functions belonging to the space
$C_{\overline\partial^2}(\overline{G})$ is a Baire first category set. This
fact is proved in \cite{Maz2009sbm}, Theorem~1; its proof may be obtained
following the same scheme that was used above to prove
Theorem~\ref{thm-dirprob} (see also the latter paragraph of
Section~\ref{s:proof}), but the proof in the bianalytic case turns out to be
rather simpler in view of some special properties bianalytic functions
possessed in contrast to $\mathcal L_\tau$ analytic ones, $\tau\in(0,1)$.

Before constructing the univalent function $\varphi$ satisfying the above
mentioned conditions, let us make one auxiliary construction. We need to
construct the function $\psi\in C(\overline{\mathbb D})$ which is holomorphic
in $\mathbb D$, such that $\psi'\in H^2(\mathbb D)$ (this condition is weaker
than the univalence one) and for which the condition \eqref{eq-s:exm-1} is
satisfied. Here $H^p(\mathbb D)$, $p>0$, is the standard Hardy spaces in the
unit disk. Let now $\psi$ be an arbitrary function that has the form
\begin{equation}\label{eq-s:exm-2}
\psi(z)=z+\sum_{k=k_0}\frac{z^{m_k}}{km_k},
\end{equation}
where $m_k$, $k\geqslant k_0$, are positive integers such that the lacunary
conditions are fulfilled
$$
\frac{m_{k+1}}{m_k}>2,\qquad \sum_{n=2}^{\infty}\sum_{k=1}^{n-1}\frac{m_k}{m_n}<\infty.
$$
It is clear that $\psi\in C(\overline{\mathbb D})$ and the condition
\eqref{eq-s:exm-1} is an immediate consequence of the fact that the series
$\sum_{k=1}^{\infty}k^{-1}$ diverges. Unfortunately, for functions of the
form \eqref{eq-s:exm-2} the condition \eqref{eq-s:exm-1} forbids completely
the univalence of $\psi$. But the following important results takes place
(see \cite{Maz2009sbm}, Lemma~3.2 and Theorem~3).

\begin{proposition}\label{pro-s:emx-2}
For every function $f\in C(\mathbb T)$ and for every function $\psi$ of the
form \eqref{eq-s:exm-2} there exists a couple $(\varPhi_1,\varPhi_2)$ of
functions holomorphic in $\mathbb D$ such that the function
$F_f=\varPhi_1\overline\psi-\varPhi_2$ is extended continuously to $\mathbb
T$ and satisfies the conditions $\sup_{z\in\mathbb D}|F_f(z)|\leqslant
2\|f\|_{\mathbb T}$ and $F_f=f$ on $\mathbb T$.
\end{proposition}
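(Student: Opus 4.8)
The plan is to treat the operator $f\mapsto F_f$ as an explicit linear interpolation operator built from the Cauchy/Poisson transform and a correction exploiting the lacunary structure of $\psi$. First I would reduce to the case $f\in C(\mathbb T)$ with Fourier expansion $f(e^{i\vartheta})=\sum_{n\in\mathbb Z}\widehat f(n)e^{in\vartheta}$. A bianalytic-type extension of $f$ off $\mathbb T$ must have the form $F=\varPhi_1\overline\psi-\varPhi_2$ with $\varPhi_1,\varPhi_2$ holomorphic in $\mathbb D$; on $\mathbb T$ we have $\overline\psi(e^{i\vartheta})=\overline{e^{i\vartheta}}+\sum_k\frac{\overline{e^{i\vartheta}}^{\,m_k}}{k m_k}$, i.e. $\overline\psi|_{\mathbb T}$ is an explicit function whose Fourier support lies in $\{-1\}\cup\{-m_k\}$. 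The boundary equation $\varPhi_1\overline\psi-\varPhi_2=f$ then becomes, comparing Fourier coefficients of negative index, a linear system for the coefficients of $\varPhi_1$; the lacunarity condition $m_{k+1}/m_k>2$ guarantees that the frequencies $-1,-m_1,-m_2,\dots$ appearing in $\overline\psi$ together with the shifts produced by multiplying by $\varPhi_1$ do not overlap in an uncontrolled way, so the system is (essentially) triangular and solvable coefficient-by-coefficient.

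Concretely I would set $\varPhi_1(z)=\sum_{j\ge0}a_j z^j$, read off that the negative Fourier coefficients of $\varPhi_1\overline\psi$ are $\widehat f(-1)$-type data, and solve: $a_0$ is determined by the coefficient at frequency $-1$, then the coefficient at frequency $-m_k+j$ for small $j$ forces the higher $a_j$, using that $\sum_k m_k/m_n$-type sums converge so the resulting series for $\varPhi_1$ converges and defines a genuine holomorphic function in $\mathbb D$; then $\varPhi_2$ is defined by $\varPhi_2=\varPhi_1\overline\psi-f$ restricted to nonnegative frequencies (this is automatically holomorphic once the negative-frequency part has been killed by the choice of $\varPhi_1$). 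Continuity of $F_f$ up to $\mathbb T$ follows because $\psi\in C(\overline{\mathbb D})$, $\varPhi_1$ extends continuously (its Taylor coefficients are absolutely summable thanks to the second lacunary condition), and hence $\varPhi_2$ does too. The norm bound $\sup_{\mathbb D}|F_f|\le 2\|f\|_{\mathbb T}$ is then obtained by estimating the operator $f\mapsto F_f$: the leading term reproduces $f$ via a Poisson-type average (contributing $\|f\|_{\mathbb T}$), and the lacunary tail contributes a factor controlled by $\sum_k\frac1{k m_k}\cdot(\text{frequency bookkeeping})$, which by the convergence hypotheses can be made as small as one wishes by choosing $k_0$ large, so in particular $\le\|f\|_{\mathbb T}$.

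The main obstacle I anticipate is the norm estimate $\sup_{z\in\mathbb D}|F_f|\le 2\|f\|_{\mathbb T}$ with the \emph{uniform} constant $2$, independent of $f$: solving the interpolation system is straightforward formal algebra, but showing that the solution operator is bounded on $C(\mathbb T)$ with small overhead requires carefully pairing each "new" frequency created by multiplication by $\varPhi_1$ against the gaps in the lacunary spectrum of $\overline\psi$, and summing the geometric-type series $\sum_{k\ge k_0}\sum_{j< m_{k+1}-m_k}|a_j|\,\|z^{m_k}\|/(k m_k)$ and the cross terms $\sum_n\sum_{k<n} m_k/m_n$ that appear in the $\varPhi_2$ part. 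This is exactly where the precise form of the two lacunary conditions is used, and it is the technical heart referenced in \cite{Maz2009sbm}, Lemma~3.2 and Theorem~3; I would organize it by first proving $\|\varPhi_1\|_{H^\infty}\lesssim\|f\|_{\mathbb T}$ with constant close to $1$, then bounding the contribution of the lacunary correction separately and absorbing it into the constant $2$ by enlarging $k_0$.
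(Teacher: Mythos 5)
There is a genuine gap, and it sits exactly where you place the technical heart of your argument: the claim that one can arrange $\|\varPhi_1\|_{H^\infty}\lesssim\|f\|_{\mathbb T}$ (indeed with absolutely summable Taylor coefficients), from which you then derive both the continuity of $F_f$ up to $\mathbb T$ and the bound $\sup_{\mathbb D}|F_f|\leqslant2\|f\|_{\mathbb T}$. This is impossible. Writing $\varPhi_1(z)=\sum_{j\geqslant0}a_jz^j$ and comparing Fourier coefficients on $\mathbb T$, the coefficient of $z^{-n}$, $n\geqslant1$, of $\varPhi_1\overline\psi$ equals $a_0$ (only when $n=1$) plus $\sum_{k\colon m_k\geqslant n}a_{m_k-n}\big/(km_k)$. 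Hence if $|a_j|\leqslant M$ for all $j$, then for $m_{K-1}<n\leqslant m_K$ the lacunarity gives $|\widehat f(-n)|\leqslant CM\big/(Km_K)$. Taking the continuous function $f(e^{i\vartheta})=\sum_{K}K^{-2}e^{-im_K\vartheta}$ forces $M\geqslant m_K\big/(CK)\to\infty$, so no bounded $\varPhi_1$ (let alone one with summable coefficients) can solve the problem for general $f$. This is precisely what the remark following Proposition~\ref{pro-s:emx-2} records: $\varPhi_1$ and $\varPhi_2$ separately do not belong to any $H^p(\mathbb D)$, $p>0$, and only the combination $F_f=\varPhi_1\overline\psi-\varPhi_2$ is bounded and continuous up to $\mathbb T$. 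Since your continuity argument and your norm estimate both proceed through separate control of $\varPhi_1$ and $\varPhi_2$ (and through defining $\varPhi_2$ as an analytic projection, which is in any case unbounded on $C(\mathbb T)$ and on $L^\infty$), the proposal collapses at this step.

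Two further points. First, the coefficient system is not ``essentially triangular'': the equation at frequency $-n$ involves $a_{m_k-n}$ for \emph{every} $k$ with $m_k\geqslant n$, and each unknown $a_j$ enters infinitely many equations (one for each $k$ with $m_k>j$), so there is no coefficient-by-coefficient solution; one must genuinely choose how to distribute $\widehat f(-n)$ among the scales $m_k$, and controlling the cancellation between the unbounded pieces $\varPhi_1\overline\psi$ and $\varPhi_2$ is exactly the content of Lemma~3.2 and Theorem~3 of \cite{Maz2009sbm}, which the present paper cites rather than reproves. Second, $k_0$ is part of the given function $\psi$ in \eqref{eq-s:exm-2}, so ``choosing $k_0$ large'' to absorb the error into the constant $2$ is not available: the estimate must hold for every $\psi$ of that form with its $k_0$ fixed.
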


In connection to this proposition one ought to note that the functions
$\varPhi_1$ and $\varPhi_2$ separately do not belong to any of the spaces
$H^p(\mathbb D)$ as $p>0$, and only the function $F_f$ possesses the good
properties mentioned above.

In spite of the circumstance that the function $\psi$ from \eqref{eq-s:exm-2}
is never univalent in $\mathbb D$ (see, for instance, \cite{Pom1975book},
Section~5.4), Proposition~\ref{pro-s:emx-2} is an important ingredient of the
construction of domain $G$ from Proposition~\ref{pro-s:emx-1}. The desired
domain $G$ may be constructed as an image a certain Lipschits domain
$\Omega\subset\mathbb D$ under conformal mapping by univalent and Lipschitz
in $\Omega$ function $\varphi$, where $\Omega$ is obtained as a small
perturbation of $\mathbb D$, while $\varphi$ is obtained as a small
perturbation of $\psi$ with respect to the $L^2$-norm on the boundary.

The desired domain $\Omega$ is constructed as a kernel of a decreasing
sequence of Jordan domains $\Omega_j$, $j=0,1,2\ldots$, where
$\Omega_0=\mathbb D$, with uniform estimate of Lipschitz constants of their
boundaries. Let us describe the first step of this construction. The domain
$\Omega_1$ is obtained from $\Omega_0=\mathbb D$ using the celebrated
Privalov's ice-cream cone construction, see \cite{Koo1998book}, Chapter~III,
Section~D. Indeed, the $L^2$-norm of the function $\psi'-1$ on $\mathbb T$
can be made arbitrary small together with the quantity
$\sum_{k=k_0}^{\infty}k^{-2}$; the same takes place for $L^2$-norm of the
corresponding non-tangential maximal function. Thus, for a given $\delta>0$
and for sufficiently large $k_0=k_0(\delta)$ there exists a domain
$\Omega_1=\Omega_0\setminus\bigcup_{\alpha\in A}T_\alpha$, where $A$ is at
most countable set of indices and $T_\alpha$, $\alpha\in A$, are mutually
disjoint closed isosceles triangles with the bases on $\partial\Omega_0$ such
that the function $\psi'$ is continuous on $\overline\Omega_1$ and such that
everywhere on $\overline\Omega_1$ it holds $|\psi'(z)-1|<\delta$. Moreover,
the triangles $T_\alpha$, $\alpha\in A$ may be chosen such that the angles at
the base of every $T_\alpha$ are less than $\delta$, and the sum of
perimeters of all $T_\alpha$, $\alpha\in A$ is also less than $\delta$.

The part of the boundary $\partial\Omega_1$ that does not belong to
$\partial\Omega_0$ consists of at most countable family of intervals of the
total length $\ell<\delta$. Our aim is to modify the function $\psi$ on these
intervals. Take a finite family of pairwise disjoint closed segments of the
total length greater than $0.9\,\ell$ belonging to the intervals of this
family. For every such segment $I$ let us proceed as follows.

Let $Q$ be a circular lune whose boundary consists of $I$ and the circular
arc $I'$ of some circle with center lying outside $\Omega_1$ and with the
angle measure less than $\delta^2$ (the arcs $I$ and $I'$ intersect only by
their end-points). Take a function $\chi$ that maps conformally the exterior
of $Q$ onto $\mathbb D$ with the normalization $\chi(\infty)=0$ and assume
that $\chi$ is already extended to the homeomorphism of the corresponding
closed domains. Define
$$
\psi_I(z)=\sum_{k=k_0(I)}^{\infty}\frac{(\chi(z))^{m_k}}{km_k},
$$
where $k_0(I)$ is sufficiently large. At the next step we remove from
$\Omega_1$ all domains $Q$ constructed above and all isosceles triangles
constructed on all arcs $I'$ using the ice-cream cone construction as it was
mentioned above. The resulting domain will be $\Omega_2$. For this domain we
repeat the same construction as before.

It can be readily verified that the result of Proposition~\ref{pro-s:emx-2}
will be preserved if replace the function $\psi$ with $\psi(z)-z$. Therefore
we can use the function $\psi_I$ constructed above for modification of the
initial function $\psi$. Indeed, we can define $\varphi=\psi+\sum_I\psi_I$,
where the sum is taken over all $\psi_I$ constructed in all steps. It is not
difficult to prove that $\varphi$ is univalent in the resulting domain
$\Omega$ which is the kernel of the sequence $\Omega_j$, $j\geqslant0$.

Finally, we need to check that the domain $\Omega$ is
$\overline\partial^2$-regular. In order to verify this property we will use
the following criterion of $\overline\partial^2$-regularity, which was proved
in \cite{Maz2009sbm}, Lemma~4.2, and which is obtained using the functional
analysis methods and the Rudin--Carleson theorem stating that any compact set
of zero length is an interpolation peak set for continuous holomorphic
functions, see \cite{Rud1956pams} and \cite{Car1957mz}.

\begin{proposition}\label{pro-s:emx-3}
The domain $G$ is $\overline\partial^2$-regular if and only if the following
property is satisfied. There exists an increasing sequence $(E_n)$,
$n=1,2,\ldots$, of closed subsets of $\partial G$ such that

\smallskip
1\textup)~the length of the set $\partial G\setminus\bigcup_nE_n$ is zero,
and

\smallskip
2\textup)~for every $f\in C(\partial G)$ with $\|f\|_{\partial G}\leqslant1$,
for every $E_n$, and for every $\varepsilon>0$ there exists $F\in
C(\overline{G})\cap\mathcal O(G,\overline\partial^2)$ such that
$|F(z)|\leqslant2$ for $z\in G$ and $|f(z)-F(z)|<\varepsilon$ for $z\in E_n$.
\end{proposition}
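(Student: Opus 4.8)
The plan is to recast the statement operator-theoretically and then prove the two implications separately. Set $X=C(\overline G)\cap\mathcal O(G,\overline\partial^2)$ with the sup-norm $\|\cdot\|_{\overline G}$; since a locally uniform limit of $\overline\partial^2$-analytic functions is again $\overline\partial^2$-analytic (by interior elliptic regularity, cf.\ Section~\ref{s:intro}), $X$ is closed in $C(\overline G)$, hence a Banach space, and the restriction map $R\colon X\to C(\partial G)$, $Rf=f|_{\partial G}$, is bounded with norm $\leqslant1$. By definition $G$ is $\overline\partial^2$-regular exactly when $R$ is onto. For the ``only if'' implication I would take $E_n=\partial G$ for every $n$, so that $\partial G\setminus\bigcup_nE_n=\emptyset$ has length zero; by the open mapping theorem there is a constant $C_G$ such that every $f\in C(\partial G)$ with $\|f\|_{\partial G}\leqslant1$ has an \emph{exact} extension $F\in X$ with $\|F\|_{\overline G}\leqslant C_G$, which gives property~2 (with $C_G$ in place of the number $2$; in fact the criterion does not depend on the value of this constant, the ``if'' part working for any fixed one).

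The ``if'' implication is the substantial one and, as announced, uses the Rudin--Carleson theorem. Assume the $(E_n)$ with properties 1)--2). First I would upgrade property~2 to bounded exact interpolation on each $E_n$: since $E_n$ is closed, Tietze's theorem gives $C(E_n)=C(\partial G)|_{E_n}$, so property~2 says that the closure of $R_n(B)$ contains the unit ball of $C(E_n)$, where $R_n\colon X\to C(E_n)$ is the restriction and $B=\{F\in X:\|F\|_{\overline G}\leqslant2\}$; the standard ``approximate open mapping'' lemma then produces a constant $K$, \emph{independent of $n$}, such that every $h\in C(E_n)$ equals $F|_{E_n}$ for some $F\in X$ with $\|F\|_{\overline G}\leqslant K\|h\|_{E_n}$. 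The second ingredient is that a closed subset of $\partial G$ of zero length is a peak interpolation set for the algebra $A(G)$ of functions holomorphic in $G$ and continuous on $\overline G$ (note $A(G)\subset X$, and likewise $\overline z\,A(G)\subset X$): this follows from the Rudin--Carleson theorem on $\mathbb T$ after transferring by a conformal map $\varphi\colon\mathbb D\to G$, using that for rectifiable $\partial G$ the F.\ and M.\ Riesz theorem makes zero-length subsets of $\partial G$ correspond to Lebesgue-null subsets of $\mathbb T$. Observe finally that $\bigcup_nE_n$, whose complement has zero length in a rectifiable curve, is both co-null and dense in $\partial G$.

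To finish I would combine these ingredients as follows. Given $\psi\in C(\partial G)$ with $\|\psi\|_{\partial G}\leqslant1$, pick $F_n\in X$ with $F_n|_{E_n}=\psi|_{E_n}$ and $\|F_n\|_{\overline G}\leqslant K$. The family $\{F_n\}$ being uniformly bounded, interior elliptic estimates and a diagonal argument yield a subsequence converging locally uniformly in $G$ to some $f\in\mathcal O(G,\overline\partial^2)$ with $|f|\leqslant K$ on $G$; the boundary values of $f$ should coincide with $\psi$ on the dense co-null set $\bigcup_nE_n$, and a Rudin--Carleson correction on the remaining zero-length bad set $\partial G\setminus\bigcup_nE_n$ then repairs $f$ into an element of $X$ whose restriction to $\partial G$ is exactly $\psi$, proving surjectivity of $R$ and hence $\overline\partial^2$-regularity of $G$. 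An equivalent route is via duality: show that $R$ has dense and closed range, equivalently that $R^{*}$ is bounded below on $M(\partial G)$, by decomposing an annihilating measure $\mu\perp X$ with the F.\ and M.\ Riesz theorem on its action against $A(G)$, exploiting that $X$ also contains $\overline z\,A(G)$, and killing the part of $\mu$ carried by the zero-length bad set through the peak-set property above.

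I expect the main obstacle to be exactly the passage to the boundary in the limiting argument. A uniform bound on a family of $\overline\partial^2$-analytic functions controls them only in the interior, and the holomorphic components $h_n$ in the decomposition $F_n=\overline z\,h_n+g_n$ need \emph{not} remain bounded up to $\partial G$ (see the remark following Proposition~\ref{pro-s:emx-2}), so one cannot naively take boundary limits. The purpose of having the \emph{uniform} bound $K$ from the first step, of the co-nullity in property~1), and of the peak-interpolation (Rudin--Carleson) correction is precisely to circumvent this; turning either the limiting/correction scheme or the closed-range/duality reformulation into a rigorous argument is the technical heart of the proof.
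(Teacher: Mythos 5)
Your proposal is a plan rather than a proof, and the decisive step is missing at exactly the point you yourself flag. The soft preliminaries are fine: $X=C(\overline G)\cap\mathcal O(G,\overline\partial^2)$ is a Banach space, regularity is surjectivity of the restriction $R$, the series argument upgrading property~2 to exact interpolation on each $E_n$ with a uniform constant $K$ is correct, and the transfer of zero-length subsets of the rectifiable curve $\partial G$ to Lebesgue-null subsets of $\mathbb T$ via the F.~and M.~Riesz theorem, so that Rudin--Carleson applies, is also correct --- these are precisely the ingredients (``functional analysis methods and the Rudin--Carleson theorem'') that the paper attributes to the proof in \cite{Maz2009sbm}, Lemma~4.2, which it cites instead of proving the proposition. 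But the core of the ``if'' direction is not carried out. A normal-families limit of the interpolants $F_n$ (uniformly bounded by $K$, equal to $\psi$ on $E_n$) controls nothing near $\partial G$: there is no maximum principle and no boundary equicontinuity for bianalytic functions (this failure is the whole theme of the paper, cf.\ Section~\ref{s:approx}), so the locally uniform limit $f$ need not extend continuously to $\overline G$, need not have boundary values at all, and there is no reason its boundary behaviour remembers $\psi$ on $\bigcup_nE_n$. The subsequent ``Rudin--Carleson correction'' is not an available operation: Rudin--Carleson produces functions of $A(G)$ interpolating continuous data on a null compact set, and cannot repair a function that is merely bounded in $G$ with unknown boundary behaviour. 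The alternative duality route ($R^*$ bounded below on $M(\partial G)$, splitting a measure into its part on the zero-length residual set, killed by peak functions, and its part on $\bigcup E_n$, handled by property~2) is much closer to a workable argument, but you only name it; the quantitative lower bound, including the interaction between the two parts of the measure, is the actual content of Mazalov's lemma and is absent here.

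There is also a gap in the ``only if'' half: with $E_n=\partial G$ and the open mapping theorem you obtain property~2 with a constant $C_G$, not with the constant $2$ demanded in the statement, and the remark that ``the criterion does not depend on the value of this constant'' concerns the robustness of the ``if'' direction, not the necessity with the stated bound. Proving necessity with the bound $2$ genuinely requires an argument (and a nontrivial choice of the sets $E_n$): one cannot, for instance, fix a large exact extension $F$ by multiplying it by $q\in A(G)$ with $|q-1|\leqslant\varepsilon$ on a boundary set of positive length and $|q|$ small on the compact subset of $G$ where $|F|$ is large, because by the two-constants theorem such a $q$ is close to $1$ on every compact subset of $G$ once $\varepsilon$ is small. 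So both halves of the equivalence, as stated, remain unproved in your write-up; what you have is the correct framework and the correct list of tools.
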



\section{Uniform approximation by $\mathcal L$-analytic\\polynomials and
$\mathcal L$-Dirichlet problem} \label{s:approx}

Denote by $\mathcal P$ the class of all polynomials in the complex variable
$z$. Let $\mathcal L\in\Ell$, and let $\lambda_1$ and $\lambda_2$ be the
characteristic roots of $\mathcal L$. Recall that by $\mathcal L$-analytic
polynomial we mean any complex-valued polynomial $P$ in two real variables
that satisfies the equation $\mathcal LP=0$. If $\lambda_1\neq\lambda_2$,
then any $\mathcal L$-analytic polynomial has the form \eqref{eq-solrep1},
where $f_1,f_2\in\mathcal P$. If $\lambda_1=\lambda_2$, then any $\mathcal
L$-analytic polynomial is the function of the form \eqref{eq-solrep2}, where
also $f_0,f_1\in\mathcal P$. Denote by $\mathcal P_{\mathcal L}$ the class of
all $\mathcal L$-analytic polynomials.

Note that if we reduce the operator $\mathcal L$ to the form \eqref{eq-op2se}
with $\tau\in[0,1)$ or \eqref{eq-op2nse} with $\tau\in(0,1)$, respectively,
then $\mathcal L$-analytic polynomials will take the form
\eqref{eq-soltau-se} or \eqref{eq-soltau-nse}, respectively, where
$g,h\in\mathcal P$. Moreover, any $\mathcal L_0$-analytic polynomials has the
form $\overline{z}P_1(z)+P_0(z)$ with $P_0,P_1\in\mathcal P$ (since $\mathcal
L_0=\overline\partial{}^2$).

For a given compact set $X\subset\mathbb C$ we denote by $P_{\mathcal L}(X)$
the space of all functions that can be approximated uniformly on $X$ by
$\mathcal L$-analytic polynomials. In other words, a function $f$ belongs to
$P_{\mathcal L}(X)$ if and only if for every $\varepsilon>0$ there exists
$P\in\mathcal P_{\mathcal L}$ such that $\|f-P\|_X<\varepsilon$. It is clear
that
$$
P_{\mathcal L}(X)\subset C_{\mathcal L}(X):=C(X)\cap\mathcal O(X^\circ,\mathcal L),
$$
where $X^\circ=\Int(X)$ is the interior of $X$. Let us consider the following
problem.
\begin{problem}\label{prob-approx}
To describe compact sets $X\subset\mathbb C$ for which $P_{\mathcal
L}(X)=C_{\mathcal L}(X)$.
\end{problem}
More precisely, it is demanded in Problem 2 to obtain necessary and
sufficient conditions on $X$ in order that the equality $P_{\mathcal
L}(X)=C_{\mathcal L}(X)$ is satisfied. This problem is the well-known
classical problem in complex analysis. Its statement is traced to the
classical problems on uniform approximation by harmonic polynomials and by
polynomials in the complex variables that were solved by Walsh and Mergelyan,
respectively. One ought to emphasize that Problem~\ref{prob-approx} is still
open in the general case. We refer the interested reader to
\cite{MazParFed2012rms}, where one can find a detailed survey concerning the
matter. Let us also note that Problem~\ref{prob-approx} is closely related
with the problem on approximation of functions $f\in C_{\mathcal L}(X)$ by
functions which are $\mathcal L$-analytic in neighborhoods of $X$. This is a
classical problem in the case of holomorphic and harmonic functions and its
consideration is out of the scope of this paper. The studies of this problem
in the context of approximation by solutions of general elliptic equations
was started in 1980s--1990s. A more or less exhaustive bibliography on this
subject may be found in \cite{MazParFed2012rms}, but let us mention here a
couple of important works \cite{ParVer1994msand}, \cite{Tar1987sbm},
\cite{Ver1987duke}, \cite{Ver1993pjm} and \cite{Ver1994nato}.

The only case where Problem~\ref{prob-approx} was solved completely is the
case $\mathcal L=\Delta$, and, as a clear consequence, a slightly more
general case when $\mathcal L$ has real coefficients (up to a common complex
multiplier). To state the corresponding result we need the concept of a
Carath\'eodory compact set.

We recall, that a compact set $X\subset\mathbb C$ is called a
\emph{Carath\'eodory compact set}, if $\partial X=\partial\widehat{X}$, where
$\widehat{X}$ denotes the union of $X$ and all bounded connected components
of $\mathbb C\setminus X$.

\begin{atheorem}\label{thm-wl}
Let $X$ be a compact set in $\mathbb C$. Then $P_\Delta(X)=C_\Delta(X)$ if
and only if $X$ is a Carath\'eodory compact set.
\end{atheorem}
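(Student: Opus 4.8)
This statement is the classical Walsh--Lebesgue theorem, and in a proof I would arrange for Lebesgue's Theorem~\ref{thm-lebesgue} to carry the decisive step; I would treat the (softer) necessity first and then sufficiency.

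\emph{Necessity.} The pivotal remark is that a uniform limit on $X$ of harmonic polynomials automatically extends to $\widehat X$. Indeed, for a complex-valued harmonic polynomial $h$ one has $|h|=\sup_{|\omega|=1}\Re(\overline\omega\,h)$, a supremum of harmonic functions, so $|h|$ is subharmonic; moreover every connected component $W$ of $\Int\widehat X$ satisfies $\partial W\subseteq\partial\widehat X=\partial\Omega\subseteq\partial X\subseteq X$, where $\Omega$ denotes the unbounded component of $\mathbb C\setminus X$. Hence, by the maximum principle, $\|P_n-P_m\|_{\widehat X}=\|P_n-P_m\|_{\partial\widehat X}\leqslant\|P_n-P_m\|_X$, so $P_n\to f$ uniformly on $X$ forces $(P_n)$ to converge uniformly on $\widehat X$ to some $\widehat f\in C(\widehat X)$ that is harmonic on $\Int\widehat X$ and restricts to $f$ on $X$. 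Now suppose $X$ is \emph{not} a Carath\'eodory compact set, and pick a bounded component $G_1$ of $\mathbb C\setminus X$ with a boundary point $\zeta_0\in\partial G_1\setminus\partial\Omega$; then a whole disk about $\zeta_0$, hence the component $W$ of $\Int\widehat X$ containing $\zeta_0$, as well as $G_1$ itself, lie in $\Int\widehat X$, with $\partial W\subseteq\partial\Omega$. Choosing $f\in C_\Delta(X)$ with $f\equiv 0$ on $\partial\Omega$ but $f\not\equiv 0$ on $\partial G_1$ (possible since $\partial G_1\setminus\partial\Omega\ne\varnothing$ and the Dirichlet problem on the components of $X^\circ$ is solvable by Theorem~\ref{thm-lebesgue}), the would-be extension $\widehat f$ would be harmonic on $W$, continuous on $\overline W$, and $\equiv 0$ on $\partial W$, hence $\equiv 0$ on $W\supseteq G_1$ and therefore on $\partial G_1$, contradicting $\widehat f|_X=f$. (A variant uses a Cauchy kernel $1/(z-a)$ with $a\in G_1$ in place of this $f$.)

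\emph{Sufficiency.} Let $X$ be Carath\'eodory and $f\in C_\Delta(X)$; write $\Omega=\mathbb C\setminus\widehat X$, so $\partial X=\partial\Omega$. First one observes that every bounded component $G_j$ of $\mathbb C\setminus X$ is simply connected --- otherwise $\widehat{\mathbb C}\setminus G_j$ would possess a bounded component whose boundary lies in $\partial\Omega\subseteq\overline\Omega$, which is impossible since $\overline\Omega$ lies entirely in the unbounded component of $\widehat{\mathbb C}\setminus G_j$. Then, by Lebesgue's Theorem~\ref{thm-lebesgue}, the Dirichlet problem in each $G_j$ with boundary data $f|_{\partial G_j}$ is solvable; gluing these solutions to $f$ produces $F\in C(\widehat X)$ that is harmonic on $\Int\widehat X=X^\circ\cup\bigcup_j G_j$, restricts to $f$ on $X$, and satisfies $\|F\|_{\widehat X}=\|f\|_X$. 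Since approximating $F$ on $\widehat X$ approximates $f$ on $X$, we may henceforth assume $\mathbb C\setminus X$ connected, and, splitting into real and imaginary parts, we may assume $f$ real-valued.

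For such $f$ I would run Walsh's classical argument. Using that $\mathbb C\setminus X$ is connected, exhaust $X$ from outside by a decreasing sequence of compact sets $X_k$ with $X\subseteq\Int X_k$ and $X_k\downarrow X$, each $X_k$ a finite union of pairwise disjoint closed Jordan domains with analytic boundary (so $\mathbb C\setminus X_k$ is again connected); extending $f$ to a function continuous on $\mathbb C$ and solving the Dirichlet problem on each component of $\Int X_k$ with that data --- once more via Theorem~\ref{thm-lebesgue} --- yields functions $u_k$ harmonic on a neighborhood of $X$; and finally each $u_k$, being harmonic near $X$, is uniformly approximable on $X$ by harmonic polynomials via the harmonic Runge theorem (write $u_k=\Re g$ locally with $g$ holomorphic --- legitimate on the simply connected components of $\Int X_k$ --- and approximate $g$ by polynomials, which is possible because $\mathbb C\setminus X$ is connected). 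The main obstacle is to prove that $u_k\to f$ uniformly on $X$: one must control the boundary behaviour of the Dirichlet solutions $u_k$ as the regions $X_k$ collapse onto $X$, i.e.\ establish stability of the Dirichlet problem under this outer exhaustion. This is exactly where the Carath\'eodory hypothesis is indispensable --- after the reduction it is encoded in the connectivity of $\mathbb C\setminus X$, which both permits the exhaustion by sets $X_k$ with connected complement and rules out the ``buried'' boundary points that would obstruct the uniform passage to the limit.
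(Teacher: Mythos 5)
The paper does not actually prove Theorem~\ref{thm-wl}: it is quoted as the classical Walsh--Lebesgue theorem with a reference to \cite{Wal1929bams}, so your attempt can only be judged on its own merits. As it stands it is an outline with a genuine gap at the decisive point. In the sufficiency direction you reduce to a real-valued $f$ and connected complement, build the outer exhaustion $X_k$, solve the Dirichlet problem on $\Int X_k$ and invoke harmonic Runge --- all fine --- but then you explicitly leave unproved the statement that $u_k\to f$ uniformly on $X$, remarking only that ``this is exactly where the Carath\'eodory hypothesis is indispensable.'' That convergence (stability of the Dirichlet problem under the outer exhaustion) \emph{is} the Walsh--Lebesgue theorem; everything before it is routine. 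It requires an actual argument --- e.g.\ Walsh's original estimates with harmonic measure of the boundary pieces of $\Int X_k$ near $\partial X$, or the dual route via measures orthogonal to harmonic polynomials and the logarithmic potential (as in Gamelin's treatment), or a Keldysh-type stability argument --- and none is supplied. A secondary unaddressed point in the same direction: when you glue the Dirichlet solutions on the (possibly infinitely many) bounded components $G_j$ to $f$, continuity of the glued $F$ at points of $X$ that are accumulation points of infinitely many $G_j$ is not automatic and needs a harmonic-measure (or modulus-of-continuity) estimate.

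The necessity direction is essentially right in structure (the maximum-principle extension of the approximating polynomials to $\widehat X$, and the component $W$ of $\Int\widehat X$ with $\partial W\subseteq\partial\Omega$), but the test function is not legitimately produced: you justify its existence by solving the Dirichlet problem ``on the components of $X^\circ$'' via Theorem~\ref{thm-lebesgue}, yet those components need not be simply connected (nor regular), and again the gluing/continuity on all of $X$ is not addressed; also your parenthetical variant with $1/(z-a)$ does not give a contradiction as stated, since a harmonic function on $W$ can perfectly well agree with $1/(z-a)$ on $\partial W$ (on the disk, $1/z$ restricted to the circle is $\overline z$). A clean repair: take $\zeta_0\in\partial X\setminus\partial\Omega$, note $\zeta_0\in\Int\widehat X$, pick $a\notin X$ near $\zeta_0$ in the same component $W$, and use $f(z)=\log|z-a|\in C_\Delta(X)$; then $w=\widehat f-\log|z-a|$ is superharmonic on $W$, nonnegative by the minimum principle (its boundary values on $\partial W\subseteq X$ vanish and $w=+\infty$ at $a$), vanishes at the interior point $\zeta_0\in X\cap W$, hence $w\equiv0$, contradicting the singularity at $a$. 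With that fix the necessity half is complete; the sufficiency half still needs its missing core argument.
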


This remarkable result was proved by Walsh at the end of 1920s, see
\cite{Wal1929bams}, and nowadays it is called the Walsh--Lebesgue theorem in
view of the crucial role the Lebesgue theorem (Theorem~\ref{thm-lebesgue})
plays in the proof. Note that in \cite{Wal1929bams} only the case of nowhere
dense compact sets was considered explicitly, but the general case may be
obtained as a consequence of this partial result. Let us also observe that
the first, to the best of our knowledge, formulation of the Walsh--Lebesgue
theorem in the above form was presented in \cite{Par1994sbm}, Section~1. The
deep enough exposition of the Walsh--Lebesgue theorem and certain related
topics may be found in Chapter~2 of the book \cite{Gam1984book}.

The second case, when the substantial progress was achieved in studies of
Problem~\ref{prob-approx}, is the case where $\mathcal
L=\overline\partial{}^2$. In this case Problem~\ref{prob-approx} was solved
completely for Carath\'eodory compact sets. The following results was
obtained in \cite{CarParFed2002sbm}, Theorem~2.2.

\begin{atheorem}\label{thm-cfp}
Let $X$ be a Carath\'eodory compact set in $\mathbb C$. Then
$P_{\overline\partial^2}(X)=C_{\overline\partial^2}(X)$ if and only if any
bounded connected component of the set $\mathbb C\setminus X$ is not a
Nevanlinna domain.
\end{atheorem}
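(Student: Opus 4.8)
The plan is to prove Theorem~\ref{thm-cfp} by duality, converting the equality $P_{\overline\partial^2}(X)=C_{\overline\partial^2}(X)$ into a statement about annihilating measures and then reading the condition on the bounded complementary components off the behaviour of a potential of such measures. Since $\overline\partial^2$ has fundamental solution $E(z)=\overline z/(\pi z)$, I would attach to each finite complex measure $\mu$ on $X$ the bianalytic Cauchy transform $\Phi_\mu(\zeta)=\pi^{-1}\int(\overline z-\overline\zeta)(z-\zeta)^{-1}\,d\mu(z)$, which is bounded (its kernel has modulus $1$) and satisfies $\overline\partial^2\Phi_\mu=\mu$ and $\overline\partial\Phi_\mu=-\pi^{-1}\widehat\mu$, with $\widehat\mu$ the ordinary Cauchy transform. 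Expanding the kernel at infinity shows that $\mu$ annihilates the bianalytic polynomials --- i.e.\ $\int z^k\,d\mu=\int\overline z\,z^k\,d\mu=0$ for all $k\geqslant0$ --- exactly when $\Phi_\mu\equiv0$ on the unbounded component $\mathbb C\setminus\widehat X$; for such $\mu$ the function $\Phi_\mu$ is compactly supported in $\widehat X$. By Hahn--Banach, $P_{\overline\partial^2}(X)=C_{\overline\partial^2}(X)$ becomes the assertion that every $\mu$ with $\Phi_\mu\equiv0$ off $\widehat X$ already annihilates $C_{\overline\partial^2}(X)$.

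For sufficiency, assume no bounded component of $\mathbb C\setminus X$ is a Nevanlinna domain and take $\mu$ with $\Phi_\mu\equiv0$ off $\widehat X$. If $\Omega$ is such a component (a simply connected domain, as $X$ is Carath\'eodory), then $\Phi_\mu|_\Omega$ is $\overline\partial^2$-analytic, hence $\Phi_\mu=\overline z\,a(z)+b(z)$ on $\Omega$ with $a=-\pi^{-1}\widehat\mu|_\Omega$ and $b$ holomorphic in $\Omega$, and estimates for Cauchy-type potentials place $a$ and $b$ in the bounded-type class of $\Omega$. Now the Carath\'eodory hypothesis enters: $\partial\Omega\subseteq\partial\widehat X$, so each boundary point of $\Omega$ is approached from $\mathbb C\setminus\widehat X$, where $\Phi_\mu\equiv0$, which together with the boundedness of $\Phi_\mu$ and $\widehat\mu$ forces $\overline z\,a+b=0$ a.e.\ on $\partial\Omega$. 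Were $a\not\equiv0$, then $\overline z=-b/a$ would be the nontangential boundary value of a meromorphic function of bounded type in $\Omega$, making $\Omega$ a Nevanlinna domain, contrary to assumption; so $a\equiv0$, then $b\equiv0$ by the Luzin--Privalov uniqueness theorem, i.e.\ $\Phi_\mu\equiv0$ on $\Omega$. Over all bounded components this shows $\Phi_\mu$ is supported in $X$, and then --- after reducing, by a Tietze-extension-and-mollification argument, to an $f$ that is $\overline\partial^2$-analytic in a neighbourhood of $X$ (such $f$ are dense in $C_{\overline\partial^2}(X)$ when $X$ is Carath\'eodory) --- one gets $\int f\,d\mu=\langle\overline\partial^2\Phi_\mu,f\rangle=\int(\overline\partial^2 f)\,\Phi_\mu\,dm_2(z)=0$, because $\Phi_\mu$ is compactly supported in $X$ while $\overline\partial^2 f=0$ there. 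Hence $\mu\perp C_{\overline\partial^2}(X)$.

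For necessity, suppose a bounded component $\Omega$ of $\mathbb C\setminus X$ is a Nevanlinna domain, say $\overline z=u/v$ a.e.\ on $\partial\Omega$ with $u,v$ of bounded type and $v\not\equiv0$. Using $\overline z\,v=u$ on $\partial\Omega$ one produces a nonzero complex measure $\nu$ carried by $\partial\Omega\subseteq\partial X$ --- when $\partial\Omega$ is rectifiable simply $\nu=v(z)\,dz$, and in general a suitably weighted conformal image of a density on $\mathbb T$ --- for which Cauchy's theorem gives $\int z^k\,d\nu=\int\overline z\,z^k\,d\nu=0$ for all $k\geqslant0$; thus $\nu$ annihilates the bianalytic polynomials, while $\nu\neq0$ because a nonzero function of bounded type cannot vanish on a boundary set of positive measure. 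Since $\partial\Omega$ lies on the outer boundary $\partial\widehat X$ it is ``free'', and from this one checks that $\nu$ does \emph{not} annihilate $C_{\overline\partial^2}(X)$: there is $f\in C_{\overline\partial^2}(X)$ whose restriction to $\partial\Omega$ is not a bounded-type function divided by $v$. By Hahn--Banach, $P_{\overline\partial^2}(X)\subsetneq C_{\overline\partial^2}(X)$.

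The skeleton above is short, and the hard part will be the two analytic ingredients it leans on. The first is the Mergelyan-type density of neighbourhood-solutions of $\overline\partial^2 f=0$ in $C_{\overline\partial^2}(X)$ for Carath\'eodory $X$, without which the integration by parts only disposes of polynomials or of functions analytic near $X$. The second --- and I expect this to be the genuine crux --- is the boundary analysis along the a priori very irregular curve $\partial\Omega$: one must show that $L^p_{\mathrm{loc}}$-regularity of the Cauchy transform ($1<p<2$) really drives the holomorphic parts $a,b$ into the bounded-type class of $\Omega$, that the vanishing of $\Phi_\mu$ outside $\widehat X$ transfers to a.e.\ vanishing of its boundary values on $\partial\Omega$, and --- on the necessity side --- that the measure $\nu$ can be constructed (handling non-rectifiable $\partial\Omega$ and the usual Smirnov-class subtleties) and genuinely fails to annihilate $C_{\overline\partial^2}(X)$. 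The remaining steps (Hahn--Banach, the expansion at infinity, Luzin--Privalov) are routine.
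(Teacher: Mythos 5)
You should first note that the paper does not prove Theorem~\ref{thm-cfp} at all: it is quoted as background from \cite{CarParFed2002sbm}. The closest argument actually carried out in the paper is the combination of Theorem~\ref{thm-bgp+z} with Theorem~\ref{thm-tz}, and the proof of the latter runs through an entirely different engine than yours: a measure orthogonal to $R(\overline G)$ on a Carath\'eodory domain is written as $(b\circ\psi)\,\omega$ with $b\in H^1(\mathbb D)$ and $\omega$ the conformal image of arc length, one passes to primitives and to boundary functions of class $\AC$, and the Nevanlinna/``special domain'' condition is read off from the equality of two such boundary functions. That route never has to control boundary values of Cauchy-type potentials inside the (possibly highly irregular) holes. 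Your skeleton --- Hahn--Banach, the transform $\Phi_\mu$ with kernel $(\overline z-\overline\zeta)/(z-\zeta)$, and the equivalence ``$\mu$ annihilates bianalytic polynomials iff $\Phi_\mu\equiv0$ off $\widehat X$'' --- is the standard opening and is correct; the transfer of the vanishing of $\Phi_\mu$ to a.e.\ boundary vanishing on $\partial\Omega$ is also fixable (by dominated convergence $\Phi_\mu$ is continuous at every non-atom of $\mu$, and harmonic measure charges no points), even though your justification of it is vague.

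The genuine gaps are exactly the three steps you defer, and at least one of them cannot be closed by the means you invoke. First, the claim that ``estimates for Cauchy-type potentials place $a$ and $b$ in the bounded-type class of $\Omega$'' is unsupported: boundedness of $\Phi_\mu=\overline z\,a+b$ does not force $a,b$ to be of bounded type even in $\mathbb D$ (take a Hadamard-lacunary $a$ with $a(0)=0$ and non-square-summable coefficients and $b=-a/z$; then $\overline z\,a+b=a(z)(|z|^2-1)/z$ is bounded while $a\notin N(\mathbb D)$), and the $L^p_{\mathrm{loc}}$, $p<2$, regularity of $\widehat\mu$ gives nothing like Nevanlinna-class membership after composing with a conformal map of an arbitrary simply connected hole; note also that your ``$a\equiv0$, hence $b\equiv0$ by Luzin--Privalov'' branch needs $b$ to have nontangential limits, i.e.\ the same unproved membership. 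This is precisely the point where the known proof (and the paper's own proof of Theorem~\ref{thm-tz}) switches to the $H^1$/harmonic-measure structure theorem instead. Second, the density in $C_{\overline\partial^2}(X)$ of functions bianalytic in neighbourhoods of $X$ is a genuine theorem (Verdera \cite{Ver1993pjm}), not a Tietze-extension-and-mollification exercise: mollifying a continuous extension destroys the equation near $\partial(\Int X)$. Third, in the necessity direction, producing a nonzero $\nu$ supported on $\partial\Omega$ and orthogonal to all bianalytic polynomials is only half the work; you must exhibit $f\in C_{\overline\partial^2}(X)$ --- continuous on $X$ \emph{and} bianalytic on $\Int X$, not merely continuous on $\partial\Omega$ --- with $\int f\,d\nu\neq0$, and your ``one checks'' criterion neither constructs such an $f$ nor handles the non-rectifiable boundaries and Smirnov-class subtleties in the construction of $\nu$ itself (in the literature this is where the reduction of Theorem~\ref{thm-bgp+z} and the conformal-pullback description of annihilating measures do the work). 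So the architecture is reasonable, but the deferred steps are the actual content of the theorem, and step one as formulated would not go through.
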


The concept of a Nevanlinna domain is the special analytic characteristic of
bounded simply connected domains in $\mathbb C$. It's formal definition is
given in \cite{Fed1996mn}, Definition~1, in the case of Jordan domains with
rectifiable boundaries, and in \cite{CarParFed2002sbm}, Definition~2.1, in
the general case. We are not going to define this concept explicitly here,
but we ought to note that the property of a given domain $G$ in $\mathbb C$
to be a Nevanlinna domain consists in the possibility of representing the
function $\overline{z}$ almost everywhere on $\partial G$ in the sense of
conformal mappings as a ratio of two bounded holomorphic functions in $G$.
The properties of Nevanlinna domains has been studied in detail during the
two last decades (see, for instance, \cite{Fed2006psim, BarFed2011sbm,
Maz2016spmj, BarFed2017jam, Maz2018spmj, BelFed2018rms, BelBorFed2019jfa}. It
was shown that the class of Nevanlinna domains is rather big in spite of the
fact that the definition of a Nevanlinna domain imposes quite rigid condition
to the boundary of the domain under consideration. For instance, there exists
such Nevanlinna domains $G$ that the Hausdorff dimension of $\partial G$
could take any value in $[1,2]$ (see \cite{BelBorFed2019jfa}, Theorem~3).

For compact sets $X$ which are not Carath\'eodory compact sets the question
whether the equality $P_{\overline\partial^2}(X)=C_{\overline\partial^2}(X)$
holds or not, is solved only for some particular cases. In the general case
the answer to this question is know only in the form of a certain
approximability condition of a reductive nature. For certain
non-Carath\'eodory compact sets of a special form the sufficient
approximability conditions were obtained in \cite{BoiGauPar2004izv},
\cite{CarParFed2002sbm} and \cite{CarFed2005otaa}. One interesting and
helpful tool using in these works is the concept of an analytic balayage of
measures which was introduced by D.~Khavinson \cite{DKha1988ca}, which was
rediscovered in \cite{CarParFed2002sbm} in a slightly different terms, and
which was studied by several authors both as an object of an independent
interest and in connection with properties of badly approximable functions in
$L^p(\mathbb T)$ (see \cite{AbaFed2018cras} and bibliography therein). It
seems to us interesting and appropriate to note this point.

In order to proceed further with our discussions of Problem~\ref{prob-approx}
and its relations with $\mathcal L$-Dirichlet problem let us introduce one
more space of functions. For a pair of compact sets $X$ and $Y$ in $\mathbb
C$ with $X\subseteq Y$, let $A_{\mathcal L}(X,Y)$ be the closure in $C(X)$ of
the space $\{f|_X\colon f\in\mathcal O(U_f(Y),\mathcal L)\}$, where $U_f(X)$
is some, depending on $f$, neighborhood of $Y$. The typical case when we are
needed this space, is the case when $X=\partial G$ and $Y=\overline{G}$ for
some bounded simply connected domain $G$ in $\mathbb C$. It is clear, that
$$
P_{\mathcal L}(X)\subset A_{\mathcal L}(X,Y)\subset A_{\mathcal L}(X,X)
\subset C_{\mathcal L}(X).
$$
Let us also note that if $X$ has a connected complement, then $A_{\mathcal
L}(X,Y)=P_{\mathcal L}(X)$ for every $Y$ with $X\subseteq Y$. This is a
direct consequence of the standard Runge's pole shifting method which remains
valid for $\mathcal L$-analytic functions, see \cite{Nar1973book},
Section~3.10.

Although a complete solution to Problem~\ref{prob-approx} has not yet been
obtained (for any operator $\mathcal L$ under consideration, except for the
Laplace operator $\Delta$ and for operators that can be reduced to it by a
not degenerate real-linear transformation of the plane), the following
approximability criterion of a reductive nature was established in connection
with this problem in the early 2000s.

\begin{atheorem}\label{thm-bgp+z}
Let $X$ be a compact set in $\mathbb C$ having disconnected complement, and
let $\mathcal L$ be an arbitrary operator of the form \eqref{eq-ellop}. The
equality $P_{\mathcal L}(X)=C_{\mathcal L}(X)$ takes place if and only if for
every connected component $G$ of the set $\Int(\widehat{X})$ such that
$G\cap(\mathbb C\setminus X)\neq\emptyset$ the equality is satisfied
$$
A_{\mathcal L}(\overline{G}\cap X,\overline{G})=C_{\mathcal L}(\overline{G}\cap X).
$$
\end{atheorem}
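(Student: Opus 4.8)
The plan is to establish Theorem~\ref{thm-bgp+z} by a fairly standard localization argument, the sort that underlies analogous ``reduction to components'' statements in the theory of uniform approximation by solutions of elliptic equations. The necessity direction is essentially trivial: if $P_{\mathcal L}(X)=C_{\mathcal L}(X)$ and $G$ is a connected component of $\Int(\widehat X)$ meeting $\mathbb C\setminus X$, then given $f\in C_{\mathcal L}(\overline G\cap X)$ one extends $f$ to a function in $C_{\mathcal L}(X)$ (this extension step itself needs a small argument: since $\overline G\cap X$ is a ``face'' of $X$ that one can separate from the rest of $X$, a Tietze-type extension followed by the observation that the equation $\mathcal Lf=0$ is vacuous off the interior of $X$ gives an element of $C_{\mathcal L}(X)$), approximates it by $\mathcal L$-analytic polynomials on all of $X$, hence in particular on $\overline G\cap X$; since polynomials are $\mathcal L$-analytic in a neighborhood of $\overline G$, this places $f$ in $A_{\mathcal L}(\overline G\cap X,\overline G)$, so the equality there holds.

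For the sufficiency direction I would first reduce, via $\mathcal L$-Runge (the pole-shifting method cited from \cite{Nar1973book}), to showing that every $f\in C_{\mathcal L}(X)$ can be approximated on $X$ by functions that are $\mathcal L$-analytic in a neighborhood of $\widehat X$, equivalently (since $\widehat X$ has connected complement) on $\widehat X$; and for the latter one works component by component on $\Int(\widehat X)$. The skeleton of the argument is a Vitushkin-type scheme adapted to the operator $\mathcal L$: using the fundamental solution of $\mathcal L$ one has a Cauchy-type integral representation and a localization operator $f\mapsto V_\omega f$ that multiplies by a smooth bump $\omega$ and corrects by a solid-integral term, preserving the class $C_{\mathcal L}$ and having small support. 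One decomposes $f$ into a locally finite sum of such localized pieces, approximates each piece separately, and sums. A piece supported near a point of $X\setminus\partial\widehat X$ (an interior point of $\widehat X$) is handled by the polynomial-approximation data \emph{within} the relevant component $G$: there $f$ restricted to $\overline G\cap X$ lies in $C_{\mathcal L}(\overline G\cap X)$, the hypothesis gives approximation by functions $\mathcal L$-analytic near $\overline G$, and one patches these local pieces across the components. A piece supported near $\partial\widehat X=\partial X$ is handled by the trivial unbounded-component estimate together with a standard smoothing, since there the complement of $X$ locally coincides with the complement of $\widehat X$ and Runge applies directly.

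The components $G$ of $\Int(\widehat X)$ with $G\subset X$ (i.e.\ $G\cap(\mathbb C\setminus X)=\emptyset$) require no hypothesis: there $f$ is already $\mathcal L$-analytic on all of $G$, so the localized piece is $\mathcal L$-analytic in a full neighborhood of its support and needs no approximation beyond Runge. The book-keeping is to show that finitely many ``large'' components carry all but an $\varepsilon$-fraction of the mass (so only finitely many invocations of the hypothesis are needed, with a uniform error bound), while the remaining infinitely many small components contribute a controllably small total error via the $C^1$-norm control of the localization operator $V_\omega$ — this is exactly the kind of estimate that makes Vitushkin-type schemes converge. I expect the main obstacle to be precisely this gluing/summation step: one must patch the finitely many component-wise approximants together into a \emph{single} function that is $\mathcal L$-analytic in a neighborhood of $X$ (not merely near each $\overline G$ separately) while keeping the total error under control, and this requires the localization operator to genuinely commute (up to small error in $C(X)$) with passage between $C_{\mathcal L}(X)$ and $A_{\mathcal L}(\overline G\cap X,\overline G)$. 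The technical heart is verifying that the localization $V_\omega$, when $\omega$ is supported in a small disk meeting $\partial G$, maps the global datum $f|_X$ to something whose restriction to $\overline G\cap X$ can be read off from $f|_{\overline G\cap X}$ alone, so that the component-wise hypothesis is actually applicable; this is where one uses that $\mathbb C\setminus\widehat X$ is connected so that the solid-integral correction term is itself approximable by $\mathcal L$-analytic functions near $\widehat X$.
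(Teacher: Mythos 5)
First, a point of comparison: the paper does not prove Theorem~\ref{thm-bgp+z} at all --- it is quoted from \cite{BoiGauPar2004izv} (bianalytic case) and \cite{Zai2004izv} (general $\mathcal L$), and those papers do indeed proceed by the Vitushkin-type localization scheme you outline. So your overall strategy matches the actual one in outline; the trouble is that your text is a programme rather than a proof, and one of the two directions contains a step that is wrong as stated.

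The concrete gap is in the necessity direction. You extend $f\in C_{\mathcal L}(\overline G\cap X)$ to an element of $C_{\mathcal L}(X)$ by a ``Tietze-type extension'', claiming that $\overline G\cap X$ can be separated from the rest of $X$ and that the equation is vacuous off the interior of $X$. Neither claim holds in general: another component $G'$ of $\Int(\widehat X)$ may satisfy $G'\subset X$ (so $G'\subset X^\circ$ and the equation is not vacuous there) and may have $\overline{G'}\cap\overline{G}\neq\emptyset$, so $\overline G\cap X$ is not relatively clopen in $X$, and any continuous extension is forced to be $\mathcal L$-analytic on $G'$, continuous on $\overline{G'}$, and to take the prescribed values of $f$ on $\partial G'\cap\partial G$. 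That is a (partial) $\mathcal L$-Dirichlet problem, and its solvability is exactly what can fail for $\mathcal L\in\NSE$ --- the theme of this very paper; so the extension step requires a genuine argument (in the cited sources the necessity is also handled through the localization machinery, not by a bare continuous extension). The sufficiency direction, which is the real content, you only sketch: the construction of the localization operator $V_\omega$ from a fundamental solution of $\mathcal L$ with its support and norm estimates, the verification that a localized piece whose support meets $\partial G$ can be approximated using the hypothesis for that single component alone, and the summation of the (possibly infinitely many) corrected pieces into one function $\mathcal L$-analytic near the relevant set with controlled total error are precisely the points you yourself flag as ``the main obstacle'' and ``the technical heart'' without carrying them out. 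A smaller slip: $\partial\widehat X$ and $\partial X$ do not coincide in general ($\partial X$ also contains the boundaries of the holes); what you actually need is only the dichotomy that every point of $X$ lies either in $\Int(\widehat X)$ or on $\partial\widehat X$.
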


This theorem was firstly proved in \cite{BoiGauPar2004izv} for bianalytic
functions, and almost immediately after that the proof was modified for
general $\mathcal L$ in \cite{Zai2004izv}. Theorem~\ref{thm-bgp+z} allows us
to reduce the problem of $\mathcal L$-analytic polynomial approximation on a
given compact set $X$ to compact subsets of $X$ having more simple
topological structure.

In the particular case, when $X$ is a Carath\'eodory compact set,
Theorem~\ref{thm-bgp+z} gives that the equality $P_{\mathcal
L}(X)=C_{\mathcal L}(X)$ takes place if and only if for any bounded connected
component $G$ of the set $\mathbb C\setminus X$ one has $A_{\mathcal
L}(\partial G,\overline{G})=C(\partial G)$. In this case the domain $G$ under
consideration is such that $\partial G=\partial G_\infty$, where $G_\infty$
is the unbounded connected component of the set $\mathbb
C\setminus\overline{G}$. Such domain $G$ is called a \emph{Carath\'eodory
domain}. It is clear that every Carath\'eoodry domain is simply connected and
possesses the property $G=\Int(\overline{G})$.

Given a bounded simply connected domain $\Omega$ let us denote by
$\partial_a\Omega$ the accessible part of $\partial\Omega$, namely the set of
all points $\zeta\in\partial\Omega$ which are accessible from $G$ by some
Jordan curve lying in $G\cup\{\zeta\}$ and ending at $\zeta$.

We are going to present one criterion in order that the equality $A_{\mathcal
L}(\partial G,\overline{G})=C(\partial G)$ holds for a given Carath\'eodory
domain $G$ (see Theorem~\ref{thm-tz} below). This result was firstly obtained
in \cite{Zai2002mn}, but the proof given therein is not enough complete in a
certain place. To state Theorem~\ref{thm-tz} we need to introduce yet another
space of functions and give one more definition. Let $G$ be a Carath\'eodory
domain in $\mathbb C$, and let $\varphi$ be some conformal mapping from
$\mathbb D$ onto $G$. One says that a holomorphic function $f$ in $G$ belongs
to the space $\AC(G)$, if the function $f\circ\varphi$ is extendable to a
function which is continuous on $\overline{\mathbb D}$ and absolutely
continuous on $\mathbb T$. It follows from the standard facts about conformal
mappings, that for every function $f\in\AC(G)$, for every point
$\zeta\in\partial_aG$, and for every path $\varUpsilon$ lying in
$G\cup\{\zeta\}$ and ending at $\zeta$, the limit of $f$ along $\varUpsilon$
exists and is equal to the same value $f(\zeta)$, which is called a boundary
value of $f$ at $\zeta$.

\begin{definition}\label{dfn-lspec}
Let $\mathcal L$ be an operator of the form \eqref{eq-ellop} with
characteristic roots $\lambda_1\neq\lambda_2$. A Carath\'eodory domain $G$ is
called an $\mathcal L$-special domain, if there exist two functions
$F_1\in\AC(T_{(1)}G)$ and $F_2\in\AC(T_{(2)}G)$ such that for every
$\zeta\in\partial_aG$ one has $F_1(T_{(1)}\zeta)=F_2(T_{(2)}\zeta)$.
\end{definition}

The real linear transformations $T_{(1)}$ and $T_{(2)}$ of the plane are
defined just after the formula \eqref{eq-solrep1} in Section~\ref{s:aux}
above.

\begin{theorem}\label{thm-tz}
Let $G$ be a Carath\'eodory domain in $\mathbb C$ and let $\mathcal L$ be an
operator of the form \eqref{eq-ellop} with characteristic roots
$\lambda_1\neq\lambda_2$.

\smallskip
1\textup) The equality $A_{\mathcal L}(\partial G,\overline{G})=C(\partial
G)$ takes place if and only if the domain $G$ is not an $\mathcal L$-special
domain.

\smallskip
2\textup) If $\mathcal L\in\SE$ then every Carath\'eodory domain in $\mathbb
C$ is not $\mathcal L$-special.
\end{theorem}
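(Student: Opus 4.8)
The plan is to reduce the question to a problem about boundary correspondence of two conformal mappings, exactly in the spirit of Corollary~\ref{cor-anarc} but without assuming the boundary contains an analytic arc. First I would set up the functional-analytic framework: by duality, the equality $A_{\mathcal L}(\partial G,\overline G)=C(\partial G)$ fails if and only if there is a nonzero annihilating measure $\mu$ on $\partial G$ that kills every function of the form \eqref{eq-solrep1} with $f_1,f_2$ holomorphic in neighborhoods of $\overline G$ — equivalently, using the pole-shifting remark recorded after Theorem~\ref{thm-bgp+z}, $\mu$ annihilates $\mathcal L$-analytic polynomials, hence annihilates both $p(T_{(1)}z)$ and $q(T_{(2)}z)$ for all polynomials $p,q$. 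Writing $\mu_{(j)}$ for the pushforward of $\mu$ under $z\mapsto T_{(j)}z$, this says the Cauchy transforms (moment sequences) of $\mu_{(1)}$ and $\mu_{(2)}$ vanish outside the respective images, so each $\widehat{\mu_{(j)}}$ is supported in (the closure of) $T_{(j)}G$. The existence of such a $\mu$ is then what has to be matched against the $\mathcal L$-special condition.

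The second step is the explicit description of annihilating measures via conformal mappings. Let $\varphi$ map $\mathbb D$ onto $G$; since $G$ is a Carath\'eodory domain, $\varphi$ has nontangential boundary values a.e. on $\mathbb T$ and carries a.e.\ point of $\mathbb T$ to $\partial_a G$. For $j=1,2$ the map $\varphi_j:=T_{(j)}\circ\varphi$ (real-linear composed with conformal) is a homeomorphism of $\mathbb D$ onto $T_{(j)}G$; note $T_{(j)}$ is real-linear and invertible (because $\lambda_1\ne\lambda_2$ are non-real and distinct), so $\varphi_j$ is a quasiconformal-type homeomorphism but, more to the point, its boundary behaviour is governed by that of $\varphi$. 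A measure on $\mathbb T$ of the form $\omega(\zeta)\,|d\zeta|$ pushed to $\partial G$ annihilates all $p(T_{(1)}z)$ precisely when the corresponding Cauchy-type integral against $\varphi_1$ extends holomorphically off $T_{(1)}G$; and the standard Cauchy-kernel computation shows this happens exactly when $\omega$ can be written in terms of an $H^1$-datum on the $\varphi_1$ side — concretely, when there is $G_1\in H^1$ (or an $\AC$ function, after the usual normalisation dividing by $\varphi_j'$) whose boundary values encode $\omega$. Carrying this out for both $j=1$ and $j=2$ simultaneously, the compatibility that a single $\mu$ on $\partial G$ serves both roles forces the two associated functions $F_1$ on $T_{(1)}G$ and $F_2$ on $T_{(2)}G$ to have equal boundary values along $\partial_a G$: that is exactly Definition~\ref{dfn-lspec}. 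This equivalence, in both directions, is part~1.

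For part~2, suppose $\mathcal L\in\SE$ and that $G$ were $\mathcal L$-special, with $F_1\in\AC(T_{(1)}G)$, $F_2\in\AC(T_{(2)}G)$ agreeing on $\partial_a G$. The point is that when $\mathcal L$ is strongly elliptic, $\lambda_1$ and $\lambda_2$ lie in opposite half-planes, which translates into the two real-linear maps $T_{(1)},T_{(2)}$ having \emph{opposite} orientation: one of the compositions $\varphi\mapsto T_{(1)}\circ\varphi$ is sense-preserving while the other is sense-reversing. Consequently, pulling back by $\varphi$, the function $F_1\circ\varphi_1$ is (the boundary trace of) a function in a Smirnov/Hardy class on $\mathbb D$, while $F_2\circ\varphi_2$ is (the trace of) an \emph{anti-}holomorphic one; their a.e.\ coincidence on $\mathbb T$ then says a holomorphic function equals an antiholomorphic one on a set of full measure, which by the Luzin--Privalov uniqueness theorem forces both to be constant, and equal constants give only the trivial annihilating measure. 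Hence no nontrivial $\mu$ exists and $G$ is not $\mathcal L$-special — in harmony with Theorem~\ref{thm-vervog}.

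The main obstacle, as I see it, is the boundary-regularity bookkeeping in step~2 of the setup — precisely the place the excerpt flags as incompletely treated in \cite{Zai2002mn}. For a general Carath\'eodory domain $\varphi'$ need not be in $H^1$, need not even be nontangentially bounded, so one cannot naively say $F_j\circ\varphi_j'$ is in $H^1$; the correct formulation is the $\AC(T_{(j)}G)$ condition, and the delicate part is showing that the pairing $\langle\mu,p(T_{(j)}z)\rangle$ makes sense and vanishes using only absolute continuity of $F_j\circ\varphi_j$ on $\mathbb T$ together with the change-of-variables $dz = \varphi_j'\,d\zeta$ interpreted via Abel-type summation / integration by parts rather than pointwise. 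Handling this rigorously — establishing that the accessible boundary $\partial_a G$ is exactly where the boundary values of both sides are well defined, and that a.e.\ equality there is the right notion — is where the real work lies; once that identification is clean, part~1 is a formal duality statement and part~2 is the short orientation-plus-Luzin--Privalov argument above.
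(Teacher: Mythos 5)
Your part~1 follows essentially the paper's own route (duality, the representation $\mu=(b\circ\psi)\,\omega$ with $b\in H^1(\mathbb D)$ of measures orthogonal to $R(\overline G)$ via the conformal map, and passing to absolutely continuous primitives to produce $F_1,F_2$), and you correctly flag that the delicate point is the boundary identification. But be aware that what is needed there is not only integrability bookkeeping: one must prove that the boundary correspondence $\xi\mapsto\psi_\tau(T\varphi(\xi))$ between the two copies of $\mathbb T$ preserves the cyclic order (the paper's ``positive triplet'' argument, built from Jordan subdomains bounded by circular arcs and injectivity of $\varphi$ on accessible points), because only then do the two primitives, normalized at corresponding points, measure corresponding arcs and hence agree on $\partial_aG$. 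Your sketch silently assumes this compatibility.

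The genuine gap is in part~2. You claim that, since $T_{(1)}$ and $T_{(2)}$ have opposite orientations, $F_1\circ T_{(1)}\circ\varphi$ is (a trace of) a holomorphic function and $F_2\circ T_{(2)}\circ\varphi$ an antiholomorphic one, so Luzin--Privalov forces constancy. This is false for a general $\mathcal L\in\SE$: after the reduction one has $T_{(1)}z=\overline z$ but $T_{(2)}z=z_\tau=z-\tau\overline z$ with $\tau\in(0,1)$ in general, and $T_{(2)}$ is real-linear but not complex-linear, so $F_2(z_\tau)$ satisfies the Beltrami-type equation $\overline\partial u=-\tau\,\partial u$ and its composition with $\varphi$ is merely quasiregular, neither holomorphic nor antiholomorphic. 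Your argument only covers $\lambda_2=\overline{\lambda_1}$, i.e.\ operators already reducible to $\Delta$, which the paper treats as the trivial case. If instead you reparametrize each $T_{(j)}G$ by its own conformal map, so that both $F_j\circ\Phi_j$ are holomorphic with $\AC$ boundary traces, then the $\mathcal L$-special condition says these traces are intertwined by an orientation-reversing homeomorphism of $\mathbb T$ --- and no boundary uniqueness theorem excludes that by itself. The paper closes exactly this gap with an argument-principle computation: choosing the primitive $\varPhi$ to have zeros in $\mathbb D$ but none on $\mathbb T$, it evaluates $\int_{\partial_aG}d\mu(z)\big/F(\overline z)$ in two ways and gets $\tfrac1{2\pi}\Delta_{\mathbb T}\Arg\varPhi>0$ versus $-\tfrac1{2\pi}\Delta_{\mathbb T}\Arg\varPhi_\tau\leqslant0$, the sign flip coming precisely from the orientation reversal. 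Some quantitative input of this winding-number type is indispensable; orientation plus Luzin--Privalov does not suffice.
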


\begin{proof}
Let $\varphi$ be a conformal mapping from $\mathbb D$ onto $G$ and let $\psi$
be the respective inverse mapping. Without lost of generality we may assume
that $\varphi$ has angular boundary value at the point $1$. By virtue of
\cite{Pom1992book}, Propositions~2.14 and~2.17, we have
$\partial_aG=\{\varphi(\xi)\colon \xi\in\mathcal F(\varphi)\}$, where
$\mathcal F(\varphi)$ is the Fatou set of $\varphi$, that is the set of all
points $\xi\in\mathbb T$, where $\varphi$ has finite angular boundary values
$\varphi(\xi)$ according to the classical Fatou's theorem. As it was shown in
\cite{CarFed2005otaa}, $\partial_aG$ is a Borel set. In view of
\cite{CarFed2005otaa}, Corollary~1, the functions $\varphi$ and $\psi$ can be
extended to Borel measurable functions (denoted also by $\varphi$ and $\psi$)
on $\mathbb D\cup\mathcal F(\varphi)$ and $G\cup\partial_aG$ respectively in
such a way, that $\varphi(\psi(\zeta))=\zeta$ for all $\zeta\in\partial_aG$
and $\psi(\varphi(\xi))=\xi$ for all $\xi\in\mathcal F(\varphi)$.

Let $\omega$ be the measure on $\partial G$ defined by
$\omega:=\varphi(d\xi)$ (see \cite{CarFed2005otaa}, Section~3, for detailed
construction of this measure and its properties). In fact $\omega$ is a
measure on $\partial_aG$ and has no atoms. Moreover,
$|\omega(\cdot)|=2\pi\omega(\varphi(0),\cdot,G)$, where
$\omega(\varphi(0),\cdot,G)$ is the harmonic measure on $\partial G$
evaluated with respect to $\varphi(0)$ and $G$.

Taking into account Remark~\ref{rem-solspace} we assume that $\mathcal L$ is
already reduced to the form $c\mathcal L^\dag_\tau$, $\tau\in[0,1)$ in the
case where $\mathcal L$ is strongly elliptic, or to the form $c\mathcal
L_\tau$, $\tau\in(0,1)$, in the opposite case. Since the further
constructions are actually the same in both these cases, we will deal in
details only with the case $\mathcal L=\mathcal L_\tau$. Let us recall that
$z_\tau=z-\tau\overline{z}$ and $T\colon z\mapsto z_\tau$.

Suppose that $G$ is such that $A_\tau(\partial G,\overline{G}):=A_{\mathcal
L_\tau}(\partial G,\overline{G})\neq C(\partial G)$. It means that there
exists a (finite complex-valued Borel) measure $\mu$ on $\partial G$ which is
orthogonal to the space $A_\tau(\partial G,\overline{G})$. In view of
\eqref{eq-soltau-nse} the orthogonality of $\mu$ to $A_\tau(\partial
G,\overline{G})$ means that $\mu$ is orthogonal to the space
$R(\overline{G})$ consisting of functions which can be approximated uniformly
on $\overline{G}$ by rational functions in the complex variable with poles
lying outside $\overline{G}$ and to the space $\{h(z_\tau)\colon h\in
R(T\overline{G})\}$. The orthogonality of $\mu$ to $R(\overline{G})$ means
that $\int f\,d\mu=0$ for every $f\in R(\overline{G})$.

Since $\mu$ is orthogonal to $R(\overline{G})$, then, according to
\cite{CarFed2005otaa}, Theorem~2, there exists a function $b$ belonging to
the Hardy space $H^1(\mathbb D)$ in the unit disk such that
$$
\mu=(b\circ\psi)\,\omega.
$$
Let $\varPhi$ be some primitive to $b$ in $\mathbb D$. According to
\cite{Pri1950book}, Section~II.5.7, $\varPhi\in C(\overline{\mathbb D})$ and
$\varPhi$ is absolutely continuous on $\mathbb T$. Moreover, for any point
$\xi\in\mathbb T$ we have $\varPhi(\xi)=\varPhi(1)+\nu(\varUpsilon_\xi)$,
where $\nu$ is the measure on $\mathbb T$ defined by the setting
$d\nu=b\,d\xi$, and $\varUpsilon_\xi$ is the arc of $\mathbb T$ running from
$1$ to $\xi$ in the positive direction. Finally, let $F(z)=\varPhi(\psi(z))$,
$z\in G$. Since $G$ is a Carath\'eodory domain, for any point
$\zeta\in\partial_aG$ there exists a unique point $\xi\in\mathbb T$ such that
$\xi\in\mathcal F(\varphi)$ and $\varphi(\xi)=\zeta$, see
\cite{CarFed2005otaa}, Proposition~1. Therefore, $F$ is well-defined on
$\partial_aG$ and $F\in\AC(G)$.

Next we do the same thing for the domain $G_\tau:=TG$, which is a
Carath\'eodory one, and for which it holds $\partial_aG_\tau=T(\partial_aG)$.
Let $\varphi_\tau$ be some conformal mapping from $\mathbb D$ onto $G_\tau$
such that $1\in\mathcal F(\varphi_\tau)$ and $T\varphi(1)=\varphi_\tau(1)$.
As previously we denote by $\psi_\tau$ the inverse mapping for $\varphi_\tau$
in $G_\tau$ and by $\omega_\tau$ the measure $\varphi_\tau(d\xi)$ on
$\partial G_\tau$. Furthermore, let $\mu_\tau$ be the measure on $\partial
G_\tau$ defined by the setting $\mu_\tau(E)=\mu(T^{-1}E)$, where $E$ is a
Borel set. It is clear that $\mu_\tau$ is orthogonal to
$R(\overline{G_\tau})$.

Repeating the construction of $b$, $\varPhi$, $\nu$ and $F$ given above,
using $\mu_\tau$, $\varphi_\tau$ and $\psi_\tau$ instead of $\mu$, $\varphi$,
and $\psi$, respectively, we obtain the function $b_\tau\in H^1(\mathbb D)$
such that $\mu_\tau=(b_\tau\circ\psi_\tau)\,\omega_\tau$, and take
$\varPhi_\tau$ to be the primitive of $b_\tau$ such that
$\varPhi_\tau(1)=\varPhi(1)$. We have used here the fact that
$\varPhi_\tau\in C(\overline{\mathbb D})$. Finally we put
$F_\tau(w)=\varPhi_\tau(\psi_\tau(w))$, $w\in G_\tau$, so that
$F_\tau\in\AC(G_\tau)$.

It remains to show, how the functions $F$ and $F_\tau$ are related to each
other. Let $\varGamma$ is a closed Jordan curve and let $\varOmega$ be the
domain bounded by $\varGamma$. Take three points
$\zeta_1,\zeta_2,\zeta_3\in\varGamma$. One says that a triplet
$(\zeta_1,\zeta_2,\zeta_3)$ is positive with respect to $\varOmega$, if
$\zeta_2$ lies on the arc of $\varGamma$ running from $\zeta_1$ to $\zeta_3$
in the positive direction on $\varGamma$ (with respect to $\varOmega$).

Let $\alpha,\xi\in\mathbb T\cap\mathcal F(\varphi)$ are such that the triplet
$(1,\alpha,\xi)$ is positive with respect to $\mathbb D$. Let
$\alpha'=\psi_\tau(T\varphi(\alpha))$ and $\xi'=\psi_\tau(T\varphi(\xi))$.

We claim that the triplet $(1,\alpha',\xi')$ is also positive with respect to
$\mathbb D$. Assuming this claim already proved, let us finish the proof of
Theorem~\ref{thm-tz}. Since the triplets $(1,\alpha,\xi)$ and
$(1,\alpha',\xi')$ are both positive with respect to $\mathbb D$, and since
the sets $\mathcal F(\varphi)$ and $\mathcal F(\varphi_\tau)$ are everywhere
dense in $\mathbb T$, then for the arc $\varUpsilon_\xi$ defined above we
have
\begin{equation}\label{eq-x1}
\psi_\tau(T\varphi(\varUpsilon_\xi\cap\mathcal F(\varphi)))=
\varUpsilon_{\xi'}\cap\mathcal F(\varphi_\tau).
\end{equation}
Using this equality we have that for $\zeta=\varphi(\xi)\in\partial_aG$ and
$\xi'=\psi_\tau(T\zeta)$ it holds
\begin{align*}
F_\tau(T\zeta)-\varPhi_\tau(1)& =\varPhi_\tau(\xi')-\varPhi_\tau(1)= %
\nu_\tau(\varUpsilon_{\xi'}\cap\mathcal F(\varphi_\tau))\\
&=\mu_\tau(\varphi_\tau(\varUpsilon_{\xi'}\cap\mathcal F(\varphi_\tau)))= %
\mu_\tau(T\varphi(\varUpsilon_\xi\cap\mathcal F(\varphi)))\\
& = \mu(\varphi(\varUpsilon_\xi\cap\mathcal F(\varphi)))
= \nu(\varUpsilon_\xi\cap\mathcal F(\varphi)) = \varPhi(\xi)-\varPhi(1)\\
&= F(\zeta)-\varPhi(1).
\end{align*}
Since $\varPhi_\tau(1)=\varPhi(1)$ we finally have that
$F_\tau(T\zeta)=F(\zeta)$ for every $\zeta\in\partial_aG$, as demanded in
Definition~\ref{dfn-lspec} and Theorem~\ref{thm-tz}.

It remains to prove our claim stating that the triplet $(1,\alpha',\xi')$ is
positive whenever the initial triplet $(1,\alpha,\xi)$ is positive (with
respect to $\mathbb D$).

For two points $p,q\in\mathbb T$ let $C_{p,q}$ be some circular arc belonging
to $\mathbb D\cup\{p,q\}$ that starts at $p$, ends at $q$ and intersects
$\mathbb T$ at both $p$ and $q$ non-tangentially. Let $\Omega_{1,\alpha,\xi}$
be the Jordan domain bounded by the closed Jordan curve
$\varGamma_{1,\alpha,\xi}=C_{1,\alpha}\cup C_{\alpha,\xi}\cup C_{\xi,1}$,
where we assume that $C_{1,\alpha}$, $C_{\alpha,\xi}$ and $C_{\xi,1}$ are
taken in such a way that $\varGamma_{1,\alpha,\xi}$ is a closed Jordan curve,
and $0\in\Omega_{1,\alpha,\xi}$. It is clear that the triplet
$(1,\alpha,\xi)$ is positive with respect to $\Omega_{1,\alpha,\xi}$.

Since $\alpha$ and $\xi$ are Fatou points for $\varphi$, and since all three
arcs $C_{1,\alpha}$, $C_{\alpha,\xi}$ and $C_{\xi,1}$ intersects $\mathbb T$
non-tangentially, we have $\varphi\in C(\overline\Omega_{1,\alpha,\xi})$.
Moreover, since $G$ is a Carath\'eodory domain, then for each point
$\zeta\in\partial_aG$ there exists a unique point $\eta\in\mathcal
F(\varphi)$ such that $\varphi(\eta)=\zeta$ (see \cite{CarFed2005otaa},
Proposition~1), and hence $\varphi$ is injective on
$\overline\Omega_{1,\alpha,\xi}$. Finally, in view of the Carath\'eodory
extension theorem (see, for instance, \cite{Pom1992book}, Theorem~2.6) the
domain $\varphi(\Omega_{1,\alpha,\xi})\subset G$ is a Jordan domain.
Furthermore, the triplet $(\varphi(1),\varphi(\alpha),\varphi(\xi))$ is
positive with respect to $\varphi(\Omega_{1,\alpha,\xi})$ (it can be readily
verified by direct computation of index of the curve
$\varphi(\varGamma_{1,\alpha,\xi}$ with respect to the point
$\varphi(0)\in\varphi(\Omega_{1,\alpha,\xi})$). Since $T$ is sense-preserving
mapping, then the triplet $(\varphi_\tau(1),\varphi(\alpha'),\varphi(\xi'))=
(T\varphi(1),T\varphi(\alpha),T\varphi(\xi))$ is positive with respect to the
domain $T\varphi(\Omega_{1,\alpha,\xi})$.

Let us consider the domain
$\Omega'=\psi_\tau(T\varphi(\Omega_{1,\alpha,\xi}))\subset\mathbb D$. It is
clear that $\partial\Omega'\subset\mathbb D\cup\{1,\alpha',\xi'\}$. Repeating
the arguments used above we conclude that $\Omega'$ is a Jordan domain and
$\varphi_\tau$ is continuous and injective on $\overline{\Omega'}$. Therefore
the triplet $(1,\alpha',\xi')$ is positive with respect to $\mathbb D$, as it
was claimed.

Let us briefly explain how to proceed in the remaining case, namely in the
case when $\mathcal L$ is reduced to the form $c\mathcal L^\dag_\tau$. For a
given set $E\subset\mathbb C$ we put $E_C=\{\overline{z}\colon z\in E\}$.
Similarly to the previous case we consider some measure $\mu$ on $\partial G$
orthogonal to $A_{\mathcal L^\dag_\tau}(\partial G,\overline{G})$. In view of
\eqref{eq-soltau-se} it means that $\mu$ is orthogonal to the space
$\{g(\overline{z})\colon g\in R(\overline{G_C})\}$ and to the space
$\{h(z_\tau)\colon h\in R(\overline{G_\tau})\}$. The first orthogonality
condition yields that the measure $\mu_C$ defined by the setting
$\mu_C(E)=\mu(E_C)$ on $\partial G_C$ is orthogonal to $R(\overline{G_C})$.

Now we can repeat the constructions of $F$ and $F_\tau$ given above using
$\mu_C$ instead of $\mu$ and keeping $\mu_\tau$ unchanged. Doing this we
obtain the functions $F\in\AC(G_C)$ and $F_\tau\in\AC(G_\tau)$ such that
$-F(\overline{z})=F_\tau(z_\tau)$ for all $z\in\partial_aG$. The negative
sign at the left-hand side of this equality is related with the following
circumstance. Let $(1,\alpha,\xi)$ be a positive (with respect to $\mathbb
D$) triplet of points in $\mathbb T\cap\mathcal F(\varphi)$, let $\varphi_C$
be some conformal mapping from $\mathbb D$ onto $G_C$ such that $1\in\mathcal
F(\widetilde\varphi)$ and $C\varphi(1)=\widetilde\varphi(1)$, and let
$\psi_C=\varphi_C^{-1}$. Then the triplet $(1,\alpha'',\xi'')$, where
$\alpha''=\psi_C(\overline{\varphi(\alpha)})$ and
$\xi''=\psi_C(\overline{\varphi(\xi)})$, is a negative triplet (since the
mapping $z\mapsto\overline{z}$ reverses orientation), and hence
$\psi_C(\varphi(\varUpsilon_\xi\cap\mathcal F(\varphi))_C)= \mathbb
T\setminus(\varUpsilon_{\xi''}\cap\mathcal F(\varphi_\tau))$. It remains to
note that $\mu_C(\partial_aG_C)=0$ in view of orthogonality of $\mu_C$ to
constants.

To prove the second statement let us observe that the function $\varPhi$ can
be chosen in such a way that is has zeros on $\mathbb D$, but it has no zeros
on $\mathbb T$ (it is enough to chose a suitable value of $\varPhi(1)$). It
can be readily verified (see \cite{Zai2002mn}, the proof of Corollary~1, for
details), that the function $1/F(\overline{z})$ in $\mu$-integrable over
$\partial_aG$ and
$$
\int_{\partial_aG}\dfrac{d\mu(z)}{F(\overline{z})}=
\int_{\partial_aG_C}\dfrac{d\mu_C(w)}{F(w)}=
\int_{\mathbb T}\dfrac{d\varPhi(\xi)}{\varPhi(\xi)}=
\frac1{2\pi}\Delta_{\mathbb T}\Arg(\varPhi)>0,
$$
according to out assumption on $\varPhi$. On the other hand,
$$
\int_{\partial_aG}\dfrac{d\mu(z)}{F(\overline{z})}=
-\int_{\partial_aG_\tau}\dfrac{d\mu_\tau(w)}{F_\tau(w)}=
-\int_{\mathbb T}\dfrac{d\varPhi_\tau(\xi)}{\varPhi_\tau(\xi)}=
-\frac1{2\pi}\Delta_{\mathbb T}\Arg(\varPhi_\tau)\leqslant0,
$$
which is a clear contradiction. Therefore, there are no $\mathcal L$-special
domains in the strongly elliptic case. The proof is completed.
\end{proof}

In the case where $\mathcal L\in\NSE$ the concept of a $\mathcal L$-special
domain is quite poorly studied. The important fact is that $\mathcal
L$-special domains exist for any such $\mathcal L$, but only a few explicit
examples of such domains are known. Dealing with the concept of a $\mathcal
L$-special domain, let us refer to \cite{Zai2004izv} where some simple
statements are obtained that allow one to conclude that a given domain with
certain peculiar properties of the boundary is not $\mathcal L$-special for
every $\mathcal L\in\NSE$.

The following result is the direct corollary of Theorems~\ref{thm-bgp+z}
and~\ref{thm-tz}.

\begin{theorem}
Let $X\subset\mathbb C$ be a Carath\'eodory compact set, and $\mathcal
L\in\SE$. Then $P_{\mathcal L}(X)=C_{\mathcal L}(X)$.
\end{theorem}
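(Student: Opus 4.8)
The plan is to obtain the statement as an immediate corollary of Theorems~\ref{thm-bgp+z} and~\ref{thm-tz}, with essentially no new analysis. The first point to record is that every operator $\mathcal L\in\SE$ has distinct characteristic roots: indeed, if $\lambda_1=\lambda_2$ then $\sign\Im\lambda_1=\sign\Im\lambda_2$, contradicting the definition of $\SE$. Hence Theorem~\ref{thm-tz}, both parts of which are stated for operators with $\lambda_1\neq\lambda_2$, applies to the operator $\mathcal L$ under consideration.

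Next I would reduce to the case when $\mathbb C\setminus X$ is disconnected. If $\mathbb C\setminus X$ is connected, then $\widehat X=X$, so $\Int(\widehat X)=X^{\circ}$ and no connected component of $\Int(\widehat X)$ meets $\mathbb C\setminus X$; thus the equality $P_{\mathcal L}(X)=C_{\mathcal L}(X)$ is classical and also follows from the Carath\'eodory-compact form of Theorem~\ref{thm-bgp+z} recorded in the discussion preceding Theorem~\ref{thm-tz}, the condition appearing there being vacuous. So assume $\mathbb C\setminus X$ is disconnected. By that same discussion, the equality $P_{\mathcal L}(X)=C_{\mathcal L}(X)$ holds if and only if $A_{\mathcal L}(\partial G,\overline G)=C(\partial G)$ for every bounded connected component $G$ of $\mathbb C\setminus X$; moreover, again by the text preceding Theorem~\ref{thm-tz}, every such component $G$ is a Carath\'eodory domain.

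It then remains to check, for each such $G$, the equality $A_{\mathcal L}(\partial G,\overline G)=C(\partial G)$. By part~2 of Theorem~\ref{thm-tz}, since $\mathcal L\in\SE$ the Carath\'eodory domain $G$ is not $\mathcal L$-special; by part~1 of Theorem~\ref{thm-tz} this is precisely the condition ensuring $A_{\mathcal L}(\partial G,\overline G)=C(\partial G)$. Combining these, $P_{\mathcal L}(X)=C_{\mathcal L}(X)$, as asserted. There is no genuine obstacle in this argument: all the substance has already been absorbed into Theorems~\ref{thm-bgp+z} and~\ref{thm-tz}, and in particular into the argument-principle computation at the end of the proof of Theorem~\ref{thm-tz} (the incompatibility of $\Delta_{\mathbb T}\Arg(\varPhi)>0$ with $\Delta_{\mathbb T}\Arg(\varPhi_\tau)\leqslant0$) which is what actually excludes $\mathcal L$-special Carath\'eodory domains in the strongly elliptic case. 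The only minor things one must not overlook are the applicability of Theorem~\ref{thm-tz}, i.e. distinctness of the characteristic roots (checked above), and the fact, recorded before Theorem~\ref{thm-tz}, that the bounded complementary components of a Carath\'eodory compact set are themselves Carath\'eodory domains.
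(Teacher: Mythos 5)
Your proposal is correct and follows essentially the paper's own route: the paper presents this theorem as a direct corollary of Theorems~\ref{thm-bgp+z} and~\ref{thm-tz}, which is precisely the combination you spell out (reduction to $A_{\mathcal L}(\partial G,\overline{G})=C(\partial G)$ for the bounded complementary components, then part~2 and part~1 of Theorem~\ref{thm-tz}, noting that $\mathcal L\in\SE$ forces $\lambda_1\neq\lambda_2$). Your explicit bookkeeping of the distinct-roots condition and of the connected-complement case only makes precise what the paper leaves implicit in its Carath\'eodory-compact reformulation of Theorem~\ref{thm-bgp+z}.
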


It means that the sufficient approximability condition similar to the one
stated in the Walsh--Lebesgue theorem remains valid for general strongly
elliptic second order operators. The question whether this sufficient
approximability condition is also a necessary one in the case of general
$\mathcal L\in\SE$ is still open. The following conjecture which was posed in
\cite{ParFed1999sbm}, Conjecture~4.1~(2), and which is still open in the
general case, asserts that the proclaimed result is true.
\begin{conjecture}\label{con-approx-strongell}
Let $\mathcal L\in\SE$, and let $X$ be a compact set in $\mathbb C$. Then
$P_{\mathcal L}(X)=C_{\mathcal L}(X)$ if and only if $X$ is a Carath\'eodory
compact set.
\end{conjecture}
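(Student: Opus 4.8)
The plan is to split the biconditional into its two implications and to treat them quite asymmetrically, since one of them is already in hand. The \emph{sufficiency} half --- if $X$ is a Carath\'eodory compact set then $P_{\mathcal L}(X)=C_{\mathcal L}(X)$ --- is precisely the corollary of Theorems~\ref{thm-bgp+z} and~\ref{thm-tz} stated just above the conjecture. Indeed, when $X$ is Carath\'eodory the reductive criterion Theorem~\ref{thm-bgp+z} reduces the equality to checking $A_{\mathcal L}(\partial G,\overline G)=C(\partial G)$ on each bounded complementary component $G$, each of which is a Carath\'eodory domain with $\overline G\cap X=\partial G$; and since no Carath\'eodory domain is $\mathcal L$-special for $\mathcal L\in\SE$ (the second statement of Theorem~\ref{thm-tz}), this holds for every such $G$. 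Hence the entire remaining content of the conjecture is the \emph{necessity} half, which I would attack in its contrapositive form: if $X$ is not a Carath\'eodory compact set, then $P_{\mathcal L}(X)\neq C_{\mathcal L}(X)$.

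For the necessity I would follow the model furnished by the Walsh--Lebesgue theorem (Theorem~\ref{thm-wl}) in the harmonic case. Since $X$ fails to be Carath\'eodory we have $\partial X\neq\partial\widehat X$; as $\partial\widehat X\subseteq\partial X$ always holds, there is a point $p\in\partial X\cap\Int(\widehat X)$, which forces the complement to be disconnected and places $p$ on the boundary of some bounded complementary component (hole) $D$. In the harmonic case the obstruction is $\log|z-z_0|$ with $z_0\in D$: it lies in $C_\Delta(X)$ but not in $P_\Delta(X)$, because any harmonic-polynomial approximant would force the conjugate period around $D$ to vanish. My first route would be to produce the analogous object for $\mathcal L\in\SE$: writing $\mathcal L$ in the reduced form $\mathcal L^\dag_\tau=\partial\partial_\tau$ and using the representation \eqref{eq-soltau-se}, $f(z)=h(z_\tau)+g(\overline z)$, I would assemble $f$ from logarithmic branches $h$ and $g$ with singularity placed inside $D$, so that the resulting $\mathcal L$-analytic function on a neighbourhood of $X$ carries a nonzero monodromy around the hole and therefore cannot be a uniform limit on $X$ of globally single-valued $\mathcal L$-analytic polynomials.

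Equivalently, in the dual formulation I would seek a measure $\mu$ supported near the hidden boundary with $\mu\perp P_{\mathcal L}(X)$ but $\mu\not\perp C_{\mathcal L}(X)$; concretely $\mu$ must annihilate all polynomials in $z_\tau$ and all polynomials in $\overline z$ while failing to annihilate the obstruction $f$ above. The natural reduction is again through Theorem~\ref{thm-bgp+z}: it would suffice to find a single component $G$ of $\Int(\widehat X)$ meeting $\mathbb C\setminus X$ for which $A_{\mathcal L}(\overline G\cap X,\overline G)\neq C_{\mathcal L}(\overline G\cap X)$, and the component through the hidden point $p$ is the candidate. \textbf{Here lies the main obstacle}, and it is exactly what keeps the conjecture open: after the reduction the relevant set $\overline G\cap X$ need no longer coincide with $\partial G$, so Theorem~\ref{thm-tz} --- which characterises $A_{\mathcal L}$ only when the approximation set is the \emph{full} boundary of a Carath\'eodory domain, through the $\mathcal L$-special condition --- no longer applies. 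For general $\mathcal L\in\SE$ the class $\mathcal O_\tau$ is neither conformally nor M\"obius invariant and lacks the holomorphic-module structure available for $\Delta$ and $\overline\partial^2$, so neither the conformal-period argument nor the annihilating-measure construction transfers verbatim; supplying a characterisation of $A_{\mathcal L}(\overline G\cap X,\overline G)$ valid for these genuinely two-sided approximation sets is precisely the missing ingredient, which is why I would expect this step, rather than the sufficiency, to absorb all the difficulty.
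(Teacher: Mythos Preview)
The statement is labelled a \emph{Conjecture} in the paper and is explicitly declared ``still open in the general case''; there is no proof in the paper against which to compare your proposal. Your handling of the sufficiency half is correct and matches the paper exactly: it is the theorem stated immediately before the conjecture, obtained from Theorems~\ref{thm-bgp+z} and~\ref{thm-tz}.

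For the necessity half you rightly present a sketch rather than a proof and flag the gap yourself, but the obstacle you single out differs from the one the paper identifies. You localise via Theorem~\ref{thm-bgp+z} and then observe that Theorem~\ref{thm-tz} is unavailable because after localisation $\overline G\cap X$ need not equal $\partial G$. The paper, by contrast, does not route necessity through Theorem~\ref{thm-bgp+z} at all; it points to Conjecture~\ref{con-strongell} --- $\mathcal L$-regularity of \emph{every} bounded simply connected domain for $\mathcal L\in\SE$ --- as the missing ingredient. The reason is that the classical necessity argument for $\Delta$ runs through the maximum principle: uniform limits on $X$ of harmonic polynomials extend harmonically across $\Int(\widehat X)$, which rules out any $f\in C_\Delta(X)$ that cannot so extend. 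For general $\mathcal L\in\SE$ this extension step would follow from a weak maximum principle, hence from $\mathcal L$-regularity plus uniqueness, and that is exactly Conjecture~\ref{con-strongell}. Your logarithmic\slash period obstruction runs into the same wall from the other side: even once you assemble a single-valued $\mathcal L$-analytic function with a ``period'' around the hole, the functional detecting that period involves a derivative and is not continuous on $C(X)$; to upgrade uniform convergence on $X$ to control inside $\widehat X$ you again need the very maximum-type principle that is missing. So your diagnosis that all the difficulty lives in the necessity is correct, but the paper locates the bottleneck at $\mathcal L$-regularity (Conjecture~\ref{con-strongell}) rather than at an extension of Theorem~\ref{thm-tz} to two-sided approximation sets.
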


Note, that the statement of this conjecture has sense for any operator
$\mathcal L\in\NSE$, but the corresponding result is certainly failed.
Indeed, for every such $\mathcal L$ one can find a compact set $X$ (the union
of the some ellipse and its center) which is not a Carath\'eodory compact
set, but $C(X)=P_{\mathcal L}(X)$, see \cite{ParFed1999sbm}, Section~4. This
is related to the lack of solvability and the non-uniqueness of solutions of
the $\mathcal L$-Dirichlet problem in the corresponding ellipse.

The inverse statement in Conjecture~\ref{con-approx-strongell} is very
interesting open question. Of course it has an affirmative answer for every
operator $\mathcal L$ with complex conjugate characteristic roots (every such
case can be reduced to the harmonic one by means of suitable non degenerate
real-linear transformation of the plane). But in the general case it is
rather incomprehensible how to proof this result until it would be proved the
following quite plausible conjecture.
\begin{conjecture}\label{con-strongell}
For every $\mathcal L\in\SE$ any bounded simply connected domain
$G\subset\mathbb C$ is $\mathcal L$-regular.
\end{conjecture}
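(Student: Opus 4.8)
The plan is to attack Conjecture~\ref{con-strongell} by an exhaustion-plus-barrier scheme modelled on Lebesgue's proof of Theorem~\ref{thm-lebesgue}, using the Verchota--Vogel result (Theorem~\ref{thm-vervog}) for the inner approximating domains. By Remark~\ref{rem-solspace} one may assume $\mathcal L=\mathcal L^\dag_\tau=\partial\partial_\tau$ with $\tau\in[0,1)$; the case $\tau=0$ is Theorem~\ref{thm-lebesgue}, so fix $\tau\in(0,1)$, a bounded simply connected domain $G$, and a conformal map $\varphi\colon\mathbb D\to G$. For $r\uparrow1$ the domains $G_r=\varphi(D(0,r))$ are Jordan domains with analytic boundaries, hence Lipschitz domains bounded by a single $C^1$ curve, so each $G_r$ is $\mathcal L^\dag_\tau$-regular by Theorem~\ref{thm-vervog}. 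Given $\psi\in C(\partial G)$, extend it by the Tietze theorem to $\widetilde\psi\in C(\overline G)$ with $\|\widetilde\psi\|_{\overline G}=\|\psi\|_{\partial G}$, and let $f_r\in C(\overline{G_r})\cap\mathcal O^\dag_\tau(G_r)$ solve the $\mathcal L^\dag_\tau$-Dirichlet problem in $G_r$ with boundary data $\widetilde\psi|_{\partial G_r}$.

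The second step is a uniform bound $\|f_r\|_{\overline{G_r}}\leqslant C\|\psi\|_{\partial G}$ with $C$ independent of $r$ --- a weak maximum modulus estimate, uniform along the exhaustion. Granting it, the constant-coefficient interior elliptic estimates for $\mathcal L^\dag_\tau$ (its solutions are real-analytic, with Cauchy-type bounds on derivatives) make $\{f_r\}$ normal on compact subsets of $G$; since $\mathcal O^\dag_\tau(G)$ is closed under locally uniform convergence, a subsequence $f_{r_j}\to f$ locally uniformly on $G$ with $f\in\mathcal O^\dag_\tau(G)$. The third step is to show that $f$ extends continuously to $\overline G$ with $f|_{\partial G}=\psi$, which, as in Lebesgue's argument, reduces to a \emph{local} statement at each point $\zeta_0\in\partial G$: the existence of an $\mathcal L^\dag_\tau$-\emph{barrier} there, i.e.\ a nonnegative function continuous on $\overline G$, vanishing only at $\zeta_0$, that dominates the oscillation of admissible solutions near $\zeta_0$, so that one may sandwich $f_{r_j}-\psi(\zeta_0)$ on $\partial(G_{r_j}\cap D(\zeta_0,\rho))$ and pass to the limit. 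Since $\mathbb C\setminus G$ is connected, $\zeta_0$ carries a classical \emph{harmonic} barrier (as in the proof of Theorem~\ref{thm-lebesgue}), and the task is to upgrade it: writing $\mathcal L^\dag_\tau=\tfrac14\Delta+\tau\partial^2$ (strongly elliptic since $\tau<1$), one would treat $\tau\partial^2$ as a perturbation of $\tfrac14\Delta$ controlled by $1-\tau$, and use the representation $f=h(z_\tau)+g(\bar z)$ with near-boundary estimates for $h,g$ in the spirit of Lemma~\ref{lem-solest} to reduce the boundary-value matching to Fatou-type boundary-limit statements for the holomorphic pieces in the coordinates $z_\tau$ and $\bar z$.

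An essentially equivalent, and perhaps more workable, reformulation is the conformal pullback: $F=f\circ\varphi$ satisfies on the \emph{fixed} disk $\mathbb D$ the equation
\[
\frac{\varphi'}{\overline{\varphi'}}\,\partial_w\partial_{\bar w}F+\tau\,\partial_w^2F-\frac{\tau\varphi''}{\varphi'}\,\partial_wF=0,
\]
whose principal part is strongly elliptic with ellipticity constant depending only on $\tau$ (the leading coefficient $\varphi'/\overline{\varphi'}$ is unimodular), while the entire geometry of $\partial G$ is concentrated in the single lower-order coefficient $-\tau\varphi''/\varphi'=-\tau(\log\varphi')'$. One is thus led to solve on $\mathbb D$, with continuous boundary data $\psi\circ\varphi$, a strongly elliptic equation whose first-order coefficient may be singular at $\mathbb T$ at a rate reflecting the irregularity of $\partial G$ --- and one would try to exploit that this coefficient is the logarithmic derivative of $\varphi'$, which belongs to a Hardy space $H^p$ for some $p>0$. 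A third possible route is a continuity method in $\tau\in[0,1)$ anchored at $\tau=0$ by Theorem~\ref{thm-lebesgue}.

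In all three approaches the obstacle is the same, and it is the hard part: a weak maximum modulus principle for $\mathcal L^\dag_\tau$ holding uniformly over the family of domains in play (Step~2), together with --- and inseparable from it --- the construction of $\mathcal L^\dag_\tau$-barriers at \emph{every} boundary point of an arbitrary bounded simply connected domain (Step~3); the $\tau$-continuity variant requires precisely the uniform-in-$\tau$ analogue of these. Unlike the Laplacian, $\mathcal L^\dag_\tau$ with $\tau>0$ has genuinely complex coefficients --- its real and imaginary parts solve a coupled strongly elliptic system --- so neither a maximum modulus principle nor the existence of supersolution-type barriers comes for free, and the decomposition $f=h(z_\tau)+g(\bar z)$ does not split $f$ into pieces continuous up to $\partial G$ (the Riesz projection is unbounded on $C(\mathbb T)$). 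Overcoming this will likely require either an honest maximum-principle argument valid on rough domains, or a layer-potential construction extending the Verchota--Vogel analysis from boundaries composed of finitely many $C^1$-arcs to arbitrary Jordan, and then arbitrary simply connected, boundaries.
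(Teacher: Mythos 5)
This statement is posed in the paper as an open conjecture, not a theorem: the authors explicitly note that Theorem~\ref{thm-vervog} settles it only for Lipschitz domains with piecewise $C^1$ boundary and that this ``is not enough to prove Conjecture~\ref{con-approx-strongell} in its full generality.'' So there is no paper proof to compare against, and your text, read as a proof attempt, has a genuine gap --- one you yourself identify. The exhaustion scheme (Steps~1 and the normal-families argument) is routine; the entire content of the conjecture sits in your Step~2 (a weak maximum modulus estimate $\|f_r\|_{\overline{G_r}}\leqslant C\|\psi\|_{\partial G}$ with $C$ independent of $r$) and Step~3 (an $\mathcal L^\dag_\tau$-barrier at every boundary point of an arbitrary simply connected domain), and neither is established anywhere, in your proposal or in the literature the paper cites. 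For $\tau>0$ the operator $\mathcal L^\dag_\tau=\frac14\Delta+\tau\partial^2$ has genuinely complex coefficients, i.e.\ its real and imaginary parts form a coupled strongly elliptic system, and no maximum principle, comparison principle, or supersolution-type barrier theory is available; Verchota--Vogel's weak maximum principle (their Theorem~7.3) is proved via layer potentials on Lipschitz domains with finitely many $C^1$ boundary arcs, with a constant depending on the Lipschitz character, and there is no known way to make it uniform along an exhaustion $G_r=\varphi(D(0,r))$ of a rough domain, since the Lipschitz characters of the $G_r$ typically blow up as $r\to1$. Your other two routes (the conformal pullback, whose computation is correct, and the continuity method in $\tau$) relocate but do not remove the same difficulty: the pulled-back equation has a first-order coefficient $-\tau(\log\varphi')'$ that is merely $H^p$-singular at $\mathbb T$, outside any class for which boundary regularity of the Dirichlet problem is known, and the $\tau$-continuity method needs exactly the uniform-in-$\tau$ a priori bound you do not have. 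In short, what you have written is a reasonable research program reproducing the known reductions, but the statement remains open and your proposal does not close it.
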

By Theorem~\ref{thm-vervog} this conjecture is true for Jordan domains
bounded by sufficiently regular curves, but it is not enough to prove
Conjecture~\ref{con-approx-strongell} in its full generality.

Let $\mathcal L\in\Ell$. We are going now to discuss the question on whether
the property of $\mathcal L$-regularity of a given domain $G\subset\mathbb C$
and the uniqueness property in $\mathcal L$-Dirichlet problem in $G$ can be
fulfilled simultaneously. We pay attention to this question because of its
connection with the weak maximum modulus principle for $\mathcal L$-analytic
functions. There are several different concepts referred as a weak maximum
modulus principle. We are dealing with the following one.

\begin{definition}
One says that a bounded simply connected domain $G\subset\mathbb C$ satisfies
the weak maximum modulus principle for $\mathcal L$, if there exists a number
$C(G,\mathcal L)>0$ such that for every function $f\in
C(\overline{G})\cap\mathcal O(G,\mathcal L)$ the inequality is satisfied
$$
\max\limits_{z\in\overline{G}}|f(z)|\leqslant C(\mathcal
L,G)\max\limits_{z\in\partial G}|f(z)|.
$$
\end{definition}

As an immediate consequence of the open mapping theorem one can see, that for
any domain $G$ which is $\mathcal L$-regular and possesses the uniqueness
property for $\mathcal L$-Dirichlet problem, the weak maximum modulus
principle for $\mathcal L$ is also took place. As usual, one says that $G$
possesses the uniqueness property for $\mathcal L$-Dirichlet problem, if for
every $h\in C(\partial G)$ the function $f\in C(\overline{G})\cap
O(G,\mathcal L)$ with $f|_{\partial G}=h$ is uniquely determined.

Furthermore, in many instances the weak maximum modulus principle for
$\mathcal L$ is an important tool to prove the property of $\mathcal
L$-regularity of $G$. Thus, in \cite{VerVog1997tams} the result stated in
Theorem~\ref{thm-vervog} above was proved using a certain version of the weak
maximum modulus principle for $\mathcal L$ in the domain under consideration,
see \cite{VerVog1997tams}, Theorem~7.3.

We are going to explain that for every $\mathcal L\in\NSE$ the weak maximum
modulus principle for $\mathcal L$ is certainly failed in a sufficiently wide
range of domains. This fact is a consequence of the results about uniform
$\mathcal L$-analytic polynomial approximation, which are considered here.

Let us start with the simple case when we do not need any special
approximation results to analyze the situation. Let $\mathcal
L=\overline\partial^2$ and let $G$ be an arbitrary bounded simply connected
domain in $\mathbb C$. Assume that $G$ is $\overline\partial^2$-regular and,
simultaneously, possesses the uniqueness property for
$\overline\partial^2$-Dirichlet problem. Take a function
$h=\dfrac1{z-a}\Big|_{\partial G}$ with $a\in G$ and consider the function
$f\in C(\overline{G})\cap\mathcal O(G,\overline\partial^2)$ such that
$f|_{\partial G}=h$. Then the function $f(z)(z-a)-1$ is also belonging to
$C(\overline{G})\cap\mathcal O(G,\overline\partial^2)$ and vanishes on
$\partial G$. Then, in view of the uniqueness property, $f(z)(z-a)-1=0$
identically in $G$, but it is failed, for instance, at the point $a$.

The given arguments in the bianalytic case are very short and simple, but
they seems to be not appropriate in a more general case. Indeed, the class
$\mathcal O(G,\overline\partial^2)$ of bianalytic functions in $G$ has a
structure of a holomorphic module (we are able to multiply bianalytic
functions to holomorphic ones keeping the class), but it is not the case for
more general $\mathcal L$.

In the case of general operator $\mathcal L\in\NSE$ we will use a different
construction. It is based on the following lemma.

\begin{lemma}\label{pro-Zaitsev}
Let $\mathcal L\in\NSE$, and let $G$ be a Carath\'eodory domain in $\mathbb
C$ such that $A_{\mathcal L}(\partial G,\overline{G})=C(\partial G)$. Then
for every point $z_0\in G$ and for $Y=\partial G\cup\{z_0\}$ it holds
$A_{\mathcal L}(Y,\overline{G})=C(Y)$.
\end{lemma}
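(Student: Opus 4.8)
The plan is to pass to the dual problem and, for $\tau>0$, reduce the statement to the nonexistence of a representing measure for $z_0$ within the class of $\mathcal L_\tau$-analytic functions, and then to deduce such nonexistence from the hypothesis by reproducing the construction in the proof of Theorem~\ref{thm-tz}. By Remark~\ref{rem-solspace} we may assume $\mathcal L=\mathcal L_\tau$ for some $\tau\in[0,1)$. The bianalytic case $\tau=0$ I would dispatch directly: by hypothesis $(z-z_0)^{-1}|_{\partial G}\in C(\partial G)=A_{\overline\partial^2}(\partial G,\overline{G})$, so there is a function $F_1$, $\overline\partial^2$-analytic in a neighbourhood of $\overline G$, with $\|F_1-(z-z_0)^{-1}\|_{\partial G}<\varepsilon$; then $g:=1-(z-z_0)F_1$ is again $\overline\partial^2$-analytic near $\overline G$ (the product of a bianalytic function with a holomorphic one is bianalytic), $g(z_0)=1$, and $\|g\|_{\partial G}\le\varepsilon\operatorname{diam}\overline G$. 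Adding a rescaled such $g$ to an $\overline\partial^2$-analytic approximant of a prescribed boundary function adjusts the value at $z_0$ independently, which gives $A_{\overline\partial^2}(Y,\overline G)=C(Y)$. From now on $\tau\in(0,1)$.

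Since $A_{\mathcal L}(Y,\overline G)$ is a closed subspace of $C(Y)$, by Hahn--Banach it is enough to show that a finite complex Borel measure $\mu=\mu'+c\,\delta_{z_0}$ on $Y$ (with $\mu'$ a measure on $\partial G$, $c\in\mathbb C$) annihilating every function that is $\mathcal L_\tau$-analytic in a neighbourhood of $\overline G$ must vanish. By \eqref{eq-soltau-nse} each such function is $g(z)+h(z_\tau)$ with $g$ holomorphic near $\overline G$ and $h$ holomorphic near $T\overline G$, and every such sum is $\mathcal L_\tau$-analytic; letting $g$ and $h$ vary independently and using Runge's theorem (both $\overline G$ and $\overline{TG}$ have connected complements, $G$ and $TG$ being Carath\'eodory domains), the annihilation condition is equivalent to $\mu\perp R(\overline G)$ together with $T_*\mu\perp R(\overline{TG})$, where $T_*\mu$ is the push-forward, $(T_*\mu)(E)=\mu(T^{-1}E)$. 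If $c=0$, then $\mu=\mu'$ is orthogonal to the restrictions to $\partial G$ of all functions $\mathcal L_\tau$-analytic near $\overline G$, hence to $A_{\mathcal L}(\partial G,\overline G)=C(\partial G)$, so $\mu'=0$. Thus everything reduces to the claim that \emph{there is no measure $\rho$ on $\partial G$ with $\int f\,d\rho=f(z_0)$ for every $f$ that is $\mathcal L_\tau$-analytic in a neighbourhood of $\overline G$} (for $\mu$ with $c\neq0$ one would take $\rho=-\mu'/c$); equivalently, the weak maximum modulus inequality $|f(z_0)|\le C\|f\|_{\partial G}$ must fail on this class of functions. It is here that $\mathcal L\in\NSE$ is essential, the corresponding statement being false already for $\Delta$.

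To rule out such a $\rho$ I would argue by contradiction, following the proof of Theorem~\ref{thm-tz}. Let $\omega_{z_0}$ be the harmonic measure of $G$ at $z_0$, and $\theta=(T^{-1})_*\omega$, where $\omega$ is the harmonic measure of $TG$ at $Tz_0$. Testing the condition on $\rho$ against holomorphic $g$ near $\overline G$ gives $\rho-\omega_{z_0}\perp R(\overline G)$, and testing against functions $h(z_\tau)$ with $h$ holomorphic near $T\overline G$ gives $T_*(\rho-\theta)\perp R(\overline{TG})$. Feeding these two measures into the description of the annihilator of $R$ on a Carath\'eodory domain (\cite{CarFed2005otaa}, Theorem~2) produces $H^1(\mathbb D)$-densities, whose primitives (continuous on $\overline{\mathbb D}$, absolutely continuous on $\mathbb T$) yield functions $F\in\AC(G)$ and $F_\tau\in\AC(TG)$; the ``positive triplet'' argument of Theorem~\ref{thm-tz}, together with $\partial_a TG=T(\partial_a G)$ and the fact that $T$ is sense-preserving, then identifies $F(\zeta)-F_\tau(T\zeta)$, for $\zeta\in\partial_a G$, with the boundary value of an \emph{explicit} $\mathcal L_\tau$-analytic function assembled from the Cauchy and harmonic-measure kernels at $z_0$ (these being the contributions of $\omega_{z_0}$ and $\theta$). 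Subtracting that explicit function, which leaves one inside $\AC(G)$ and $\AC(TG)$ respectively, yields $\widetilde F(\zeta)=\widetilde F_\tau(T\zeta)$ on $\partial_a G$, i.e. $G$ is an $\mathcal L$-special domain, contradicting the hypothesis by the first part of Theorem~\ref{thm-tz}. Once no such $\rho$ exists, every annihilating $\mu$ has $c=0$, hence $\mu=0$, and $A_{\mathcal L}(Y,\overline G)=C(Y)$.

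The hard part is the last identification. Because the real-linear map $T\colon z\mapsto z_\tau$ is not conformal, it does not carry harmonic measure to harmonic measure, so the correction measures $\omega_{z_0}$ and $\theta$ do not correspond under $T$; one must control the defect $\omega-T_*\omega_{z_0}$ on $\partial(TG)$ and verify that it feeds only an $\AC$-admissible (in fact explicitly $\mathcal L_\tau$-analytic) discrepancy into $F-F_\tau\circ T$, so that the $\mathcal L$-special structure really survives. Alongside this, the argument needs the customary care with the Fatou sets of $\varphi$ and $\varphi_\tau$, their Borel extensions, and the accessible boundary $\partial_a G$, since a Carath\'eodory domain need not be a Jordan domain — exactly the apparatus already developed for the proof of Theorem~\ref{thm-tz}, which is why I would model the whole argument on it.
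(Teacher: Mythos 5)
Your overall skeleton is the same as the paper's: dualize, observe that an annihilating measure must charge $z_0$ (else the hypothesis kills it), trade the point mass $\delta_{z_0}$ for harmonic measure at $z_0$ on the $G$-side and at $Tz_0$ on the $TG$-side, feed the resulting measures on $\partial G$ and $\partial(TG)$ into \cite{CarFed2005otaa}, Theorem~2, take primitives $\varPhi,\varPhi_\tau$, and transfer along $\partial_aG$ via the positive-triplet identity \eqref{eq-x1} to contradict the non-$\mathcal L$-specialness guaranteed by Theorem~\ref{thm-tz}. (Your separate treatment of $\tau=0$ is fine, and your ``equivalence via Runge'' remark is unnecessary: only the trivial direction of it is needed, which is fortunate, since for a general Carath\'eodory domain one should not rely on $\mathbb C\setminus\overline G$ being connected, nor on the decomposition \eqref{eq-soltau-nse} holding globally on a possibly non simply connected neighborhood of $\overline G$.) The genuine gap is at the decisive step, which you leave as a claim: that $F(\zeta)-F_\tau(T\zeta)$ on $\partial_aG$ equals the boundary value of an \emph{explicit $\mathcal L_\tau$-analytic} function which can be additively subtracted ``staying inside $\AC(G)$ and $\AC(TG)$ respectively''.

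That claim is not justified and, as stated, does not work. After the harmonic-measure corrections the densities produced by \cite{CarFed2005otaa}, Theorem~2, are $b\circ\psi+\eta_0$ and $b_\tau\circ\psi_\tau+\eta_0$ (constant shifts), and running the arc-matching argument with $\varphi(0)=z_0$, $\varphi_\tau(0)=Tz_0$ gives, for $\zeta=\varphi(\xi)\in\partial_aG$, $\xi=e^{i\vartheta}$, $\xi'=e^{i\vartheta'}=\psi_\tau(T\zeta)$, the relation $\varPhi(\xi)+c\vartheta=\varPhi_\tau(\xi')+c\vartheta'$; i.e.\ the discrepancy $F(\zeta)-F_\tau(T\zeta)$ is a constant multiple of $\arg\psi_\tau(T\zeta)-\arg\psi(\zeta)$. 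This is not the trace of an $\mathcal L_\tau$-analytic function (imaginary parts of holomorphic functions of $z_\tau$ are not $\mathcal L_\tau$-analytic, since $\overline\partial\partial_\tau\,\overline{h(z_\tau)}=(1-\tau^2)\overline{h''(z_\tau)}$), and, more to the point, it does not split additively into a term in $\AC(G)$ plus a term in $\AC(TG)$: $\arg\psi$ and $\arg\psi_\tau$ are multivalued (discontinuous on the boundary), so subtraction cannot produce the pair required by Definition~\ref{dfn-lspec}. The missing idea in the paper is multiplicative: because the argument terms enter with the \emph{same} coefficient on both sides, one exponentiates the identity and obtains $\psi(z)\,e^{F(z)}=\psi_\tau(z_\tau)\,e^{F_\tau(z_\tau)}$ for $z\in\partial_aG$, where $\psi\,e^{F}\in\AC(G)$ and $\psi_\tau\,e^{F_\tau}\in\AC(TG)$ are non-constant; this is what exhibits $G$ as $\mathcal L_\tau$-special and yields the contradiction. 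Without this (or an equivalent) device, your ``control the defect $\omega-T_*\omega_{z_0}$'' step is exactly the unresolved crux, not a routine verification.
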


This lemma is a particular case of \cite{Zai2006psim}, Theorem~3. Moreover,
according to Theorem~\ref{thm-bgp+z} this lemma yields the following more
general result (see \cite{Zai2006psim}, Theorem~3): if $X$ is a
Carath\'eodory compact set with disconnected complement and such that
$P_{\mathcal L}(X)=C_{\mathcal L}(X)$. Then for every point $z_0$ belonging
to any bounded connected component of $\mathbb C\setminus X$ it holds
$P_{\mathcal L}(X\cup\{z_0\})=C_{\mathcal L}(X\cup\{z_0\})$.

\begin{proof}[Proof of Lemma~\ref{pro-Zaitsev}.]
The proof of this lemma can be extract from the proof of \cite{Zai2006psim},
Theorem~3, but we present here a new fairly short proof, which can be
regarded as a somewhat modified version of the proof given in
\cite{Zai2006psim}. From the very beginning we assume that $\mathcal L$ is
already reduced to the form $\mathcal L_\tau$, $\tau\in(0,1)$. In what
follows we will use the same system of notations that in the proof of
Theorem~\ref{thm-tz}.

Arguing by contradiction, let us suppose that
$A_\tau(Y,\overline{G})=A_{\mathcal L_\tau}(Y,\overline{G})\neq C(Y)$. It
means that there exists a finite complex-valued Borel measure $\eta$ on $Y$
that is orthogonal to $A_{\tau}(Y,\overline{G})$, so that $\eta$ is
orthogonal to $R(\overline{G})$ and $\eta$ is orthogonal to
$\{h(z_\tau)\colon h\in R(T\overline{G})\}$. Notice, that the measure $\eta$
supported on $\partial G\cup\{z_0\}$ and $\eta_0=\eta(\{z_0\})\neq0$,
otherwise $\eta$ is the measure on $\partial G$ which is impossible since
$A_{\mathcal L}(\partial G,\overline{G})=C(\partial G)$.

Let $\varphi$ be some conformal mapping from $\mathbb D$ onto $G$ such that
$\varphi(0)=z_0$ and let $\omega_0=\varphi(d\xi/(2\pi))$, that is $\omega_0$
is the harmonic measure on $\partial G$ evaluated with respect to $G$ and
$z_0$. It is easy to check that the measure
$\eta_0\omega_0-\eta_0\delta_{z_0}$, where $\delta_{z_0}$ is the unit point
mass measure supported at the point $z_0$, is orthogonal to
$R(\overline{G})$. Next, let $\mu=\eta|_{\partial G}$. Therefore, the measure
$\eta+\eta_0\omega_0-\eta_0\delta_{z_0}=\mu+\eta_0\omega_0$ is also
orthogonal to $R(\overline{G})$. But this measure is supported on $\partial
G$ and hence $\mu+\eta_0\omega_0=(b\circ\psi)\,\omega_0$, where $b\in
H^1(\mathbb D)$, $b(0)=0$, and $\psi=\varphi^{-1}$. Finally, we have
$\mu=(b\circ\psi+\eta_0)\,\omega_0$. Let $\varPhi$ be some primitive of $b$
in $\mathbb D$. As it was mentioned in the proof of Theorem~\ref{thm-tz}
above, $\varPhi\in C(\overline{\mathbb D})$ and $\varPhi$ is absolutely
continuous on $\mathbb T$.

Let $\eta_\tau$ be the measure defined as follows
$\eta_\tau(E)=\eta(T^{-1}E)$. Using this measure and taking into account the
fact that it is orthogonal to $R(\overline{G_\tau})$ we can find $b_\tau\in
H^1(\mathbb D)$ such that the measure $\mu_\tau=\eta_\tau|_{\partial TG}$ has
the form $\mu_\tau=(b_\tau\circ\psi_\tau+\eta_0)\,\omega_{\tau,0}$, where
$\varphi_\tau$ is some conformal map from $\mathbb D$ onto $G_\tau$ such that
$\varphi_\tau(0)=Tz_0$, $\psi_\tau=\varphi_\tau^{-1}$, and
$\omega_{\tau,0}\varphi_\tau(d\xi/(2\pi))$ is the harmonic measure on
$\partial G_\tau$ evaluated with respect to $G_\tau$ and $Tz_0$. Let
$\varPhi_\tau$ be some primitive for $b_\tau$ in $\mathbb T$ such that
$\varPhi_\tau(1)=\varPhi(1)$.

Using \eqref{eq-x1} by the same way as in the proof of Theorem~\ref{thm-tz}
we obtain, that if $\xi=e^{i\vartheta}$ and
$\xi'=e^{i\vartheta'}=\psi_\tau(T\varphi(\xi))$, then
$$
\varPhi(e^{i\vartheta})+\frac{\eta_0}{2\pi{}i}i\vartheta=
\varPhi_\tau(e^{i\vartheta'})+\frac{\eta_0}{2\pi{}i}i\vartheta',
$$
which gives $\xi\exp(\varPhi(\xi))=\xi'\exp(\varPhi_\tau(\xi'))$. Putting
$F(z)=\varPhi(\psi(z))$, $z\in G$ and $F_\tau(z)=\varPhi_\tau(\psi_\tau(z))$,
$z\in G_\tau$ we have $F\in\AC(G)$, $F_\tau\in\AC(G_\tau)$ and
$$
\psi(z)e^{F(z)}=\psi_\tau(z_\tau)e^{F_\tau(z_\tau)},\quad z\in\partial_aG.
$$
It remains to observe that $\psi\,e^F\in\AC(G)$ and
$\psi_\tau\,e^{F_\tau}\in\AC(G_\tau)$, and both these functions are
non-constant. Therefore $G$ is $\mathcal L$-special domain and hence
$A_\tau(\partial G,\overline{G})\neq C(\partial G)$ which is a contradiction.
Therefore, $A_\tau(Y,\overline{G})=C(Y)$, as it is demanded.
\end{proof}

Using this lemma we are able to state the following result which was firstly
obtained in \cite{Zai2006psim}, Corollary~4 and which says that the weak
maximum modulus principle is lack for every operator $\mathcal L\in\NSE$ in
every Carath\'eodory domain.

\begin{theorem}
Let $G$ be a Carath\'eodory domain in $\mathbb C$, and let $\mathcal
L\in\NSE$. Then $G$ does not satisfy the weak maximum modulus principle for
$\mathcal L$.
\end{theorem}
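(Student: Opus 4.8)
The plan is to argue by contradiction: suppose $G$ satisfies the weak maximum modulus principle for $\mathcal L$ with a constant $C=C(G,\mathcal L)$. By Remark~\ref{rem-solspace} the real-linear change of variables reducing $\mathcal L$ to $\mathcal L_\tau$, $\tau\in[0,1)$, carries the weak maximum modulus inequality for $G$ to the same inequality for $TG$, so I may assume $\mathcal L=\mathcal L_\tau$. The first observation is that the weak principle forces the uniqueness property: if $f_1,f_2\in C(\overline G)\cap\mathcal O_\tau(G)$ agree on $\partial G$ then $\|f_1-f_2\|_{\overline G}\leqslant C\|f_1-f_2\|_{\partial G}=0$. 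Consequently, for any fixed $z_0\in G$ the interior point evaluation $\Lambda\colon g|_{\partial G}\mapsto g(z_0)$ is a well-defined linear functional on $E=\{g|_{\partial G}\colon g\in\mathcal O_\tau(U)\text{ for some neighborhood }U\text{ of }\overline G\}$, and $|\Lambda(g|_{\partial G})|=|g(z_0)|\leqslant\|g\|_{\overline G}\leqslant C\|g\|_{\partial G}$, so $\Lambda$ is bounded, with norm $\leqslant C$, in the uniform norm on $\partial G$. Recall that $\overline E=A_{\mathcal L_\tau}(\partial G,\overline G)$.

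The heart of the argument is the case $A_{\mathcal L_\tau}(\partial G,\overline G)=C(\partial G)$, i.e. (by part~1 of Theorem~\ref{thm-tz}, for $\tau>0$) the case where $G$ is not $\mathcal L_\tau$-special. Then $E$ is dense in $C(\partial G)$, so $\Lambda$ extends to a bounded functional on $C(\partial G)$, represented by a finite complex measure $\nu$ on $\partial G$ with $\|\nu\|\leqslant C$ and $\int_{\partial G}g\,d\nu=g(z_0)$ for every $g$ that is $\mathcal L_\tau$-analytic in a neighborhood of $\overline G$. Now I invoke Lemma~\ref{pro-Zaitsev}: since $A_{\mathcal L_\tau}(\partial G,\overline G)=C(\partial G)$, we get $A_{\mathcal L_\tau}(Y,\overline G)=C(Y)$ for $Y=\partial G\cup\{z_0\}$. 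As $z_0$ is an isolated point of $Y$, the function $\chi\in C(Y)$ equal to $0$ on $\partial G$ and to $1$ at $z_0$ is continuous on $Y$; hence there are $g_k$, $\mathcal L_\tau$-analytic in neighborhoods of $\overline G$, with $\|g_k\|_{\partial G}\to0$ and $g_k(z_0)\to1$. But $g_k(z_0)=\int_{\partial G}g_k\,d\nu$, so $|g_k(z_0)|\leqslant C\|g_k\|_{\partial G}\to0$, contradicting $g_k(z_0)\to1$. This already excludes the weak maximum modulus principle in this case.

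It remains to handle the complementary case, in which $A_{\mathcal L_\tau}(\partial G,\overline G)\neq C(\partial G)$ --- that is, $G$ is $\mathcal L_\tau$-special when $\tau>0$, respectively a Nevanlinna domain when $\tau=0$ (cf. Theorem~\ref{thm-cfp}). Here Lemma~\ref{pro-Zaitsev} is unavailable, and instead I would produce a \emph{non-trivial} solution with trivial boundary data, contradicting the uniqueness property noted above. For $\tau>0$, Definition~\ref{dfn-lspec} together with the constructions in the proof of Theorem~\ref{thm-tz} supplies non-constant $F\in\AC(G)$ and $F_\tau\in\AC(TG)$ with $F(\zeta)=F_\tau(\zeta_\tau)$ for every $\zeta\in\partial_aG$; by \eqref{eq-soltau-nse} the bounded function $f(z):=F(z)-F_\tau(z_\tau)$ is $\mathcal L_\tau$-analytic and non-constant in $G$, and its boundary values along paths ending at accessible boundary points vanish. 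For $\tau=0$, the Nevanlinna condition gives bounded holomorphic $u,v$ in $G$ with $\overline z=u/v$ in the boundary sense, and $f(z):=\overline z\,v(z)-u(z)$ is a bounded, non-constant bianalytic function in $G$ with vanishing boundary values along accessible paths. Since accessible boundary points are dense in $\partial G$, for a Jordan domain $G$ we obtain $f\in C(\overline G)\cap\mathcal O(G,\mathcal L)$ with $f|_{\partial G}\equiv0$ and $f\not\equiv0$, contradicting uniqueness; for a non-Jordan Carath\'eodory domain one needs an additional approximation/exhaustion step to pass from angular boundary values to genuine continuity on $\overline G$ (or one cites directly \cite{Zai2006psim}). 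In every case the weak maximum modulus principle for $\mathcal L$ fails for $G$.

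I expect the main obstacle to be exactly the tension in the principal case between the weak maximum modulus inequality and the approximation freedom granted by Lemma~\ref{pro-Zaitsev}: the lemma lets one approximate on $\partial G\cup\{z_0\}$ the datum that is $0$ on $\partial G$ and $1$ at $z_0$, which is precisely the behaviour a weak maximum modulus bound forbids for functions $\mathcal L$-analytic near $\overline G$; reconciling the two forces the Hahn--Banach representation of the interior evaluation by a boundary measure together with the $\mathcal L_\tau$-special / Nevanlinna dichotomy. The genuinely delicate residual point is this complementary case for non-Jordan Carath\'eodory domains, where one must control the boundary behaviour of $\AC$-type functions carefully enough to exhibit a \emph{continuous} non-trivial solution vanishing on $\partial G$.
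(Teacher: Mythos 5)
Your overall route is the same as the paper's: reduce to $\mathcal L_\tau$, split according to whether $A_{\mathcal L_\tau}(\partial G,\overline{G})=C(\partial G)$, use Lemma~\ref{pro-Zaitsev} in the first case, and in the second case convert the $\mathcal L_\tau$-special (resp. Nevanlinna) data into a nontrivial solution with zero boundary values. Your first case is correct, though the Hahn--Banach measure is superfluous: the approximants $g_k$ are themselves in $C(\overline{G})\cap\mathcal O_\tau(G)$, so the weak principle gives directly $|g_k(z_0)|\leqslant C\|g_k\|_{\partial G}\to0$ against $g_k(z_0)\to1$.

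The genuine gap is in the second case, and it has two parts. First, for $\tau>0$ you produce a function of $C(\overline{G})$ vanishing on $\partial G$ only when $G$ is Jordan, while the theorem concerns arbitrary Carath\'eodory domains; your proposed repair (an ``approximation/exhaustion step'', or citing \cite{Zai2006psim}, which is circular since the theorem \emph{is} that corollary) is not what is needed. The paper's argument works on the disk instead: since $T$ is a homeomorphism of the whole plane, the map $\psi_\tau\circ T\circ\varphi$ extends to a homeomorphism of $\overline{\mathbb D}$ (it respects prime ends), so both $F\circ\varphi$ and $F_\tau\circ T\circ\varphi=(F_\tau\circ\varphi_\tau)\circ(\psi_\tau\circ T\circ\varphi)$ are continuous on $\overline{\mathbb D}$; they coincide on the dense set of Fatou points corresponding to $\partial_aG$, hence everywhere on $\mathbb T$, so $f\circ\varphi\in C(\overline{\mathbb D})$ vanishes on $\mathbb T$. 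Since $G$ is a Carath\'eodory domain, $z\to\partial G$ forces $\psi(z)\to\mathbb T$, so $f$ itself lies in $C(\overline{G})$, vanishes on $\partial G$ and is nontrivial (constancy of $F-F_\tau\circ T$ would force $F,F_\tau$ constant), which kills the weak principle with no exhaustion and with no restriction to Jordan domains. Second, your $\tau=0$ construction does not work even for Jordan domains: with $u,v\in H^\infty(G)$ the function $\overline{z}\,v(z)-u(z)$ is bounded and bianalytic, but its boundary values vanish only almost everywhere in the nontangential sense, and it need not extend continuously to $\overline{G}$; hence it does not violate the weak maximum modulus principle as defined. Note that the paper's own sketch treats only $\tau\in(0,1)$ and refers the bianalytic case to \cite{Zai2006psim}; covering $\tau=0$ requires the analogous argument built on the $\overline\partial^2$-criterion of \cite{CarParFed2002sbm} (continuity being recovered through $\AC$-type primitives of the annihilating measure, as in the proof of Theorem~\ref{thm-tz}), not the raw Nevanlinna quotient.
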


We are not going to give a detailed proof of this theorem, but we present
here a more or less complete scheme of the proof for the sake of completeness
and for the reader convenience. As previously let us assume that $\mathcal
L=\mathcal L_\tau$, $\tau\in(0,1)$. Let $G$ is such that $A_\tau(\partial
G,\overline{G})=C(\partial G)$. Then, according to Lemma~\ref{pro-Zaitsev},
the function that equals to $1$ at the given point $z_0\in G$ and that
vanishes in $\partial G$ can be approximated uniformly on $G\cup\{z_0\}$ (and
hence on $\partial G$) with an arbitrary accuracy by function $\mathcal
L_\tau$-analytic in a neighborhood of $\overline{G}$. This function gives a
clear contradiction with the weak maximum modulus principle for $\mathcal
L_\tau$-analytic functions in $G$. Next, let $G$ is such that
$A_\tau(\partial G,\overline{G})\neq C(\partial G)$. It means that $G$ is a
$\mathcal L_\tau$-special domain. By definition it means that there exists
two non-constant functions $F\in\AC(G)$ and $F_\tau\in \AC(TG)$ such that
$F(z)=F_\tau(z_\tau)$ for every $z\in\partial_aG$. Let $\varphi$ be some
conformal mapping from $\mathbb D$ onto $G$. It is not difficult to show that
the function $F(\varphi(w))-F_\tau(\varphi(w)-\tau\overline{\varphi(w)})$,
$w\in\mathbb D$, is extended continuously to $\overline{\mathbb D}$ and
vanishes everywhere on $\mathbb T$. Therefore, the function
$F(z')-F_\tau(z'_\tau)\to0$ when $z'\in G$ tends to an arbitrary point
$z\in\partial_aG$. Therefore, the uniqueness property for the $\mathcal
L$-Dirichlet problem in $G$ fails and hence, the weak maximum modulus
principle is also fails for $\mathcal L$ in $G$.

\end{document}